\newtheorem{thm}{Theorem}
\newtheorem{main theorem}[thm]{Main Theorem}
\newtheorem{corollary}[thm]{Corollary}
\newtheorem{lemma}[thm]{Lemma}
\newtheorem{prop}[thm]{Proposition}
\newtheorem{conjecture}[thm]{Conjecture}
\theoremstyle{definition}
\newtheorem{defn}[thm]{Definition}
\newtheorem{remark}[thm]{Remark}
\newcommand{\bea}{\begin{eqnarray*}}
\newcommand{\eea}{\end{eqnarray*}}
\newcommand{\be}{\begin{equation}}
\newcommand{\ee}{\end{equation}}
\title[Adaptive trains]{Adaptive trains for attracting sequences of holomorphic automorphisms}
\author[Peters, Smit]{Han Peters, Iris Marjan Smit}
\address{KdV Institute for Mathematics\\
University of Amsterdam\\
The Netherlands}
\email{h.peters@uva.nl}
\email{imsmit@gmail.com}
\begin{document}
\bibliographystyle{plain}

\begin{abstract}
Consider a holomorphic automorphism acting hyperbolically on an invariant compact set. It has been conjectured that the arising stable manifolds are all biholomorphic to Euclidean space. Such a stable manifold is always equivalent to the basin of a uniformly attracting sequence of maps. The equivalence of such basins to Euclideans has been shown under various additional assumptions. Recently Majer and Abbondandolo achieved new results by non-autonomously conjugating to normal forms on larger and larger time intervals. We show here that their results can be improved by adapting these time intervals to the sequence of maps. Under the additional assumption that all maps have linear diagonal part the adaptation is quite natural and quickly leads to significant improvements. We show how this construction can be emulated in the non-diagonal setting.
\end{abstract}

\maketitle

\tableofcontents

\section{Introduction}

Let $X$ be a complex manifold equipped with a Riemannian metric, and let $f:X \rightarrow X$ be an automorphism that acts hyperbolically on some invariant compact subset $K \subset X$. Let $p \in K$ and write $\Sigma_f^s(p)$ for the stable manifold of $f$ through $p$. $\Sigma_f^s(p)$ is a complex manifold, say of complex dimension $m$. In the special case where $p$ is a fixed point it is known that $\Sigma_f^s(p)$ is biholomorphically equivalent to $\mathbb C^m$. It was conjectured by Bedford \cite{Bedford} that this equivalence holds for any $p \in K$.

\begin{conjecture}[Bedford] \label{conj:stable}
The stable manifold $\Sigma_f^s(p)$ is always equivalent to $\mathbb C^m$.
\end{conjecture}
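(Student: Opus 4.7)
The plan is to reduce Conjecture \ref{conj:stable} to a question about non-autonomous basins, and then to attack that question by conjugating the contracting sequence to a normal form via adaptively chosen time intervals. Concretely, by a Pesin-style construction one can cover the stable manifold $\Sigma_f^s(p)$ by charts along the negative orbit of $p$, in which the restrictions $f\big|_{\Sigma_f^s(f^n(p))}$ are represented by germs $f_n$ at $0 \in \mathbb C^m$ whose differentials $D f_n(0)$ have operator norm bounded by a uniform constant $\lambda < 1$. The stable manifold is then biholomorphic to the basin
\[
\Omega(f_\bullet) = \bigl\{ z \in \mathbb C^m : f_n \circ \cdots \circ f_1(z) \to 0 \bigr\},
\]
so it suffices to show $\Omega(f_\bullet) \cong \mathbb C^m$ for every uniformly contracting sequence of germs.

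To construct such a biholomorphism I would iteratively conjugate longer and longer compositions $F_n := f_n \circ \cdots \circ f_1$ to polynomial normal forms $P_n$ on increasingly large balls. At each stage one solves a cohomological equation that eliminates all non-resonant monomials up to a fixed order $N$; the domain of validity of the resulting conjugacy $\varphi_n$ is controlled by the expansion at which $P_n$ dominates the tail. Passing to the inverse limit of the maps $\varphi_n \circ F_n$, with suitable uniform estimates, produces a well-defined injective holomorphic map $\Omega(f_\bullet) \to \mathbb C^m$ whose image is all of $\mathbb C^m$ because the domains of $\varphi_n^{-1}$ exhaust the target.

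The central difficulty, and precisely the point where Majer and Abbondandolo's technique can be sharpened, is the choice of the time intervals $[n_k, n_{k+1}]$ on which each conjugation step is performed. The usable interval length depends on the precise spectral gaps and on how well the differentials $D F_n(0)$ can be simultaneously normalized; if one uses a single fixed length tuned to the worst case, the conjugacies fail to cover all of $\mathbb C^m$ in the limit. My plan is therefore to let $n_{k+1}-n_k$ depend on the actual contraction and resonance data of the piece $f_{n_k+1}, \ldots, f_{n_{k+1}}$, so that the error terms decay fast enough in a data-driven way rather than a worst-case way.

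I expect the main obstacle to be that this adaptive procedure is natural only when each $D f_n(0)$ is diagonal, for then the relevant spectral data are literally the eigenvalues and the cohomological equation decouples monomial by monomial. In the general non-diagonal setting the off-diagonal entries interact nontrivially with the normalization, and the resonant set is no longer intrinsic; one must introduce a surrogate decomposition that plays the role of the eigenvalues and verify that the adaptive estimates survive the extra error terms. Making this emulation precise, and showing that it still yields a basin biholomorphic to $\mathbb C^m$ for arbitrary uniformly contracting sequences, is where I expect the hard technical work to lie.
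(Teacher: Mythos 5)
Conjecture \ref{conj:stable} is an open problem. The paper does not prove it, and neither does your proposal. What the paper actually establishes are Theorems \ref{thm:main} and \ref{thm:general}, which give affirmative answers to the closely related Conjecture \ref{conj:main} only under additional pinching hypotheses on the contraction constants ($D^{k+1}<C$ with diagonal linear parts, or $D^{11/5}<C$ in general). Your sketch correctly captures the overall strategy the paper (and its predecessors) pursue: reduce via Forn{\ae}ss--Stens{\o}nes to the non-autonomous basin problem, then conjugate to triangular normal forms on time intervals chosen adaptively from the linear data (the ``trains'' of Abbondandolo--Majer, refined here by letting the train boundaries depend on the sequence). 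But, as you yourself say in your last paragraph, the hard work of making this yield $\mathbb C^m$ for \emph{arbitrary} uniformly contracting sequences is precisely what has not been done. There is no known way to choose the intervals so that the errors from the normal-form conjugations sum up without a spectral gap hypothesis on $C$ and $D$; the paper's trains cease to help once $D^{k+1}\ge C$ (respectively $D^{11/5}\ge C$), and this obstruction is genuine, not merely notational.

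A smaller but substantive inaccuracy: the reduction from stable manifolds to non-autonomous basins used here is not Pesin-theoretic. Pesin-style charts and Oseledec/Lyapunov regularity underlie the generic results of Jonsson--Varolin (Theorem \ref{thm:JV}) and Abate--Abbondandolo--Majer (Theorem \ref{thm:Lyapunov}), but those only settle Conjecture \ref{conj:stable} for a full-measure set of points, not all of $K$. The general reduction the paper relies on is the Forn{\ae}ss--Stens{\o}nes abstract basin (tail space) construction, namely Lemma \ref{lemma:abstract} and Theorem \ref{thm:FS}, which needs only the two-sided uniform bounds $C\|z\|\le\|f_n(z)\|\le D\|z\|$ and no measure-theoretic regularity whatsoever. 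Note also that you only record the upper bound (operator norm $\le \lambda < 1$); the lower bound $C>0$ is essential throughout the paper's arguments (it is what makes Forn{\ae}ss's ``short $\mathbb C^2$'' example, Theorem \ref{thm:short}, fail to be a counterexample). Appealing to Pesin charts would, if anything, confine you to the generic case that is already settled.
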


The usual approach towards this problem is to translate it to a non-autonomous setting. Let $f_0, f_1, \ldots$ be a sequence of automorphisms of $\mathbb{C}^m$ satisfying
\begin{equation}\label{eq:uniform}
C\|z\| \le \|f_n(z)\| \le D\|z\|,
\end{equation}
for all $n \in \mathbb{N}$ and all $z$ lying in the unit ball $\mathbb{B}$, where the constants $1>D>C>0$ are independent of $n$. We will call a sequence $(f_n)$ satisfying condition \eqref{eq:uniform} \emph{uniformly attracting}. We define the basin of attraction of the sequence $(f_n)$ as
\begin{equation}
\Omega = \Omega_{(f_n)} = \{z \in \mathbb{C}^k \mid f_n \circ \cdots \circ f_0(z) \rightarrow 0\}.
\end{equation}

It was shown by Forn{\ae}ss and Stens{\o}nes that a positive answer to the following conjecture implies a positive answer to Conjecture \ref{conj:stable}.

\begin{conjecture} \label{conj:main}
The basin $\Omega$ is always biholomorphic to $\mathbb{C}^m$.
\end{conjecture}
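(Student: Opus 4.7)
The plan is to construct the desired biholomorphism $\Phi:\Omega \to \mathbb{C}^m$ as a limit of non-autonomous partial conjugations, in the spirit of Forn{\ae}ss--Stens{\o}nes and Abbondandolo--Majer, but with the lengths of the conjugation blocks chosen adaptively from the sequence $(f_n)$. Write $F_{[j,k]} := f_{k-1} \circ \cdots \circ f_j$. I would split $\mathbb{N}$ into successive blocks $[N_k, N_{k+1})$ and, on each block, seek a polynomial map $P_k$ of Poincar\'e--Dulac type together with local biholomorphisms $\Psi_k$ tangent to the identity at $0$ such that $\Psi_{k+1}^{-1} \circ F_{[N_k, N_{k+1}]} \circ \Psi_k$ agrees with $P_k$ up to a prescribed error on a prescribed ball. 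Composing the $\Psi_k$'s along the sequence then produces candidate biholomorphisms $\Phi_k$ from growing subsets $U_k \subset \Omega$ into $\mathbb{C}^m$, and the problem reduces to showing convergence and surjectivity onto $\mathbb{C}^m$.

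The adaptive step is the heart of the matter. Suppose first that every $Df_n(0)$ is diagonal and that the eigenvalues avoid resonances. Then a formal conjugation exists and can be truncated; the residual non-polynomial error after truncating at degree $d$ decays like the contraction rate raised to a power growing with $d \cdot (N_{k+1}-N_k)$. Rather than fixing the block lengths in advance, I would choose $N_{k+1}$, given $N_k$ and the conjugations constructed so far, as the smallest integer making this error small enough to be absorbed into the next stage. This tunes the blocks to the actual behavior of $(f_n)$ instead of to a worst-case uniform estimate. The induction then maintains quantitative control of $\Phi_k = \Psi_0 \circ \cdots \circ \Psi_k$ on an exhausting sequence $U_k$, with $\Phi_{k+1}$ differing from $\Phi_k$ by a sufficiently small perturbation for the limit to exist as a biholomorphism onto $\mathbb{C}^m$.

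The main obstacle is the non-diagonal setting. There the linear parts $Df_n(0)$ carry Jordan structure which may vary with $n$ and need not be simultaneously triangularizable, so no clean polynomial normal form is available for a single block $F_{[N_k,N_{k+1}]}$. The model $P_k$ must be built from the slowly varying linear data along the block rather than from a single matrix. The point of adaptivity is precisely that choosing the block lengths in response to $(f_n)$ lets the filtration induced by the linear parts on $[N_k,N_{k+1})$ behave as if it were constant, up to a controlled drift. I expect most of the work to go into emulating the diagonal polynomial normal form in this non-autonomous filtered sense and into proving that the error estimates survive the non-commutativity of the linear parts; once that is in place, the limit-of-conjugations argument from the diagonal setting should apply essentially unchanged, and the result follows under the corresponding hypotheses on the spectrum.
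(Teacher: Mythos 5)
You are attempting to prove what is, in this paper, an open conjecture: the statement labeled Conjecture~\ref{conj:main} is not proved here, and the authors explicitly present only partial results toward it (Theorems~\ref{thm:main} and~\ref{thm:general}) under additional bunching hypotheses. Your sketch is honest in ending with ``under the corresponding hypotheses on the spectrum,'' but that caveat is precisely what prevents it from being a proof of the conjecture as stated, which asserts the conclusion for \emph{every} uniformly attracting sequence with $0 < C < D < 1$. There is no known way to make the limit-of-conjugations argument converge without a hypothesis of the form $D^\alpha < C$, and the Forn{\ae}ss ``short $\mathbb{C}^2$'' phenomenon (Theorem~\ref{thm:short}) shows that the convergence really is delicate: when the rate of contraction is allowed to degenerate, the basin can fail to be $\mathbb{C}^2$. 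Your plan does not explain why, for arbitrary $C$ and $D$, the accumulated error over a block can be absorbed; in general it cannot.

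Beyond that headline gap, your description of the adaptive step is not quite what makes the paper's construction work, and the difference matters. You propose choosing $N_{k+1}$ ``as the smallest integer making this error small enough to be absorbed,'' i.e., tying the block length to a decay-of-error criterion. The paper's trains are instead defined by a \emph{spectral} criterion on the linear parts: the interval $[p_j,q_j)$ ends as soon as the ratio of the accumulated diagonal entries $|a_{q_j,p_j}/b_{q_j,p_j}|^{\pm 1}$ (or, in the general case, the corresponding determinant/eigenvector quantity in Definition~\ref{def:trains}) crosses a threshold of size $D^{-k^j}$ (resp.\ $D^{2^j}$). This choice guarantees that within each train one coordinate direction is dominantly contracting, so that one can conjugate to lower/upper triangular polynomial maps on that train and control the off-diagonal growth, while also forcing $\sum |I_j|/k^j = \infty$ so that Lemma~\ref{lemma:sparse} applies. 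The error absorption is then a \emph{consequence} of the spectral estimates (Proposition~\ref{prop:trains}, Lemma~\ref{lemma:trein}, Theorem~\ref{thm:trein}), not the defining property of the blocks. In the non-diagonal case the further difficulty — which you gesture at but do not resolve — is that the coefficients of the conjugating maps $h_n$ and $g_n$ grow exponentially within a train; the paper controls this by proving they return to a fixed bound at the endpoints of each train (Corollary~\ref{cor:pj+1pj}) and then doing a compactness/inverse-orbit argument, and this bookkeeping is exactly what forces the stronger hypothesis $D^{11/5} < C$ instead of $D^{k+1} < C$. Your sketch does not supply this mechanism, so even granting bunching hypotheses it does not yet close into a proof of either theorem, let alone the conjecture.
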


Here we present the following new results, both giving positive answers to Conjecture \ref{conj:main} under additional hypotheses.

\begin{thm}\label{thm:main}
Let $(f_n)$ be a uniformly attracting sequence of automorphisms of $\mathbb C^2$, and suppose that the maps $f_n$ all have order of contact $k$. Assume further that the linear part of each map $f_n$ is diagonal and that
$$
D^{k+1} < C.
$$
Then the basin of attraction $\Omega_{(f_n)}$ is biholomorphic to $\mathbb C^2$.
\end{thm}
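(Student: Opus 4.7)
The plan is to construct the biholomorphism $\Phi: \Omega_{(f_n)} \to \mathbb{C}^2$ as a limit of adaptive normal-form conjugacies, in the spirit of Majer--Abbondandolo but with time intervals tuned to the actual sequence $(f_n)$ rather than chosen uniformly. Let $F_N = f_{N-1} \circ \cdots \circ f_0$ and $\Omega_N = F_N^{-1}(\mathbb{B})$, so that $\Omega_{(f_n)} = \bigcup_N \Omega_N$. I would partition $\mathbb{N}$ into consecutive blocks $I_i = [n_i, n_{i+1})$ whose lengths $|I_i|$ are chosen inductively from the sequence, and on each block conjugate the composition $g_i := f_{n_{i+1}-1} \circ \cdots \circ f_{n_i}$ by a polynomial map $H_i$ of degree $\leq k$ to its polynomial normal form modulo an error of order $\geq k+1$. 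The biholomorphism $\Phi$ is then obtained as the compositional limit of the normalized partial conjugacies, pulled back through $F_N$.

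Both hypotheses play a specific role in the normalization step. Since each $f_n$ has diagonal linear part, the block composition $g_i$ has a diagonal linear part $\Lambda_i = \mathrm{diag}(\lambda_{1,i}, \lambda_{2,i})$ with $C^{|I_i|} \leq |\lambda_{j,i}| \leq D^{|I_i|}$. For any multi-index $\alpha$ with $|\alpha| \geq k+1$, the hypothesis $D^{k+1} < C$ forces
\[
|\lambda_{1,i}^{\alpha_1} \lambda_{2,i}^{\alpha_2}| \leq D^{(k+1)|I_i|} < C^{|I_i|} \leq |\lambda_{j,i}|,
\]
so the denominator $\lambda_{j,i} - \lambda_{1,i}^{\alpha_1}\lambda_{2,i}^{\alpha_2}$ of the homological equation is bounded below by a definite fraction of $|\lambda_{j,i}|$. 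Hence every monomial of degree $\geq k+1$ can be eliminated by a polynomial conjugacy with well-controlled coefficients; the resonant or near-resonant terms of degree $\leq k$, where the ``order of contact $k$'' assumption enters, are absorbed into the normal form. The adaptive choice of $|I_i|$ is used to arrange that the remaining error of order $\geq k+1$ at each stage shrinks geometrically in $i$, which is the input needed to make the telescope converge uniformly on compact subsets of $\Omega$.

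The main obstacle will be showing that the image of the limit map is all of $\mathbb{C}^2$, i.e.\ surjectivity and hence biholomorphicity. The inward convergence on compact subsets of $\Omega$, and injectivity of the limit, should follow from the geometric decay of conjugation errors combined with standard Cauchy estimates for the polynomial $H_i$. The subtler outward estimate amounts to controlling how fast $\Phi_N^{-1}$ can shrink, or equivalently how long a forward orbit of a given point takes to enter the unit ball under the polynomial normal-form dynamics. Diagonality is again essential here: it reduces the normal-form iteration to a coordinate-by-coordinate problem with eigenvalues sandwiched between $C$ and $D$, so that the autonomous analogue is a polynomial whose basin is explicitly all of $\mathbb{C}^2$. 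The remaining technical work is then the careful bookkeeping needed to propagate these estimates through the telescope of adaptive blocks without spoiling the global structure.
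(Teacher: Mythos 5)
Your plan shares the paper's skeleton---adaptive intervals, normal-form conjugacies, a telescoping limit $\Phi = \lim \Phi_N$---but it mislocates the difficulty and leaves the crucial step unaddressed. The hypothesis $D^{k+1}<C$ together with order of contact $k$ is not used to kill degree-$\ge k+1$ monomials by a homological equation; those terms never threaten the telescope at all, since the remainder $O(\|z\|^{k+1})$ contributes a factor $D^{(k+1)n}$ against the factor $C^{-n}$ from backward iteration of the normal form, which is already summable. The entire difficulty is in degree $k$: the monomials $z^k$ (second component) and $w^k$ (first component) have divisors $b_{n,m}-a_{n,m}^k$ and $a_{n,m}-b_{n,m}^k$ which can be arbitrarily small, so they cannot be removed and must be retained in a lower or upper triangular normal form. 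Which direction of triangularity is forced depends on whether $|a_{n,m}|$ dominates $|b_{n,m}|$ or vice versa, and this can switch infinitely often. Your proposal acknowledges that degree-$\le k$ terms are ``absorbed into the normal form'' but says nothing about the structure of the resulting sequence, and in particular nothing about the switching.

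The adaptive trains are precisely the mechanism that controls this switching, and the purpose you assign to the adaptivity (making ``the remaining error of order $\ge k+1$\ldots shrink geometrically'') is not the one that is actually needed. The trains of Proposition~\ref{prop:trains} are built so that (i) on each $I_j$ the dominance can be made uniform by linear rescalings $l_n(z,w)=(\theta_n z,\tau_n w)$ (Theorem~\ref{thm:directing}) without inflating the degree-$k$ coefficients---this uses the bounded-distortion constraints \eqref{boundeddistortion} together with the engine estimates \eqref{eq:train1}--\eqref{eq:train3}---and (ii) the lengths satisfy $\sum |I_j|/k^j=\infty$, which is exactly the sparsity hypothesis of Lemma~\ref{lemma:sparse}. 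Without that lemma (or a substitute), your outward estimate has no foundation: a non-autonomous sequence of triangular polynomials whose triangular direction keeps switching need not have basin $\mathbb{C}^2$ unless the switches are sparse. You also cannot conjugate the block composition $g_i$ as a single autonomous map: the paper's step-by-step conjugation $h_{n+1}\circ f_n = g_n\circ h_n + O(k+1)$ reduces to an affine recursion $\alpha_{n+1}=(a_n/b_n^k)\alpha_n + c_n/b_n^k$ whose bounded orbit exists because $|a_n/b_n^k|\ge 1/D$ at each individual step once $|a_n|\ge|b_n|$---a condition enforced by the rescaling inside the train, not visible from the block endpoints alone. Finally, the ``well-controlled coefficients'' claim for eliminating degree-$\ge k+1$ monomials is itself suspect: the divisor is comparable to $|\lambda_{j,i}|\approx C^{|I_i|}$ while the coefficients of $g_i$ at those degrees are only of size $\approx D^{|I_i|}$, so the quotient $(D/C)^{|I_i|}$ blows up with the block length.
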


\begin{thm}\label{thm:general}
Let $(f_n)$ be a uniformly attracting sequences of automorphisms of $\mathbb C^2$, and suppose that
$$
D^{11/5} < C.
$$
Then the basin of attraction $\Omega_{(f_n)}$ is biholomorphic to $\mathbb C^2$.
\end{thm}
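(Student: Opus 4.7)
The plan is to emulate the adaptive-block construction that yields Theorem \ref{thm:main}, but applied to a non-diagonal normal form so that the hypothesis on the linear part can be dropped. Because we are in $\mathbb{C}^2$, I would first conjugate the linear part of each $f_n$ into upper-triangular form by a unitary change of coordinates depending on $n$; this preserves the estimates \eqref{eq:uniform} and concentrates the obstruction to the diagonal case into a single off-diagonal entry of size $O(D)$. The cost of carrying this entry along a block of length $N$ is an accumulated off-diagonal contribution of order $N D^{N-1}$ in the linearized dynamics, which is the first new source of loss compared to Theorem \ref{thm:main}.

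Next, I would partition $\mathbb{N}$ into blocks $I_\ell = [n_\ell, n_{\ell+1})$ of adaptively chosen lengths $N_\ell$, and on each block conjugate the composition $F_\ell := f_{n_{\ell+1}-1} \circ \cdots \circ f_{n_\ell}$ to a normal form by polynomial changes of coordinates $\Phi_\ell$ that kill all non-resonant monomials up to some degree $k_\ell$. Following the Abbondandolo--Majer strategy, these polynomial conjugations can be constructed explicitly with bounds controlled by $C$, $D$, $N_\ell$, and the size of the off-diagonal entry. The effective block dynamics is then an upper-triangular normal-form contraction plus a tail of order $k_\ell+1$.

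The heart of the argument is a balancing step. On a block of length $N$, truncating the normal form at degree $k$ leaves three competing terms: a higher-order tail of size $\sim D^{N(k+1)}$ on the relevant ball, an off-diagonal contamination of size $\sim N D^N$ produced by the linear part, and a contraction factor $\sim C^N$ governing the scale of the orbit. For the telescoping product $\Phi_1 \circ \Phi_2 \circ \cdots$ to converge on $\Omega$, both error terms must be dominated by a geometric sequence with ratio less than one once rescaled by $C^{-N}$. Optimizing $k$ and $N$ jointly against these two competing losses is what forces the exponent $11/5$ rather than the cleaner $D^2 < C$ of the diagonal case.

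Once the block conjugations are assembled, the standard Forn\ae ss--Stens\o nes argument produces a biholomorphism $\Omega \to \mathbb{C}^2$ as the limit of the rescaled inverses. The \emph{main obstacle} is precisely the balancing step: unlike in the diagonal situation, lengthening a block not only shrinks the higher-order tail but simultaneously aggravates the off-diagonal contamination, so the adaptive choice of $N_\ell$ is genuinely constrained from both sides. Verifying that the off-diagonal loss can still be absorbed under the hypothesis $D^{11/5} < C$, without having to strengthen to $D^2 < C$, is where the bulk of the technical work lies, and the exponent $11/5$ emerges as the sharp outcome of this optimization.
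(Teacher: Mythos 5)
Your sketch identifies some surface-level ingredients that do appear in the paper (a non-autonomous unitary change of coordinates to triangularize the linear parts, an adaptive partition of $\mathbb{N}$ into blocks, and a balancing of competing error terms), but the core mechanism you propose is not the one the paper uses, and the crucial idea that makes the whole thing work is absent from your account.

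First, you propose to conjugate the \emph{block composition} $F_\ell = f_{n_{\ell+1}-1}\circ\cdots\circ f_{n_\ell}$ to a normal form on each block, killing monomials up to some degree $k_\ell$. The paper does not do this. Instead it keeps the non-autonomous structure: within each train $[p_j,p_{j+1})$ it conjugates each individual map $f_n$ to a lower-triangular polynomial $g_n$ of \emph{fixed} degree two via intermediate maps $h^j_n$, as in Lemma \ref{lemma:conjugation}. The point is precisely that you cannot afford to treat the block as one autonomous map; the coefficients of the conjugating maps $h^j_n$ necessarily grow within a train (Lemma \ref{lemma:afschatting}), and the argument only closes because they return to a uniform bound $X$ at the endpoints $p_j$ (Corollary \ref{cor:pj+1pj}). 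Your proposal has no analogue of this growth-and-recovery bookkeeping, which is where the real difficulty lies.

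Second, and more fundamentally, you never say how the block lengths are chosen, and the heuristic you give for the off-diagonal loss ($\sim N D^{N-1}$) is not the controlling quantity. The paper's trains are defined recursively by a sharp inequality relating $|\det Df_{q_{j+1},p_{j+1}}(0)|$ to $|df_{\cdot,p_j}(v_j)|^{x+1}$ (Definition \ref{def:trains}), and each train carries its own distinguished vector $v_j$ chosen as an eigenvector of $U_j\circ Df_{q_j,p_j}(0)$. Each train is split into an \emph{engine} $[p_j,q_j)$, on which strong estimates hold (Lemma \ref{lemma:Grieks}, Theorem \ref{thm:trein}), and wagons, where only weaker average estimates hold (Lemma \ref{lemma:trein}). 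The engine provides the buffer that absorbs the errors on the wagons. You mention neither the engine/wagon split nor the adaptive criterion that triggers a new train, so your balancing argument has no concrete handle on why the adaptive choice succeeds where a fixed partition does not. Finally, the exponent $11/5$ does not come from jointly optimizing a degree $k_\ell$ and a length $N_\ell$; the paper fixes quadratic conjugations and $x=3/2$, and $11/5$ emerges from the constraints $\epsilon < \min\{(11-5k)/4,(5-2k)/3,(3-k)/2\}$ needed to make the estimates in Lemmas \ref{lemma:maxpj+1pj} and \ref{lemma:maxnpj+1} close. As written, your proposal gestures at the right neighborhood but does not contain a proof, and the specific route you propose (block-composition normal forms) would not obviously reproduce the engine-buffer structure that the argument needs.
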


In the next section we will place these two results in a historical perspective. While Theorems \ref{thm:main} and \ref{thm:general} are significant improvements over previously known results, our main contribution lies in a new way of thinking about \emph{trains}, as introduced by Abbondandolo and Majer in \cite{AM}.

\medskip

The authors would like to thank Alberto Abbondandolo, Leandro Arosio, John Erik Forn{\ae}ss, Jasmin Raissy, Pietro Majer and Liz Vivas for many stimulating discussions. The first author was supported by a SP3-People Marie Curie Actionsgrant in the project Complex Dynamics (FP7-PEOPLE-2009-RG, 248443).

\section{A short history}

Let $f$ be an automorphism of a complex manifold $X$ of dimension $m$, and let $p \in X$ be an attracting fixed point. Define the basin of attraction by
$$
\Omega = \{ z \in X \; \mid \; f^n(z) \rightarrow p\}.
$$
The following result was proved independently by Sternberg \cite{Sternberg} and Rosay-Rudin \cite{RR}.

\begin{thm}[Sternberg, Rosay-Rudin]
The basin $\Omega$ is biholomorphic to $\mathbb C^m$.
\end{thm}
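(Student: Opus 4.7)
The plan is to produce a biholomorphism $\Phi : \Omega \to \mathbb{C}^m$ by first conjugating $f$ to an explicit polynomial model on a neighborhood of $p$, and then propagating this conjugation to all of $\Omega$ using the dynamics of $f$. Since the statement is local at $p$, we may pass to a holomorphic chart and assume $p = 0 \in \mathbb{C}^m$; write $A := df_0$, whose eigenvalues $\lambda_1, \ldots, \lambda_m$ all have modulus strictly less than one.

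For the local model I would invoke the Poincar\'e--Dulac normal form. Because every $|\lambda_i| < 1$, only finitely many multi-indices $\alpha$ can satisfy a resonance relation $\lambda^\alpha = \lambda_j$, so after a local biholomorphism $\phi$ between neighborhoods $U, V$ of $0$ in $\mathbb{C}^m$, the map $f$ is conjugated to a polynomial map $g$ with linear part $A$ whose nonlinear part consists of resonant monomials. In the non-resonant case this reduces to Schr\"oder's linearization, $g = A$. In general, ordering the eigenvalues by modulus reveals that $g$ is triangular; in particular $g$ is a polynomial automorphism of $\mathbb{C}^m$, and its triangular contracting structure makes the basin of $0$ under $g$ equal to all of $\mathbb{C}^m$. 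Thus we have $\phi \circ f = g \circ \phi$ on $U$ with $g$ a global polynomial automorphism attracting all of $\mathbb{C}^m$ to $0$.

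Next, extend $\phi$ to $\Omega$ by the formula
$$
\Phi(z) := g^{-n}\bigl(\phi(f^n(z))\bigr),
$$
for any $n$ large enough that $f^n(z) \in U$. The functional equation makes the right hand side independent of the choice of such $n$, and because that choice can be made locally uniform on $\Omega$, the map $\Phi$ is holomorphic. Injectivity is immediate from the injectivity of $\phi$ and of $f^n$. For surjectivity, given $\zeta \in \mathbb{C}^m$ choose $N$ so large that $g^N(\zeta) \in V$; then $z := f^{-N}\bigl(\phi^{-1}(g^N(\zeta))\bigr)$ is well defined in $\Omega$ and $\Phi(z) = \zeta$.

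The main obstacle is the local step. In the resonant case Schr\"oder fails, and one needs Poincar\'e--Dulac together with the observation that the resonant monomials available in a contracting situation fit into a triangular pattern, so that the normal form $g$ extends to a polynomial automorphism of $\mathbb{C}^m$ whose basin at $0$ is all of $\mathbb{C}^m$. Once such a global model $g$ is in hand, the extension step is essentially formal: the dynamics of $f$ on $\Omega$ and of $g$ on $\mathbb{C}^m$ match up via $\phi$, and $\Phi$ is forced to be a biholomorphism.
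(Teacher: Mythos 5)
Your proof is correct but takes a different route from the one sketched in the paper (which follows the appendix of Rosay--Rudin). You invoke the full Poincar\'e--Dulac theorem to obtain an \emph{exact} local holomorphic conjugation $\phi \circ f = g \circ \phi$ to a triangular polynomial normal form $g$, after which $\Phi(z) = g^{-n}(\phi(f^n(z)))$ is literally independent of $n$ (for $n$ large) and no limit argument is needed. This is clean, and your reduction of resonant monomials to a triangular pattern via the eigenvalue ordering, and your observation that a contracting triangular polynomial has basin all of $\mathbb{C}^m$, are both correct. The paper instead avoids Poincar\'e--Dulac convergence altogether: it produces, for a finite $k$ with $D^{k+1} < C$, a polynomial $X_k = \mathrm{Id} + \text{h.o.t.}$ that conjugates $F$ to the triangular $G$ only modulo $O(\|z\|^{k+1})$ (pure linear algebra, no power-series convergence), and then shows that $\Phi_n = G^{-n} \circ X_k \circ F^n$ converges on compacts using a Lipschitz estimate for $G^{-1}$. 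The trade-off is instructive: your version buys a cleaner extension step at the cost of a deeper input (convergence of the normalizing series, valid here because the eigenvalues lie in the Poincar\'e domain), while the paper's finite-order version is more elementary and, crucially, is the one that generalizes to the non-autonomous setting where an exact conjugation is unavailable and only finite-order, time-dependent conjugations can be constructed.
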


If $p$ is not attracting but hyperbolic, one considers the restriction of $f$ to the stable manifold. This restriction is an automorphism of $\Sigma^s_f(p)$ with an attracting fixed point at $p$. Moreover, its basin of attraction is equal to the entire stable manifold, which is therefore equivalent to $\mathbb C^m$. This naturally raised Conjecture \ref{conj:stable}. Equivalence to $\mathbb C^m$ of \emph{generic} stable manifolds was proved by Jonsson and Varolin in \cite{JV}.

\begin{thm}[Jonsson-Varolin]\label{thm:JV}
Let $X$, $f$ and $K$ be as in Conjecture \ref{conj:stable}. For a generic point $p \in K$ the stable manifold through $p$ is equivalent to $\mathbb C^m$.
\end{thm}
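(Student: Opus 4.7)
The plan is to reduce the problem to the non-autonomous framework of Conjecture \ref{conj:main} and then exploit the Oseledets splitting at generic points, in the spirit of the original approach of Jonsson and Varolin.

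First I would pass to the non-autonomous setting exactly as above Conjecture \ref{conj:main}: pick a compatible chart around each point of the forward orbit of $p$, trivialize the stable bundle, and write $f$ in these charts as a sequence of germs $f_n$ at the origin of $\mathbb C^m$. Uniform hyperbolicity of $f|_K$ implies that, after rescaling and possibly shrinking, the sequence $(f_n)$ satisfies \eqref{eq:uniform}, and $\Sigma^s_f(p)$ is biholomorphic to $\Omega_{(f_n)}$.

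Next I would make the word ``generic'' ergodic-theoretic. Let $\mu$ be an $f$-invariant Borel probability measure supported on $K$. By the Oseledets multiplicative ergodic theorem, for $\mu$-almost every $p$ the Lyapunov exponents $\lambda_1 \le \cdots \le \lambda_m < 0$ along the stable direction exist, together with a measurable invariant splitting $T_p \Sigma^s_f(p) = E_1(p) \oplus \cdots \oplus E_s(p)$ realising them; the theorem will then hold on this full-measure set.

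The central step is to conjugate the non-autonomous sequence $(f_n)$ to a polynomial normal form of bounded degree, which is a non-autonomous Poincar\'e--Dulac theorem. A preliminary block-triangularising change of coordinates turns the linear parts $L_n$ into upper triangular matrices whose diagonal entries have moduli controlled, up to subexponential error, by $e^{\lambda_i}$. Only finitely many resonances $\lambda_{i_1} + \cdots + \lambda_{i_r} = \lambda_j$ among the negative exponents can hold, so there is an integer $N$ beyond which all monomials are non-resonant. One then kills all non-resonant monomials of degree at most $N$ by successively solving homological equations of the form $L_{n+1} \Phi_n - \Phi_{n+1} L_n = \textrm{remainder}$. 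Convergence of the resulting infinite composition rests on Pesin-style subexponential growth estimates for the conjugating maps along a generic orbit, which in turn follow from the Oseledets regularity of the splitting.

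After this conjugation the problem reduces to showing that a uniformly attracting sequence $(g_n)$ of triangular polynomial maps of bounded degree has basin biholomorphic to $\mathbb C^m$. I expect this final step to be the main obstacle, since the naive iterate $g_n \circ \cdots \circ g_0$ has degree that blows up exponentially in $n$. The remedy is to exploit the triangular structure: build the biholomorphism $\Omega \to \mathbb C^m$ one coordinate at a time, so that in each individual coordinate the inverse branches of the iterated sequence have only polynomially growing degree in $n$. A Rosay--Rudin style limit then produces the required biholomorphism and shows that it exhausts $\mathbb C^m$.
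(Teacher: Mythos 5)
The paper does not prove Theorem~\ref{thm:JV}; it simply quotes it from Jonsson and Varolin \cite{JV}, so there is no in-paper argument to compare your attempt against. With that caveat, your sketch is a fair reconstruction of the original strategy: pass to the non-autonomous germ sequence along the forward orbit of $p$, invoke the Oseledec theorem at generic points, conjugate to a bounded-degree Poincar\'e--Dulac polynomial normal form, and show that the resulting triangular polynomial sequence has basin biholomorphic to $\mathbb C^m$.

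Two places where the sketch is still soft. First, genericity is used for more than the mere existence of Lyapunov exponents: the argument really needs Oseledec \emph{regularity} of $p$, i.e.\ tempered (subexponential) growth of the angles between the Oseledec subspaces and of the comparison constants along the orbit, because that is precisely what guarantees that the block-triangularising coordinate changes and the solutions of the homological equations you write down grow only subexponentially. You mention this in passing (``Pesin-style subexponential growth estimates'') but it is the heart of the matter; without it the infinite composition defining your conjugation need not converge. Also note that ``generic'' in the statement means full measure with respect to \emph{every} invariant measure, so you should argue on the set of Oseledec-regular points rather than fix a single $\mu$. Second, your final paragraph is the part that requires real work, and ``build the biholomorphism one coordinate at a time'' is a heuristic rather than an argument. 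The standard device, both in Sternberg/Rosay--Rudin and in Jonsson--Varolin, is that lower (or upper) triangular polynomial maps have iterates $G^n$ whose degree is bounded independently of $n$ (by induction on the number of variables, as this paper recalls in its discussion of the autonomous case), so the backward maps $G^{-n}$ are uniformly Lipschitz on a fixed small ball; that uniform bound is what makes the Rosay--Rudin telescoping sum converge and is what you should make precise in the non-autonomous setting.
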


Here generic refers to a subset of $K$ which has full measure with respect to any invariant probability measure on $K$. In fact, Jonsson and Varolin showed that Conjecture \ref{conj:stable} holds for so-called Oseledec points. The results of Jonsson and Varolin were extended by Berteloot, Dupont and Molino in \cite{BDM}. More recently the following was shown in \cite{AAM}:

\begin{thm}[Abate-Abbondandolo-Majer]\label{thm:Lyapunov}
The existence of Lyapunov exponents is enough to guarantee $\Sigma_f^s(p) \cong \mathbb C^m$.
\end{thm}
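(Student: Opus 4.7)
The plan is to reduce Theorem~\ref{thm:Lyapunov} to Conjecture~\ref{conj:main} and then use Lyapunov regularity to obtain a sequence of automorphisms satisfying the uniform attraction hypothesis~\eqref{eq:uniform} along the orbit of $p$. First, I would construct local holomorphic charts $\phi_n : (U_n, f^n(p)) \to (\mathbb{C}^m, 0)$ along the orbit, with uniform lower bounds on the radii of the $U_n$, and form the transition germs $f_n = \phi_{n+1} \circ f \circ \phi_n^{-1}$. By the standard Forn{\ae}ss--Stens{\o}nes reduction, $\Sigma_f^s(p)$ is biholomorphic to the basin $\Omega_{(f_n)}$, so the problem becomes one of non-autonomous dynamics in $\mathbb{C}^m$.

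Next, I would exploit the hypothesis that the Lyapunov exponents exist at $p$: the limit $\lim_{n \to \infty} \tfrac{1}{n} \log \|Df^n(p) v\|$ exists for every $v$ in the stable subspace and takes finitely many negative values $\lambda_1 < \cdots < \lambda_s < 0$. Following Pesin theory I would construct Lyapunov-adapted Hermitian norms $\|\cdot\|_n$ on $T_{f^n(p)} X$ in which $Df(f^n(p))$ contracts uniformly with rate in $[e^{\lambda_1 - \varepsilon}, e^{\lambda_s + \varepsilon}]$, and which are comparable to the original one with subexponential distortion $\|v\| \leq \|v\|_n \leq K_\varepsilon e^{\varepsilon n} \|v\|$ for any $\varepsilon > 0$. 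Rescaling the charts $\phi_n$ by the Lyapunov factors turns $(f_n)$ into a uniformly attracting sequence in the sense of~\eqref{eq:uniform} on a fixed ball, with uniformly controlled non-linear parts.

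The remaining task is to prove $\Omega_{(f_n)} \cong \mathbb{C}^m$ for the resulting sequence. Here I would apply the train technique of Abbondandolo--Majer: on successive time windows of carefully chosen length, solve the non-autonomous cohomological equations that eliminate all non-resonant monomials in the Taylor expansion of $f_n$ up to a growing order $k_n$. Since every $\lambda_i$ is negative, the resonance conditions $\sum m_i \lambda_i = \lambda_j$ have only finitely many solutions, so for $k_n$ large the remaining resonant part is polynomial of bounded degree. A telescoping argument assembles the window-by-window conjugacies into a single biholomorphism from $\Omega_{(f_n)}$ onto the basin of the limiting resonant polynomial sequence, which is visibly $\mathbb{C}^m$.

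The main obstacle is balancing the subexponential factor $e^{\varepsilon n}$ coming from the Lyapunov norms against the errors accumulated by the non-autonomous conjugation. The fact that the limits defining the Lyapunov exponents actually \emph{exist} (rather than only $\limsup$ and $\liminf$ agreeing on a full-measure set, as in the Oseledec setting) allows $\varepsilon$ to be taken arbitrarily small compared to any fixed spectral gap. This is what makes the distortion absorbable into the tails of the Taylor expansion removed at each train stage, and it is what lets the construction succeed at the single point $p$ rather than only generically as in Theorem~\ref{thm:JV}.
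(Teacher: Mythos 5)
This theorem is stated in the paper as background only, attributed to Abate--Abbondandolo--Majer \cite{AAM}, and the paper contains no proof of it; so there is nothing in the present source to compare your sketch against. Evaluating your sketch on its own merits, the opening reduction (Forn{\ae}ss--Stens{\o}nes charts along the orbit, turning the stable manifold into a non-autonomous basin) is correct and is indeed the standard first move. The trouble begins with the claim that Lyapunov-adapted Hermitian norms make the resulting sequence $(f_n)$ uniformly attracting with a bunching ratio. Pesin norms give you $Df(f^n p)$ contracting with rates in $[e^{\lambda_1-\varepsilon}, e^{\lambda_s+\varepsilon}]$, so the constants $C,D$ of \eqref{eq:uniform} you obtain satisfy $D/C \approx e^{\lambda_s-\lambda_1+2\varepsilon}$, a quantity fixed by the spread of the Lyapunov spectrum and not shrinkable by taking $\varepsilon$ small. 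In particular you do not arrive in the regime $D^{2+\epsilon}<C$ (nor any $D^k<C$ with $k$ large enough to kill the nonlinear tail), so the bunching-condition machinery of \cite{AM} --- the trains --- cannot be invoked as you propose. Those techniques are designed precisely for the bunched case and become available under strong a priori ratio assumptions; they are not a tool for absorbing an arbitrary spectral spread.

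The second, compounding gap is in the normal-form step. You propose to solve non-autonomous cohomological equations window by window, observe that only finitely many resonances $\sum m_i\lambda_i=\lambda_j$ occur, and conclude that the limiting resonant polynomial sequence has basin ``visibly $\mathbb{C}^m$.'' Two things are missing. First, the basin of a non-autonomous sequence of resonant polynomial maps is not automatically $\mathbb{C}^m$; one needs the maps to be lower triangular with respect to a fixed flag and uniformly bounded, and extracting such a flag-compatible structure from a non-autonomously changing Lyapunov filtration (where the splitting is not $Df$-invariant in the stable block, only the flag is) is a real piece of work, not an observation. Second, and this is the heart of the theorem, the distortion factor $K_\varepsilon e^{\varepsilon n}$ in your Lyapunov norms must be beaten by the decay of the nonlinear remainder at every scale; since $K_\varepsilon\to\infty$ as $\varepsilon\to 0$, the choice of $\varepsilon$ cannot be uncoupled from the window lengths, and balancing these is exactly what an actual proof must do. Your sketch names the obstacle in the last paragraph but does not give an argument for overcoming it. As written, the sketch would only establish the result under an unstated bunching hypothesis on the Lyapunov spectrum, which is strictly weaker than the theorem's claim.
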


Shortly after the positive result of Jonsson and Varolin, Forn{\ae}ss proved the following negative result, which led many people to believe that Conjectures \ref{conj:stable} and \ref{conj:main} must be false. Consider a sequence of maps $(f_n)_{n\ge 0}$ given by
$$
f_n: (z,w) \rightarrow (z^2 + a_n w, a_n z),
$$
where $|a_0| < 1$ and $|a_{n+1}| \le |a_n|^2$.

\begin{thm}[Forn{\ae}ss]\label{thm:short}
The basin $\Omega_{(f_n)}$ is not biholomorphic to $\mathbb C^2$. Indeed there exists a bounded plurisubharmonic function on $\Omega_{(f_n)}$ which is not constant.
\end{thm}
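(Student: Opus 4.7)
The plan is to construct a bounded non-constant plurisubharmonic function on $\Omega_{(f_n)}$. Since every bounded psh function on $\mathbb{C}^2$ is constant, such a function immediately rules out $\Omega_{(f_n)}\cong\mathbb{C}^2$ and proves both assertions of the theorem.

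Writing $F_n:=f_n\circ\cdots\circ f_0$ and $(z_n,w_n):=F_n(z,w)$, the recursion reads
\[
z_{n+1}=z_n^2+a_{n+1}w_n,\qquad w_{n+1}=a_{n+1}z_n,
\]
so $w_n=a_n z_{n-1}$ and the hypothesis $|a_{n+1}|\le|a_n|^2$ forces the doubly exponential decay $|a_n|\le|a_0|^{2^n}$; in particular the $w$-coordinate decays much faster than the $z$-coordinate. I would introduce the candidate potentials
\[
\psi_n(z,w):=\frac{1}{2^n}\log|z_n(z,w)|,
\]
each plurisubharmonic on $\mathbb{C}^2$ because $z_n$ is polynomial. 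Writing $z_{n+1}=z_n^2(1+\eta_n)$ with $\eta_n:=a_{n+1}w_n/z_n^2$ produces the telescope $\psi_{n+1}-\psi_n=2^{-n-1}\log|1+\eta_n|$, so convergence of $(\psi_n)$ reduces to bounding $|\eta_n|$.

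On an open subset $U\subset\Omega$ cut out by $|w_0|$ small and $|z_0|^2>C|a_0|^3$ for a suitable $C$, I would prove by induction on $n$ that $|\eta_n|\le1/2$. Once this holds at step $n$ we get $|z_{n+1}|\ge\tfrac12|z_n|^2$, hence the lower bound $|z_n|\gtrsim(|z_0|/2)^{2^n}$; combined with $|w_n|\le|a_n|$ and $|a_{n+1}a_n|\le|a_0|^{3\cdot 2^n}$, this reproduces $|\eta_{n+1}|\le1/2$ under the standing condition on $|z_0|$ and $|a_0|$. The telescope then converges uniformly on compacta of $U$ to a continuous psh function $\psi$ satisfying $\psi(z,w)=\log|z|+o(1)$ as $|a_0|\to 0$ for fixed $z$, so $\psi$ is non-constant on $U$. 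To promote this to a psh function on all of $\Omega$, note that $(\psi_n)$ is locally uniformly bounded above there (the orbits tend to $0$, so $\psi_n\le 0$ on any fixed compactum for $n$ large), so the upper semicontinuous regularisation $\psi^{*}:=\bigl(\limsup_n\psi_n\bigr)^{*}$ is psh on $\Omega$ by a standard fact of pluripotential theory and agrees with the continuous limit on $U$. Finally, choosing $M$ so that $\psi^{*}$ straddles the value $-M$ (possible because $\psi^{*}\sim\log|z_0|$ on $U$), the truncation $\Psi:=\max(\psi^{*},-M)$ is bounded, psh, and non-constant on $\Omega$.

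The hard part is the inductive bound $|\eta_n|\le1/2$: the error $\eta_n$ is a race between the doubly exponential decay of the numerator $|a_{n+1}a_n|\lesssim|a_0|^{3\cdot 2^n}$ and the doubly exponential shrinkage of the denominator $|z_n|^2\gtrsim(|z_0|/2)^{2\cdot 2^n}$. The hypothesis $|a_{n+1}|\le|a_n|^2$ is exactly what makes the numerator shrink faster than the denominator on $U$, carving out a non-empty open set on which $(\psi_n)$ converges and the construction succeeds.
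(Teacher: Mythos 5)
The paper does not actually prove this theorem: it is stated as background and attributed to Forn{\ae}ss's article \emph{Short $\mathbb{C}^k$} \cite{Fornaess}, so there is no in-paper argument to compare against. Your Green-function construction $\psi_n=2^{-n}\log|z_n|$ is, to the best of my knowledge, essentially the one Forn{\ae}ss uses in that reference, so the approach is the right one.

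Two points need tightening before the sketch becomes a proof. First, the inductive estimate $|\eta_n|\le\tfrac12$ requires two constraints to run simultaneously: a lower bound of the shape $|z_0|\gtrsim|a_0|^{3/2}$, so that $|a_{n+1}a_n|\le|a_0|^{3\cdot 2^n}$ loses the race against $|z_n|^2\gtrsim(|z_0|/2)^{2^{n+1}}$, and an upper bound, say $|z_0|\le\tfrac23$, needed to guarantee inductively that $|z_n|\le 1$, which is what legitimizes $|w_n|=|a_n|\,|z_{n-1}|\le|a_n|$. These two conditions are simultaneously satisfiable only when $|a_0|$ is sufficiently small, whereas the theorem assumes only $|a_0|<1$. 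This is fixable rather than fatal: pass to a tail $(f_n)_{n\ge N}$, whose leading coefficient $|a_N|\le|a_0|^{2^N}$ is as small as you like, build the bounded non-constant plurisubharmonic function on its basin, and pull it back to $\Omega_{(f_n)}$ through the global automorphism $f_{N-1}\circ\cdots\circ f_0$ --- but the reduction should be stated. Second, the sentence ``$\psi(z,w)=\log|z|+o(1)$ as $|a_0|\to 0$ for fixed $z$'' is ill-posed, since $a_0$ is fixed data rather than a parameter tending to zero. What you actually want is that on $U$ one has $\psi=\log|z_0|+T$ with $|T|\le\sum_{n\ge 0}2^{-(n+1)}\log 2=\log 2$ (using $|\eta_n|\le\tfrac12$), while $\log|z_0|$ varies by strictly more than $2\log 2$ across the annulus $2|a_0|^{3/2}<|z_0|<\tfrac23$; that variation is large once $|a_0|$ is small, which the same tail-shift provides. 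With those points filled in, your reconstruction of Forn{\ae}ss's argument is complete.
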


We note that this result does not give a counterexample to Conjecture \ref{conj:main}, as the sequence $(f_n)$ violates the condition
$$
C\|z\| \le \|f_n(z)\| \le D \|z\|
$$
on any uniform neighborhood of the origin. In Theorem \ref{thm:short} the rate of contraction is not uniformly bounded from below.

If the bounds $C$ and $D$ satisfy $D^2 < C$ then it was shown by Wold in \cite{Wold} that the basin of attraction is biholomorphic to $\mathbb C^m$. This result was generalized by Sabiini \cite{Sabiini} to the following statement.

\begin{thm}\label{thm:wold}
Let $(f_n)$ be a sequence of automorphisms which satisfies the conditions in Conjecture \ref{conj:main}, and suppose that $D^k < C$ for some $k \in \mathbb N$. Suppose further that the maps $f_n$ all have order of contact $k$. Then the basin of attraction $\Omega_{(f_n)}$ is biholomorphic to $\mathbb C^m$.
\end{thm}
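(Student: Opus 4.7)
The plan is to generalize Wold's $D^2 < C$ argument by conjugating the sequence $(f_n)$ to a polynomial normal-form sequence, rather than just to the sequence of linear parts. For each $n$, let $g_n$ denote the truncation of $f_n$ to its $(k-1)$-jet at the origin; the order-of-contact-$k$ hypothesis then provides a uniform bound $\|f_n(z) - g_n(z)\| = O(\|z\|^k)$ on the unit ball, and the sequence $(g_n)$ inherits the same uniform attraction constants $C, D$.

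The core step is to construct biholomorphisms $\Psi_n = \Id + h_n$ on a common neighborhood $U$ of the origin, with $h_n(z) = O(\|z\|^k)$, satisfying the conjugation identity $\Psi_{n+1}\circ f_n = g_n\circ\Psi_n$ on $U$. Rewriting yields the functional equation
\[
h_{n+1}(f_n(z)) \;=\; g_n(z + h_n(z)) - g_n(z) - \bigl(f_n(z) - g_n(z)\bigr),
\]
which I would solve by a recursive scheme starting from $h_0 \equiv 0$. The forcing term $f_n - g_n = O(\|z\|^k)$, the contraction lower bound $\|f_n(z)\| \ge C\|z\|$, and the uniform Lipschitz behavior of $g_n$ together yield a recurrence on the size of $\|h_n\|_U$; the hypothesis $D^k < C$ is precisely the threshold that lets one close an inductive bound $\sup_n \|h_n\|_U < \infty$ and ensure that each $\Psi_n$ is a biholomorphism onto its image.

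Once the $\Psi_n$ are in hand, the rest is by now standard. The basin $\Omega_{(g_n)}$ of the polynomial sequence is biholomorphic to $\mathbb C^m$ by a Rosay--Rudin-type construction: since each $g_n$ is polynomial of degree at most $k-1$, the composites $g_n\circ\cdots\circ g_0$ can be inverted on increasingly large domains near the origin, and after renormalization by the linear parts of $g_n$ at $0$ these converge to a biholomorphism onto all of $\mathbb C^m$. The local conjugation $\Psi_0$ then extends to a biholomorphism $\Omega_{(f_n)} \to \Omega_{(g_n)}$ via
\[
\Psi_0 \;=\; (g_n\circ\cdots\circ g_0)^{-1}\circ\Psi_{n+1}\circ(f_n\circ\cdots\circ f_0),
\]
well-defined for $n$ large enough that the orbit of the argument enters $U$; composing with the trivialization of $\Omega_{(g_n)}$ gives the desired biholomorphism to $\mathbb C^m$. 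The main obstacle is getting the conjugation scheme to converge at the sharp threshold $D^k < C$ rather than a cruder sufficient condition such as $D < C^k$; this requires exploiting both the bounded polynomial degree of the normal form $g_n$ and the precise order-$k$ vanishing of each $h_n$ simultaneously.
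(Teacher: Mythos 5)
Your overall plan---conjugate $(f_n)$ to a normal-form sequence and identify the normal-form basin with $\mathbb C^m$---is the right idea and is what underlies the Wold/Sabiini proof. But the forward recursion for the intertwiners does not work. Starting from $h_0\equiv 0$ and solving $\Psi_{n+1}\circ f_n = g_n\circ\Psi_n$ forward forces $\Psi_n = g_{n,0}\circ\Psi_0\circ f_{n,0}^{-1} = g_{n,0}\circ f_{n,0}^{-1}$ (writing $f_{n,0}=f_{n-1}\circ\cdots\circ f_0$). For $w$ in a fixed neighborhood $U$ of the origin, the preimage $f_{n,0}^{-1}(w)$ leaves any fixed ball once $\|w\| > D^n$, so the estimate $\|f_n - g_n\| = O(\|z\|^k)$ on $\mathbb B$ is of no use there and there is no inductive bound $\sup_n\|h_n\|_U < \infty$ to close; the scheme accumulates error under the expanding maps $f_n^{-1}$. (Also, $h_0\equiv 0$ forces $\Psi_0=\mathrm{Id}$, which cannot be the biholomorphism you want; and since each $f_n$ has order of contact $k$, your ``$(k-1)$-jet'' $g_n$ is just the linear part $l_n$, so the Rosay--Rudin polynomial discussion is moot---$\Omega_{(l_n)}=\mathbb C^m$ trivially because $\|l_{n,0}(z)\|\le D^n\|z\|$.)

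The correct construction runs in the contracting direction and needs no exact intertwiner on a fixed ball; this is exactly what the paper packages as Lemma \ref{lemma:conjugation} and spells out in the autonomous discussion. Take $h_n=\mathrm{Id}$, $g_n=l_n$, and consider $\Phi_n=l_{n,0}^{-1}\circ f_{n,0}$ on $\Omega_{(f_n)}$. For a compact $K\subset\Omega_{(f_n)}$ one eventually has $\|f_{n,0}(z)\|\lesssim D^{n}$ on $K$, while $l_{n,0}^{-1}$ has Lipschitz constant $\lesssim C^{-n}$ near the origin, and since $f_n-l_n=O(\|z\|^k)$ this gives $\|\Phi_{n+1}-\Phi_n\|_K\lesssim C^{-n}\,(D^n)^k=(D^k/C)^n$. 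The hypothesis $D^k<C$ makes this summable, so $\Phi_n$ converges locally uniformly to an injective holomorphic map $\Omega_{(f_n)}\to\mathbb C^m$, and surjectivity follows as in the autonomous case. If you insist on exact intertwiners $\Psi_m$ with $\Psi_{m+1}\circ f_m = l_m\circ\Psi_m$, they do exist, but only as the limits $\Psi_m=\lim_{N\to\infty} l_{N,m}^{-1}\circ f_{N,m}$, whose existence already requires this telescoping estimate---so the conjugacy route offers no shortcut around it.
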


Here a map $f$ has order of contact $k$ if $f(z) = l(z) + O(\|z\|^k)$, where $l(z)$ is linear.

The following was recently shown in \cite{AM}:

\begin{thm}[Abbondandolo-Majer]\label{thm:AM}
There exists a constant $\epsilon  = \epsilon(m) > 0$ such that
the following holds: For any sequence $(f_n)$ which satisfies the
conditions in Conjecture \ref{conj:main} with
$$
D^{2+\epsilon} < C,
$$
the basin $\Omega_{(f_n)}$ is biholomorphic to $\mathbb C^m$.
\end{thm}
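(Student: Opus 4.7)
The plan is to implement the \emph{trains} approach pioneered in \cite{AM}: one fixes a large integer $L$ (the ``train length'') depending on $\epsilon$, groups the sequence $(f_n)$ into consecutive blocks of $L$ maps each, and constructs polynomial coordinate changes that normalize the composition on each block. The point is that although we only know $D^{2+\epsilon}<C$, passing to blocks of $L$ iterates effectively produces a new uniformly attracting sequence with contraction rates in $[C^L,D^L]$, and after a Poincar\'e--Dulac-type normalization the non-polynomial part of each block has been pushed to order $N+1$ for some large $N$; this is enough room to close a Wold--type argument.

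First, I would pick integers $N\geq 2$ and $L\geq 1$, both depending on $\epsilon$ and $m$, and set
\[
F_k := f_{(k+1)L-1}\circ\cdots\circ f_{kL+1}\circ f_{kL},\qquad k\geq 0.
\]
The sequence $(F_k)$ is uniformly attracting with constants $(C^L,D^L)$ and has the same basin as $(f_n)$. For each $k$ I would then solve the Poincar\'e--Dulac cohomological equation on the block, producing a polynomial diffeomorphism $\psi_k$ of degree at most $N$, close to the identity near the origin, such that the conjugate $\tilde F_k:=\psi_{k+1}\circ F_k\circ\psi_k^{-1}$ is a polynomial of degree at most $N$ up to a remainder of order $\|z\|^{N+1}$. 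The relevant small divisors at degree $d$ are of the form $\mu^\alpha-\mu_j$, where the $\mu_j$ are the eigenvalues of the linear part of $F_k$; they lie in $[C^L,D^L]$, so the divisors are bounded below by a controlled power of $C^L$, which makes the coefficients of $\psi_k$ controllably small.

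Once each block is in polynomial normal form, I would assemble the global biholomorphism from the partial compositions $\Psi_n:=\psi_0^{-1}\circ\cdots\circ\psi_n^{-1}$, suitably pulled back, to obtain maps from larger and larger subdomains of $\mathbb C^m$ into $\Omega_{(f_n)}$. The nonlinear remainder of $\tilde F_k$ has size $O(\|z\|^{N+1})$ on a domain of radius comparable to $D^{kL}$, so its contribution over the infinite composition is controlled by a geometric series whose ratio is of order $(D^{N+1}/C)^{L}$; under $D^{2+\epsilon}<C$ and with $L$ chosen of order $1/\epsilon$, this ratio is smaller than $1$, yielding both convergence of $\Psi_n$ on compacts of $\Omega_{(f_n)}$ and non-degeneracy of the limit, hence a biholomorphism $\mathbb C^m\to\Omega_{(f_n)}$.

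The main obstacle is the simultaneous choice of $N$ and $L$. Increasing $L$ shrinks the nonlinear remainder but forces the polynomial normal form to be constructed on a smaller neighborhood, and can worsen the geometry of certain small divisors appearing in the cohomological equation; increasing $N$ improves the size of the remainder but requires solving that equation at more degrees, each contributing error factors that depend on $m$ through the number of multi-indices $\alpha$ with $|\alpha|\leq N$. A careful book-keeping produces $L$ of order $1/\epsilon$ and $N$ of the same order, which is precisely what yields the dimension-dependent $\epsilon(m)$ rather than a universal constant.
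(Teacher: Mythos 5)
Your proposal misses the central difficulty that Theorem~\ref{thm:AM} is designed to overcome, and the specific route you describe fails at the cohomological step. The hypothesis $D^{2+\epsilon}<C$ does \emph{not} imply $D^2<C$; the interesting regime is precisely $D^{2+\epsilon}<C\le D^2$. In that regime the small divisors for the degree-$2$ part of the Poincar\'e--Dulac equation on a block, namely $\mu^\alpha-\mu_j$ with $|\alpha|=2$, need not be bounded away from zero: $\mu^\alpha\in[C^{2L},D^{2L}]$ and $\mu_j\in[C^L,D^L]$, and since $D^{2L}$ can exceed $C^L$, these intervals overlap and genuine resonances (e.g.\ $b_n\approx a_n^2$ for all $n$) can occur. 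Your claim that ``the divisors are bounded below by a controlled power of $C^L$'' is therefore false, and the coefficients of $\psi_k$ would blow up along a resonant sequence. This is exactly why the bunching condition of \cite{Wold} requires $D^2<C$ and why passing below that threshold is hard.

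There is a second, structural gap: even if one could push the nonlinear terms to order $N+1$, a polynomial map of degree $\ge 2$ generally has iterates of unbounded degree, and the abstract basin of a sequence of such maps is not automatically $\mathbb C^m$. The autonomous proof (and Lemma~\ref{lemma:dominant}, Lemma~\ref{lemma:sparse}) works because the normal forms are \emph{lower triangular} polynomials, whose iterates have uniformly bounded degree and whose basin is all of $\mathbb C^m$; this is also why the resonant term (the one you cannot kill) is kept in the normal form rather than eliminated. The actual argument of Abbondandolo--Majer does not use blocks of fixed length $L$: it partitions $\mathbb N$ into intervals (trains) of exponentially growing length $|I_j|\sim k^j$, finds on each train a most-contracted tangent direction $v_j$, conjugates by unitary matrices so the linear parts are lower triangular with respect to $v_j$, and then solves a bounded-orbit problem for the nonlinear coefficients that only works because the dominance condition of Lemma~\ref{lemma:dominant} is satisfied ``on average'' over each train. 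The divergence of $\sum (p_{j+1}-p_j)/k^j$ is what controls the unitary ``glitches'' between trains via Lemma~\ref{lemma:sparse}; with fixed $L$ no such averaging is available. Finally, your formula $\Psi_n=\psi_0^{-1}\circ\cdots\circ\psi_n^{-1}$ is not the correct Fatou--Bieberbach telescoping (one needs $\Phi_n=g_{n,0}^{-1}\circ h_n\circ f_{n,0}$, where $g_{n,0}$ must contract and $h_n$ must stay bounded), and as written the $\psi_j$ do not compose across blocks.
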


It was noted in \cite{AM} that for maps with order of contact $k$, one
should be able to obtain the same result if $D^{k+\epsilon}<C$, where
$\epsilon=\epsilon(m,k)$.

While this result only seems marginally stronger (indeed, $\epsilon$-stronger), it is in fact a much deeper result. If $D^k < C$ one can ignore all but the linear terms, see Lemma \ref{lemma:conjugation} in the next section. But if $D^k \ge C$, one has to deal with the terms of order $k$, which is a major difficulty. In fact, looking at more classical results in local complex dynamics, the major difficulty in describing the behavior near a fixed point usually lies in controlling the lowest order terms which are not trivial. Theorems \ref{thm:AM} and \ref{thm:main} may therefore be important steps towards a complete understanding of Conjecture \ref{conj:main}.

Now that techniques have been found to deal with these terms of degree $k$, it is natural to ask whether the condition $D^{k+\epsilon} < C$ can be pushed to the condition $D^{k+1} < C$, since as long as $D^{k+1} < C$ is satisfied one can ignore terms of degree strictly greater than $k$. In Theorem \ref{thm:main} we show that we can indeed weaken the requirement to $D^{k+1} < C$, at least in $2$ complex dimensions and under the additional assumption that the linear parts of all the maps $f_n$ are diagonal.

Whether diagonality is a serious extra assumption or merely simplifies the computations remains to be seen, but we will see that some of the techniques we introduce to prove Theorem \ref{thm:main} can be applied without the diagonality assumption as well, leading to Theorem \ref{thm:general}.

Since the computations in the general case are much more intensive than in the diagonal case, we have chosen not to prove Theorem \ref{thm:general} for maps with higher order of contact. We expect that the interested reader will be able to generalize Theorem \ref{thm:general} to maps with higher order of contact.

We conclude our historical overview with the following two results. The first was proved in \cite{PW}.

\begin{thm}[Peters-Wold]\label{thm:repeat}
Let $(f_j)$ be a sequence of automorphism of $\mathbb C^m$, each with an attracting fixed point at the origin. Then there exists a sequence of integers $(n_j)$ such that the attracting basin of the sequence $(f_j^{n_j})$ is equivalent to $\mathbb C^m$.
\end{thm}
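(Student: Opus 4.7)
\medskip
\noindent\textbf{Proof plan.} My strategy is to choose $n_j$ so large that the sequence $g_j := f_j^{n_j}$ falls within the scope of Theorem~\ref{thm:wold}. Since $0$ is an attracting fixed point of each $f_j$, standard estimates give a neighborhood $U_j \ni 0$ and constants $0 < c_j \leq d_j < 1$ such that $c_j \|z\| \leq \|f_j(z)\| \leq d_j \|z\|$ on $U_j$, and iteration yields $c_j^n \|z\| \leq \|f_j^n(z)\| \leq d_j^n \|z\|$ on correspondingly smaller sets.

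The first hurdle is that the ratio $\log c_j / \log d_j$ is invariant under iterating a single $f_j$, so raising $n_j$ by itself cannot force Wold's condition $D^2 < C$. To get around this I plan to invoke Theorem~\ref{thm:wold} in its $D^k < C$ form: by Poincar\'e--Dulac--Sternberg, each $f_j$ is locally conjugate, via a biholomorphism $\phi_j$ fixing the origin, to its polynomial normal form of prescribed degree plus an $O(\|z\|^{k+1})$ remainder. For sufficiently large $k$ one has $d_j^k < c_j$, so after conjugation the iterate $g_j = f_j^{n_j}$ has effective order of contact $k+1$ and satisfies the inequality with $C = c_j^{n_j}$, $D = d_j^{n_j}$.

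The integers $n_j$ are then chosen inductively, each so large that the image $g_j \circ \cdots \circ g_0(\mathbb{B})$ is pushed deep inside the domain where the local normal form of $f_{j+1}$ is valid and where its $O(\|z\|^{k+1})$ remainder is negligible compared to the linear part. Once this compatibility is arranged, Theorem~\ref{thm:wold} applies to a tail of the sequence $(g_j)$, so the tail basin is biholomorphic to $\mathbb{C}^m$; prepending the finitely many initial automorphisms preserves this property, and the full basin is biholomorphic to $\mathbb{C}^m$ as desired.

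The main obstacle I anticipate is uniformity across $j$. The conjugations $\phi_j$ are each defined on a $j$-dependent neighborhood and need not be close to the identity, so transferring the local estimates into a single global coordinate system in which Theorem~\ref{thm:wold} can be cleanly invoked requires a careful balancing of the contraction rate $d_j^{n_j}$ against the distortion introduced by $\phi_j$. Ensuring that the constants $C, D$ are truly uniform rather than merely $j$-dependent is the most delicate technical point of this plan.
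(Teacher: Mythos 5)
The paper quotes this theorem from the reference \cite{PW} and gives no proof, so there is no in-paper argument to compare against. Judged on its own merits, your plan has a fundamental gap that is more than a ``delicate technical point.''

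You correctly note that the ratio $\log c_j / \log d_j$ is unchanged by iterating $f_j$, and you try to escape this by passing to a higher-order version of Theorem~\ref{thm:wold}. But Sabiini's theorem needs a \emph{single} integer $k$ and a \emph{single} pair $0 < C < D < 1$ with $D^k < C$ valid for the whole sequence. The Poincar\'e--Dulac--Sternberg conjugation does not alter the eigenvalues of $Df_j(0)$, so the contraction ratio of $f_j^{n_j}$ on a small ball is still governed by $(d_j/c_j)^{n_j}$, and the inequality $D^k<C$ reduces to $k > \log c_j / \log d_j$ for \emph{every} $j$ --- a condition that fails for any fixed $k$ as soon as the sequence $(f_j)$ has eigenvalue ratios unbounded in $j$ (which the hypotheses of the theorem certainly allow). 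Equivalently: you would need a separate $k_j$ for each $j$, but Theorem~\ref{thm:wold} requires the same $k$ and the same order of contact for all the maps. No choice of $(n_j)$ can repair this, because iteration scales both $\log c_j$ and $\log d_j$ by the same factor. So reduction to Theorem~\ref{thm:wold} cannot succeed for the general statement; it only covers the special case where $\sup_j \log c_j/\log d_j < \infty$.

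The actual result requires an argument that does not hinge on a uniform bunching inequality. The proof in \cite{PW} exploits the fact that each autonomous basin $\Omega_{f_j}$ is already biholomorphic to $\mathbb C^m$ (Sternberg/Rosay--Rudin), and constructs the biholomorphism for the non-autonomous basin of $(f_j^{n_j})$ by a telescoping scheme: one chooses $n_j$ inductively so large that, in the coordinate systems provided by the individual linearizations, the discrepancy introduced at each hand-off from $f_j$ to $f_{j+1}$ is summable, and the resulting compositions converge. No uniform $C$, $D$, $k$ is needed because the conclusion is extracted step-by-step rather than by a single application of a bunching criterion. If you want to pursue your own plan, you would need to replace the appeal to Theorem~\ref{thm:wold} by such a stage-by-stage convergence argument.
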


The next result, from \cite{Peters}, will be important to us not so much for the statement itself but for the ideas used in the proof. See also the more technical Lemma \ref{lemma:dominant}, which will be discussed in the next section.

\begin{thm}[Peters]\label{thm:perturbations}
Let $F$ be an automorphism of $\mathbb C^m$. Then there exists an $\epsilon > 0$ such that for any sequence $(f_n)$ of automorphisms of $\mathbb C^m$ which all fix the origin and satisfy $\|f_n - F\|_{\mathbb B} < \epsilon$ for all $n$, one has that $\Omega_{(f_n)} \cong \mathbb C^m$.
\end{thm}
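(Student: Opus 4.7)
The plan is to reduce the problem to a small perturbation of a polynomial normal form for $F$ and then construct the biholomorphism $\Omega_{(f_n)}\to\mathbb{C}^m$ by a direct-limit (Fatou--Bieberbach) argument. Since $F$ is an automorphism with an attracting fixed point at the origin, the theorem of Rosay--Rudin yields a global biholomorphism $\Psi:\mathbb{C}^m\to\mathbb{C}^m$ conjugating $F$ to a polynomial normal form $G=\Psi\circ F\circ\Psi^{-1}$; in particular $\Omega_F=\mathbb{C}^m$. Replacing each $f_n$ by $\Psi\circ f_n\circ\Psi^{-1}$, one may assume $F=G$, and the hypothesis becomes $\|f_n-G\|_U<\epsilon'$ on a fixed polydisc $U$ around the origin, with $\epsilon'$ arbitrarily small.

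Set $g_n=f_{n-1}\circ\cdots\circ f_0$ and choose $r>0$ so small that $G(\mathbb{B}_r)\subset\mathbb{B}_r$ and $f_n(\mathbb{B}_r)\subset\mathbb{B}_r$ for every $n$. Letting $U_n=g_n^{-1}(\mathbb{B}_r)$, we have $\Omega_{(f_n)}=\bigcup_n U_n$ with $U_n\subset U_{n+1}$. I would define candidate conjugations
\[
\Phi_n\colon U_n\longrightarrow\mathbb{C}^m,\qquad \Phi_n(z)=G^{-n}\bigl(g_n(z)\bigr),
\]
so that in the unperturbed case $\Phi_n=\Id$. The goal is to show that for $\epsilon$ sufficiently small the sequence $(\Phi_n)$ converges locally uniformly on $\Omega_{(f_n)}$ to an injective holomorphic $\Phi$, and that $\bigcup_n\Phi_n(U_n)=\mathbb{C}^m$; the latter exhausts $\mathbb{C}^m$ by the standard argument, since $G^{-n}(\mathbb{B}_r)$ already does.

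The crucial estimate is the summability of
\[
\Phi_n(z)-\Phi_{n-1}(z)=G^{-n}\bigl(f_{n-1}(g_{n-1}(z))\bigr)-G^{-n}\bigl(G(g_{n-1}(z))\bigr)
\]
on compact subsets of $\Omega_{(f_n)}$. Writing $f_n=G+\ell_n+h_n$ with $\ell_n$ linear and $h_n=O(\|z\|^2)$, the quadratic remainder $h_n$ is tamed by the contraction of $g_{n-1}$ together with the at worst polynomial growth of $G^{-n}$ in the relevant region. The real obstacle is the linear correction $\ell_n$: even though $\|\ell_n\|=O(\epsilon)$, after $n$ iterations and the application of $G^{-n}$ it can, a priori, be amplified into a non-summable contribution.

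To overcome this I would perform a preliminary adaptive normalization: inductively choose biholomorphisms $h_n$ close to the identity and fixing the origin so that $\hat f_n:=h_{n+1}\circ f_n\circ h_n^{-1}$ has linear part exactly equal to $DG(0)$. Each $h_n$ is obtained by solving a linear cohomological equation at the $1$-jet level and satisfies $\|h_n-\Id\|_{\mathbb{B}_r}=O(\epsilon D^n)$, so the compositions converge on a neighborhood of $0$; this adaptive device is precisely the kind of step that Lemma \ref{lemma:dominant} formalizes. Once the linear parts have been normalized, the telescoping series for $\Phi_n-\Phi_{n-1}$ is summable, producing an injective holomorphic limit $\Phi\colon\Omega_{(f_n)}\to\mathbb{C}^m$, and the Fatou--Bieberbach exhaustion then promotes $\Phi$ to a biholomorphism. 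The main obstacle I expect to spend real work on is precisely this balancing act between the non-autonomous Koenigs limit defining $\Phi_n$ and the adaptive killing of the linear perturbations $\ell_n$.
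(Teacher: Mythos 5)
Your overall framework (conjugate $F$ to a Rosay--Rudin normal form $G$, build $\Phi_n$ as a non-autonomous Koenigs limit, exhaust $\mathbb C^m$) matches the spirit of the argument, and you have correctly identified the linear perturbations $\ell_n$ as the real difficulty. However, the step you propose to overcome it does not work, and the failure is not a technicality.

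You claim one can find biholomorphisms $h_n$ fixing $0$, with $\|h_n-\mathrm{Id}\|_{\mathbb B_r}=O(\epsilon D^n)$, such that $\hat f_n = h_{n+1}\circ f_n\circ h_n^{-1}$ has linear part exactly $DG(0)$. This is impossible. If $\|h_n-\mathrm{Id}\|\to 0$ then $D\hat f_n(0)\to Df_n(0)$, which is $\epsilon$-close to but in general different from $DG(0)$, a contradiction. More fundamentally, solving $L_{n+1} A_n L_n^{-1}=B$ for the linear parts forces $L_n = B^n(A_{n-1}\cdots A_0)^{-1}$, and there is no choice of $L_0$ keeping this bounded: in the one-variable model $F(z)=\lambda z$, $f_n(z)=\lambda_n z$ with, say, every $\lambda_n=\lambda-\tfrac{\epsilon}{2}$, one gets $L_n = L_0\,(\lambda/(\lambda-\tfrac{\epsilon}{2}))^n\to\infty$. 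So the exact normalization of the $1$-jet to a \emph{constant} linear part cannot be achieved by a uniformly bounded non-autonomous conjugation, let alone one that is $O(\epsilon D^n)$-close to the identity. Since this step is the one you flag as doing ``the real work,'' the proposal as written has a genuine gap.

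The actual argument in \cite{Peters} sidesteps this. One does \emph{not} try to fix a single linear model. Instead, after conjugating $F$ to a lower triangular polynomial $G$, one uses a non-autonomous \emph{unitary} conjugation (via $QR$-factorization of the cocycle $D f_{n,0}(0)$) to make each $D f_n(0)$ lower triangular with a controlled ordering of the diagonal entries; unitarity keeps this step automatically bounded. One then solves a non-autonomous cohomological equation order by order, as in Lemma~\ref{lemma:dominant}, to conjugate the whole sequence to lower triangular \emph{polynomial} maps $g_n$ of bounded degree. The point is that the linear parts of the $g_n$ are allowed to vary with $n$; what is fixed is the triangular shape and the degree, and the smallness of $\epsilon$ is used to guarantee the contraction in the affine recursion for the conjugating coefficients (analogous to the $|b_n|^2<\xi|a_n|$ bound), not to pin the $1$-jet to $DG(0)$. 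Your telescoping-series strategy for $\Phi_n$ could then be run against this family $(g_n)$ rather than against the iterates of a single $G$.
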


The combination of Theorems \ref{thm:repeat} and \ref{thm:perturbations} provides a good idea for the techniques used to prove Theorems \ref{thm:main} and \ref{thm:general}: If, after applying suitable changes of coordinates, we can find sufficiently long stretches in the sequence $(f_n)$ that behave similarly, then we can show that the basin will be biholomorphic to $\mathbb C^m$. What is meant by \emph{behave similarly} will be made more precise in later sections.

\subsection{Applications}

The theorems described above have been used to prove a number of
results which are at first sight unrelated. The first example of
such an application is of course the classical result of Fatou and
Bieberbach which states that there exists a proper subdomain of
$\mathbb C^2$ which is biholomorphic to $\mathbb C^2$. Indeed, it
is not hard to find an automorphism of $\mathbb C^2$ with an
attracting fixed point, but whose basin of attraction is not equal
to the entire $\mathbb C^2$. Proper subdomains of $\mathbb C^2$
that are equivalent to $\mathbb C^2$ are now called
\emph{Fatou-Bieberbach} domains.

It should be no surprise that for the construction of Fatou-Bieberbach domains with specific properties, it is useful to work with non-autonomous basins. Working with sequences of maps gives much more freedom than working with a single automorphism. Using Theorem \ref{thm:wold} it is fairly easy to construct a sequence of automorphisms satisfying various global properties, while making sure that the attracting basin is equivalent to $\mathbb C^2$. We give two examples of results that have been obtained in this way.

\begin{thm}[Wold]
There exist countably many disjoint Fatou-Bieberbach domains in $\mathbb C^2$ whose union is dense.
\end{thm}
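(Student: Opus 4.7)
The plan is a diagonal inductive construction of disjoint Fatou--Bieberbach domains $U_0, U_1, \ldots$ whose union is dense. Fix a countable basis $\{V_n\}_{n \geq 0}$ of nonempty open subsets of $\mathbb{C}^2$ (for instance, open balls with rational centers and radii). Having constructed pairwise disjoint Fatou--Bieberbach domains $U_0, \ldots, U_{n-1}$, I will choose $U_n$ so that it is disjoint from all of them and meets $V_n$. The intersection condition, combined with the basis property, immediately forces $\bigcup_n U_n$ to be dense.

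Since the $U_j$ and $U_n$ are open, disjointness is equivalent to $U_n \cap \overline{U_j} = \emptyset$ for all $j < n$, so the target region at stage $n$ is the open set $W_n := V_n \setminus \bigcup_{j<n} \overline{U_j}$. To guarantee $W_n$ is nonempty, I arrange $\mathrm{diam}(U_j) < 2^{-j}$ at each earlier stage, so that a finite union of shrinking closures cannot fill any prescribed $V_n$.

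The core ingredient is the following existence statement, which reduces the theorem to a single local construction: \emph{for any nonempty open set $W \subset \mathbb{C}^2$ and any $\epsilon > 0$, there exists a Fatou--Bieberbach domain $U$ with $\overline{U} \subset W$ and $\mathrm{diam}(U) < \epsilon$.} Granted this, the induction closes trivially: at stage $n$, apply it to $W = W_n$ with $\epsilon = 2^{-n}$ to produce $U_n$.

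The proof of this existence statement is the main obstacle, and is where the non-autonomous basin theory of the paper enters. Pick $p \in W$ and a closed ball $\overline{B_0} \subset W$ with $\mathrm{diam}(B_0) < \epsilon/2$. Using Andersen--Lempert approximation, construct a sequence $(f_n)$ of automorphisms of $\mathbb{C}^2$ fixing $p$ such that (a) on $B_0$ each $f_n$ acts as a fixed linear contraction, so that the uniform attraction condition holds with constants $C, D$ satisfying $D^2 < C$; and (b) the behavior of $f_n$ outside $B_0$ becomes increasingly expansive as $n$ grows, so that the backward pullbacks $(f_n \circ \cdots \circ f_0)^{-1}(B_0)$ form a nested family whose union remains bounded inside $W$. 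Theorem \ref{thm:wold} (applied with $k = 2$) then yields $\Omega_{(f_n)} \cong \mathbb{C}^2$, and the confinement built into (b) gives $\overline{\Omega_{(f_n)}} \subset W$ with small diameter. The technical crux is that, although each $f_n$ is necessarily strictly contracting on $B_0$ and hence $f_n^{-1}(B_0) \supsetneq B_0$, the non-autonomous framework allows the increments in the backward pullbacks to decay summably in $n$, keeping the whole basin confined where a single autonomous automorphism would spread its basin across all of $\mathbb{C}^2$.
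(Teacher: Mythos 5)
The paper states this theorem without proof, citing Wold's article \cite{Wold}, so there is no in-paper proof to compare against; I assess your argument on its own terms.

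Your overall scheme is sound and, to the best of my knowledge, of the same flavour as Wold's: reduce the theorem to the claim that every nonempty open set $W \subset \mathbb{C}^2$ contains a Fatou--Bieberbach domain $U$ with $\overline{U} \subset W$ of small diameter, and then run a countable induction against a basis. However, there are two gaps.

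First, the diameter argument does not do what you claim. It is simply false that ``a finite union of shrinking closures cannot fill any prescribed $V_n$'': a finite union of closed balls of total size less than some fixed bound can perfectly well contain a small basis ball $V_n$, and since the $V_n$ have arbitrarily small volume, no a priori choice of $\mathrm{diam}(U_j) < 2^{-j}$ rules this out. The correct reason $W_n \neq \emptyset$ is topological and requires a case split: if $V_n$ already meets some $U_j$ with $j<n$, you simply skip (density is already secured there); if $V_n \cap U_j = \emptyset$ for all $j<n$, then $V_n \subset \bigcup_{j<n}\overline{U_j}$ would force $V_n \subset \bigcup_{j<n}\partial U_j$, a finite union of boundaries of open sets, hence nowhere dense by Baire --- contradicting that $V_n$ is open and nonempty. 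That argument, not the diameter bound, is what makes the induction close. (The diameter bound is still useful, but only to keep the $\overline{U_j}$ compact and small; it plays no role in showing $W_n \neq \emptyset$.)

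Second, the construction of a small Fatou--Bieberbach domain inside $W$ is the real technical content, and your sketch is too vague to be checked. You assert via Andersen--Lempert that one can build $(f_n)$ fixing $p$, agreeing with a fixed linear contraction on $B_0$, and ``increasingly expansive'' outside $B_0$ so that the pullbacks $f_{n,0}^{-1}(B_0)$ stay inside $W$ with bounded diameter. This is plausible, but none of the following is established: that such automorphisms exist globally on $\mathbb{C}^2$ (Andersen--Lempert gives approximation, not exact prescription, and the approximation error propagates through the iterated inverses), that the uniform bounds $C\|z\|\le\|f_n(z)\|\le D\|z\|$ with $D^2<C$ persist on a \emph{uniform} neighbourhood of the origin after these modifications, and that the resulting basin is actually bounded and contained in $W$ rather than escaping through the region between $B_0$ and $\partial W$ as $n$ increases. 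The assertion that ``the increments in the backward pullbacks decay summably'' is exactly the thing one would have to prove, and it does not follow automatically from ``expansive outside $B_0$.'' This is where you would need to reproduce Wold's actual estimates, and as written the argument is a heuristic rather than a proof.
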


The question of whether such Fatou-Bieberbach domains exist was raised by Rosay and Rudin in \cite{RR}.

\begin{thm}[Peters-Wold]
There exists a Fatou-Bieberbach domain $\Omega$ in $\mathbb C^2$ whose boundary has Hausdorff dimension $4$, and positive Lebesgue measure.
\end{thm}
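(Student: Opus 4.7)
The proof constructs $\Omega$ as the attracting basin of a uniformly attracting sequence $(f_n)$ of automorphisms of $\mathbb{C}^2$, tuned so that $D^2 < C$. By Theorem~\ref{thm:wold}, this already gives $\Omega \cong \mathbb{C}^2$ for free, so the content of the argument lies entirely in arranging the maps so that $\partial \Omega$ has positive four-dimensional Lebesgue measure; Hausdorff dimension $4$ then follows automatically, since the ambient space is real four-dimensional.

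Fix once and for all a compact set $K \subset \mathbb{C}^2$ of positive Lebesgue measure, for instance a product of two fat Cantor sets. The plan is to build $(f_n)$ so that $K \subset \partial \Omega$. Start from a strong diagonal contraction $L(z,w) = (\lambda z, \lambda w)$ with $\lambda$ small enough that $D^2 < C$ has room to spare, and write $f_n = \Phi_n \circ L$, where each $\Phi_n$ is an automorphism of $\mathbb{C}^2$ produced by Andersén--Lempert approximation, equal to the identity on a shrinking neighborhood of the origin (so uniform contraction is preserved) but performing a prescribed deformation far from $0$. Enumerate a countable dense subset $\{p_j\}\subset K$ together with a countable family of witness points $\{q_j\}$ slightly displaced from them and meant to witness attraction. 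Writing $F_n := f_{n-1}\circ\cdots\circ f_0$, at stage $n$ choose $\Phi_n$ so that (a) $F_{n+1}(p_n)$ lands outside a large ball from which no subsequent iterate can return to $0$; (b) $F_{n+1}(q_n)$ lies well inside the unit ball, so $q_n\in\Omega$; (c) the bounds \eqref{eq:uniform} persist; (d) the decisions at earlier stages are not undone.

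Once the induction converges, every $p_j$ lies in $\mathbb{C}^2\setminus\Omega$, and since $\Omega$ is open and $\{p_j\}$ is dense in $K$, we have $K\cap\Omega=\emptyset$. Meanwhile every $q_j$ is in $\Omega$, and by the density of $\{q_j\}$ around $K$ each point of $K$ is an accumulation point of $\Omega$, so $K\subset\overline{\Omega}$. Combining these two inclusions gives $K\subset\partial\Omega$, whence $\partial\Omega$ carries positive four-dimensional Lebesgue measure.

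The main obstacle is the tension at each inductive stage: one needs a substantial deformation $\Phi_n$ to cut $F_n(p_n)$ off from the basin, yet $\Phi_n$ must be small enough in the right norms that the uniform contraction near $0$ and the previously arranged ejections and attractions of $\{p_k\},\{q_k\}$ for $k<n$ are preserved. This is handled by supporting $\Phi_n$ in a narrow spherical shell located exactly where $L(F_n(p_n))$ sits at stage $n$, using Andersén--Lempert theory to realize the compactly supported deformation as a global automorphism of $\mathbb{C}^2$, and running the induction with a summable error budget $\sum\epsilon_n<\infty$ so that all competing requirements are simultaneously achievable.
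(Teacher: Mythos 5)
The theorem you are proving is only cited in the paper (to~[PW]); no proof appears there, so there is no internal argument to compare against. On its own merits, the broad strategy you describe---realize $\Omega$ as a non-autonomous basin with $D^2<C$ so Theorem~\ref{thm:wold} gives $\Omega\cong\mathbb{C}^2$, choose a compact $K$ of positive $4$-dimensional Lebesgue measure and empty interior (a product of four, not two, fat Cantor sets, since the ambient real dimension is~$4$), keep $K$ out of the basin while letting arbitrarily nearby points fall in, use Andersén--Lempert for the per-stage automorphisms---is the right template.

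However, the inductive scheme as written has a genuine gap. First, ``identity on a \emph{shrinking} neighborhood of the origin'' does not preserve the bounds~\eqref{eq:uniform}: for $C\|z\|\le\|\Phi_n(L(z))\|\le D\|z\|$ to hold on all of $\mathbb{B}$ you need $\Phi_n$ close to the identity on all of $\overline{\mathbb{B}(0,\lambda)}=L(\overline{\mathbb{B}})$, a \emph{fixed} neighborhood independent of~$n$. Second, and more seriously, for $\Phi_n$ to eject $F_n(p_n)$ you need $L(F_n(p_n))$ to still lie outside that fixed region where $\Phi_n$ is forced to be close to the identity; but $f_0,\dots,f_{n-1}$ were chosen before $p_n$ entered the picture, so unless $p_n$ was already shadowing some earlier $p_j$, the orbit of $p_n$ has been contracted deep into $\mathbb{B}(0,\lambda)$ long before stage~$n$ and can no longer be ejected. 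You would need a quantitative density claim ($\{p_0,\dots,p_{n-1}\}$ already $\epsilon_n$-dense in $K$ with $\epsilon_n$ decaying fast against $\lambda^n$), or, more cleanly, restructure the induction to keep the image of the entire compact set $K$ out of $\overline{\mathbb{B}}$ at every stage while pushing a fixed large ball minus a shrinking tubular neighborhood of $K$ into the basin---this handles all points of $K$ simultaneously and dissolves the cross-stage conflict. Finally, ``lands outside a large ball from which no subsequent iterate can return to $0$'' is not a one-time achievement: $L$ contracts everything, so every ejected orbit must be actively re-ejected at every subsequent stage, one more demand the sketch does not reconcile with the others.
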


Here we note that Fatou-Bieberbach domains with Hausdorff dimension equal to to any $h \in (3,4)$ were constructed by Wolf \cite{Wolf} using autonomous attracting basins. Hausdorff dimension $3$ (and in fact $C^\infty$-boundary) was obtained earlier by Stens{\o}nes in \cite{Stensones}, who also used an iterative procedure involving a sequence of automorphisms of $\mathbb C^2$.

The last application we would like to mention is the Loewner partial differential equation. The link with non-autonomous attracting basins was made in \cite{Arosio}, where Arosio used a construction due to Forn{\ae}ss and Stens{\o}nes (see Theorem \ref{thm:FS} below) to prove the existence of solutions to the Loewner PDE. See also \cite{ABW} for the relationship between the Loewner PDE and non-autonomous attracting basins.

\section{Techniques}

\subsection{The autonomous case}

Essentially the only available method for proving that a domain $\Omega$ is equivalent to $\mathbb C^m$, is by explicitly constructing a biholomorphic map from $\Omega$ to $\mathbb C^m$. Let us first review how this is done for autonomous basins, before we look at how this proof can be adapted to the non-autonomous setting. We follow the proof in the appendix of \cite{RR}, but see also the survey \cite{Berteloot} written by Berteloot.

Given an automorphism $F$ of $\mathbb C^m$ with an attracting fixed point at the origin, one can find  a \emph{lower triangular polynomial map} $G$, and for any $k \in \mathbb N$, a polynomial map $X_k$ of the form $X_k = \mathrm{Id} + h.o.t.$, biholomorphic in a neighborhood of the origin, such that
\begin{equation}\label{eq:conjugate}
X_k \circ F = G \circ X_k + O(\|z\|^{k+1}).
\end{equation}
Here a polynomial map $G = (G_1, \ldots , G_m)$ is called \emph{lower triangular} if
$$
G_i = \lambda_i z_i + H_i(z_1, \ldots z_{i-1})
$$
for $i= 1, \ldots , m$. Now consider the sequence of holomorphic maps  from $\Omega_F$ to $\mathbb C^m$ given by
$$
\Phi_n = G^{-n} \circ X_k\circ  F^n.
$$
Important here is that the lower triangular polynomial maps behave very similarly to linear maps. For example, it follows immediately by induction on $m$ that the degrees of the iterates $G^n$ are uniformly bounded. One can also easily see that the basin of attraction of a lower triangular map with an attracting fixed point at the origin is always equal to the whole set $\mathbb C^m$.

\begin{lemma}
Let $G$ be a lower triangular. Then there exist a $\beta>0$ such that
$$
\|G^{-1}(z) - G^{-1}(w)\| \le \beta \|z-w\|
$$
for all $z, w \in \mathbb B$.
\end{lemma}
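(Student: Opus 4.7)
The plan is to show that the inverse $G^{-1}$ is itself a lower triangular polynomial map, so that the Lipschitz estimate on the ball $\mathbb{B}$ reduces to a uniform bound on the derivative of a polynomial over a bounded set.

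First I would construct $G^{-1}$ explicitly by induction on the coordinate. Writing $w = G(z)$ coordinate-wise, the first equation $w_1 = \lambda_1 z_1$ immediately yields $z_1 = w_1/\lambda_1$. Assuming $z_1, \ldots, z_{i-1}$ have been expressed as polynomials $P_1(w_1, \ldots, w_{i-1}), \ldots, P_{i-1}(w_1, \ldots, w_{i-1})$, the equation $w_i = \lambda_i z_i + H_i(z_1, \ldots, z_{i-1})$ gives
\[
z_i = \frac{1}{\lambda_i}\bigl(w_i - H_i(P_1(w), \ldots, P_{i-1}(w))\bigr),
\]
which is polynomial in $w_1, \ldots, w_i$. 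In particular $G^{-1}$ is everywhere defined, polynomial, and lower triangular.

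Once this is in place, the Lipschitz bound is routine. Since $G^{-1}$ is a polynomial map, its complex Jacobian $DG^{-1}$ is a matrix of polynomials and is therefore bounded in operator norm on the compact set $\overline{\mathbb B}$. Let
\[
\beta = \sup_{z \in \overline{\mathbb B}} \|DG^{-1}(z)\|.
\]
Since $\mathbb{B}$ is convex, for any $z, w \in \mathbb{B}$ the line segment $[z,w]$ lies in $\mathbb{B}$, and the mean value inequality yields
\[
\|G^{-1}(z) - G^{-1}(w)\| \le \beta\, \|z - w\|.
\]

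There is essentially no obstacle here; the only step requiring care is the inductive construction of $G^{-1}$, and even that is a direct unwinding of the lower triangular form. The lemma is really just a convenient packaging of the fact that lower triangular maps behave, up to polynomial complexity, like linear maps, which is the theme emphasized in the preceding paragraph of the paper.
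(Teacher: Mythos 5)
Your proof is correct and complete. The paper states this lemma without proof, treating it as elementary; your argument fills in exactly the details one would expect: the inductive coordinate-by-coordinate inversion shows $G^{-1}$ is itself a lower triangular polynomial map (implicitly using that invertibility forces each $\lambda_i \neq 0$), and then boundedness of $DG^{-1}$ on $\overline{\mathbb B}$ together with convexity of $\mathbb B$ and the mean value inequality gives the Lipschitz bound.
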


Now let $D>0$ be such that $\|F(z)\|< D\|z\|$ for $z \in \mathbb B$. Then there exists a $k \in \mathbb N$ such that $D^{k+1}\cdot \beta < 0$. It follows from Equation \eqref{eq:conjugate} that, with this choice of $k$, the maps $\Phi_n$ converge, uniformly on compact subsets of $\Omega_F$, to a biholomorphic map from $\Omega_F$ to $\mathbb C^m$.

While the proof outlined above works elegantly, in the non-autonomous case it will often not be easy to control the maps $X_k$ for arbitrarily high $k$. However, it turns out that with a little more care it is sufficient to work with a much lower $k$, that really only depends on the eigenvalues of $DF(0)$. Let $0<C<D<1$ be such that
$$
C\|z\| < \|F(z)\| \le D\|z\|,
$$
and let $k$ be such that $D^{k+1} < C$. It turns out that it is
sufficient to work with the maps $X_k$. To see this, let
$C^\prime<C$ be such that $D^{k+1} < C^\prime$. Then there exist
an $r>0$ sufficiently small such that for $z, w \in
\mathbb{B}(0,r)$ one has
$$
\|G^{-1}(z) - G^{-1}(w)\| \le (C^\prime)^{-1}\|z-w\|.
$$
Let $K$ be a relatively compact subset of the attracting basin.
Then there exists an $N$ such that $F^N(K) \subset
\mathbb{B}(0,\frac{r}{2})$. If $r$ is chosen sufficiently small
then we easily see that for any $m \ge N$ and $j \le m-N$ we have
that
$$
G^{-j}\circ X_k \circ F^m(K) \subset \mathbb{B}(0,r).
$$
It follows that
$$
\|\Phi_{n+1} - \Phi_n\|_K \le \beta^N {C^{\prime}}^{-n+N}D^{(n-N)(k+1)},
$$
which is summable over $n$. Hence the sequence $(\Phi_n)$ forms a
Cauchy sequence and converges to a limit map $\Phi$. Since
$D\Phi_n(0) = \mathrm{Id}$ for all $n$, it follows that $\Phi$ is
biholomorphic onto its image. To prove the surjectivity of $\Phi$
one shows that for each $\mathbb{B}(R) \subset \mathbb C^m$ there
exists a compact $K \subset \Omega_F$ such that $\Phi_n(K) \supset
\mathbb{B}(R)$ for all $n \in \mathbb{N}$. The fact that $\Phi(K)
\supset \mathbb{B}(R)$ follows since we are dealing with open
maps.

\subsection{Non-autonomous conjugation}

In the non-autonomous setting it is very rare that a single change of coordinates simplifies the sequence of maps. Instead we use a sequence of coordinate changes.

\begin{lemma}\label{lemma:conjugation}
Let $(f_n)$ be a sequence of automorphisms that satisfies the hypotheses of Conjecture \ref{conj:main}, and suppose that there exist uniformly bounded sequences $(g_n)$ and $(h_n)$, with $h_n = \mathrm{Id} + h.o.t.$, such that the diagram
\begin{equation}\label{diagram}
\begin{CD}
\mathbb{C}^m @>f_0>> \mathbb{C}^m @>f_1>> \mathbb{C}^m @>f_2>> \cdots\\
@VVh_0V @VVh_1V @VVh_2V\\
\mathbb{C}^m @>g_0>> \mathbb{C}^m @>g_1>> \mathbb{C}^m @>g_2>> \cdots
\end{CD}
\end{equation}
commutes as germs of order $k$. Then $\Omega_{(f_n)} \cong \Omega_{(g_n)}$.
\end{lemma}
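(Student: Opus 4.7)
The plan is to mimic the Rosay--Rudin conjugation argument non-autonomously. Write $F_n = f_{n-1}\circ\cdots\circ f_0$ and $G_n = g_{n-1}\circ\cdots\circ g_0$, with $F_0 = G_0 = \mathrm{Id}$, and define on $\Omega_{(f_n)}$ the sequence
\[
\Phi_n \;=\; G_n^{-1}\circ h_n\circ F_n.
\]
I would then show that $(\Phi_n)$ converges uniformly on compact subsets of $\Omega_{(f_n)}$ to a biholomorphism $\Phi\colon\Omega_{(f_n)}\to \Omega_{(g_n)}$.

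First I would check that $\Phi_n$ is well defined on compacta: for a compact $K\subset\Omega_{(f_n)}$, by uniform attraction one finds $N$ with $F_n(K)\subset\mathbb{B}(0,r)$ for all $n\ge N$ and $r$ arbitrarily small. Since $h_n = \mathrm{Id} + h.o.t.$ is uniformly bounded, on $\mathbb{B}(0,r)$ each $h_n$ is a biholomorphism with derivative close to $\mathrm{Id}$, and each $g_n^{-1}$ is defined with a uniform Lipschitz constant $\beta$ (which one can take arbitrarily close to $C^{-1}$ on a small enough ball, since $(g_n)$ inherits uniform attraction from $(f_n)$ through the diagram). Next comes the core Cauchy estimate: using $h_{n+1}\circ f_n = g_n\circ h_n + O(\|\cdot\|^{k+1})$, write
\[
\Phi_{n+1}(z) \;=\; G_n^{-1}\!\left(g_n^{-1}\!\left(g_n(h_n(F_n(z))) + O(\|F_n(z)\|^{k+1})\right)\right),
\]
so that, using the Lipschitz bound on $G_n^{-1}$ (which grows at most like $\beta^n$) together with the geometric decay $\|F_n(z)\|\lesssim D^n$,
\[
\|\Phi_{n+1}(z)-\Phi_n(z)\|_K \;\lesssim\; \beta^n\cdot D^{n(k+1)}.
\]
Under the standing hypothesis $D^{k+1}<C$ (or, more precisely, $\beta\,D^{k+1}<1$) this is summable, so $\Phi_n$ forms a Cauchy sequence and converges to a holomorphic limit $\Phi$.

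Having produced $\Phi$, I would verify three properties in turn. (i) The image lies in $\Omega_{(g_n)}$: for $m\ge n$, $G_m(\Phi_n(z)) = g_{m-1}\circ\cdots\circ g_n(h_n(F_n(z)))$, which tends to $0$ because $h_n(F_n(z))\to 0$ and $(g_n)$ is uniformly attracting near the origin. (ii) $\Phi$ is a biholomorphism onto its image: since $D_0 h_n = \mathrm{Id}$ and $DG_n(0) = DF_n(0)\cdot(\mathrm{Id} + o(1))$, the derivatives $D_0\Phi_n$ stabilize to an invertible map, so injectivity follows from Hurwitz once $\Phi_n$ converges. (iii) Surjectivity onto $\Omega_{(g_n)}$: here I would run the symmetric construction on $\Omega_{(g_n)}$, namely $\Psi_n = F_n^{-1}\circ h_n^{-1}\circ G_n$, and show by the analogous estimate that $\Psi_n\to\Psi\colon\Omega_{(g_n)}\to\Omega_{(f_n)}$. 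Since by construction $\Psi_n\circ\Phi_n = \mathrm{Id}$ wherever both sides are defined, passing to the limit yields $\Psi\circ\Phi = \mathrm{Id}$, and similarly $\Phi\circ\Psi = \mathrm{Id}$.

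The principal obstacle is the Cauchy estimate, which requires a uniform Lipschitz control on $g_n^{-1}$ on a small ball whose radius does not collapse as $n$ grows. Ensuring this amounts to proving that $(g_n)$ is itself uniformly attracting on some ball $\mathbb{B}(0,r)$ with the \emph{same} constants $C,D$ up to an arbitrarily small loss; this is where the hypothesis $h_n = \mathrm{Id} + h.o.t.$ together with the uniform boundedness of $(h_n)$ and $(g_n)$ is essential. Once this is in place, the summability of the geometric series $\sum\beta^n D^{n(k+1)}$ makes everything else routine.
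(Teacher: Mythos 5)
The paper does not actually supply a proof of this lemma; it is stated as a standard adaptation of the Rosay--Rudin telescoping technique outlined in Section~3.1 for the autonomous case (and carried out in detail, with worse constants, in Step~3 of the proof of Theorem~\ref{thm:general}). Your plan is exactly that adaptation --- defining $\Phi_n = G_n^{-1}\circ h_n\circ F_n$, establishing a Cauchy estimate of the form $\|\Phi_{n+1}-\Phi_n\|_K \lesssim \beta^n D^{n(k+1)}$ from the order-$k$ commutativity, and deducing biholomorphy via Hurwitz --- so you are on the intended route.

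Two small remarks. First, you are right that the summability requires the hypothesis $D^{k+1}<C$; although it is not written in the lemma's statement, it is present in every invocation of the lemma in the paper and should be treated as standing. Second, your observation ``$DG_n(0)=DF_n(0)\cdot(\mathrm{Id}+o(1))$'' can be sharpened: since $Dh_n(0)=\mathrm{Id}$, comparing the linear parts of $g_n\circ h_n$ and $h_{n+1}\circ f_n$ in the order-$k$ commuting diagram gives $Dg_n(0)=Df_n(0)$ \emph{exactly}, hence $DG_n(0)=DF_n(0)$ and $D\Phi_n(0)=\mathrm{Id}$ for every $n$; this is the cleanest input to the Hurwitz step. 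For surjectivity, your symmetric construction $\Psi_n = F_n^{-1}\circ h_n^{-1}\circ G_n$ works once you establish (as you correctly flag) that $(g_n)$ inherits a uniform lower contraction bound $C'\|z\|\le\|g_n(z)\|$ on a fixed small ball with $C'$ arbitrarily close to $C$, which follows from $h_n=\mathrm{Id}+h.o.t.$ with uniformly bounded higher-order terms; alternatively, the autonomous template of showing $\Phi_n(K)\supset\mathbb{B}(0,R)$ via the open mapping theorem, as in Section~3.1 and Lemma~\ref{lemma:h2}, yields the same conclusion and avoids passing limits through the composition $\Psi_n\circ\Phi_n$.
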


If the maps $(g_n)$ are all lower triangular polynomials then one still has that  the basin of the sequence $(g_n)$ is equal to $\mathbb C^m$. This simple fact was used in \cite{Peters} to prove the following lemma, which for simplicity we state in the case $m = 2$.

\begin{lemma}\label{lemma:dominant}
Let $(f_n)$ be a sequence of automorphisms satisfying the conditions in Conjecture \ref{conj:main}, and suppose that the linear part of each map $f_n$ is of the form
$$
(z, w) \mapsto (a_n z, b_n w + c_n z),
$$
with $|b_n|^2 < \xi |a_n|$ for some uniform constant $\xi < 1$. Then we can find bounded sequences $(g_n)$ and $(h_n)$ as in Lemma \ref{lemma:conjugation}. Moreover, the maps $g_n$ can be chosen to be lower triangular polynomials, and hence $\Omega_{(f_n)} \cong \mathbb C^2$.
\end{lemma}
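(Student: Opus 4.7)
The plan is to construct the required sequences $(h_n)$ and $(g_n)$ by solving a cascade of homological equations, proceeding by induction on the degree. I take each $h_n$ of the form $\mathrm{Id} +$ (polynomial of degrees $2$ through $k$), and each $g_n$ lower triangular with first component $(z,w) \mapsto a_n z$ and second component $(z,w) \mapsto b_n w + c_n z + \sum_{i=2}^k \beta_n^{(i)} z^i$. I impose the identity $h_{n+1} \circ f_n = g_n \circ h_n$ modulo terms of order $k+1$ as required by Lemma \ref{lemma:conjugation}; the identity at degree $1$ holds by construction.

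Suppose inductively that the identity holds modulo degree $d$, for some $2 \le d \le k$. At each monomial $z^i w^j$ with $i+j = d$ and each component $\ell \in \{1,2\}$, matching coefficients of the two sides produces a scalar homological equation. Writing $\alpha_{ij}^{(\ell),n}$ for the coefficient of $z^i w^j$ in the $\ell$-th component of $h_n$, the first component reads schematically
\begin{equation*}
a_n^i b_n^j\, \alpha_{ij}^{(1),n+1} - a_n\, \alpha_{ij}^{(1),n} = R_{ij}^{(1),n},
\end{equation*}
where $R_{ij}^{(1),n}$ gathers contributions from previously determined coefficients (of lower degree, and at the same degree with fewer $z$-factors coming from the $c_n z$ off-diagonal term). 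The second component is analogous with $a_n$ on the right replaced by $b_n$, except that for the pure $z$-monomials $z^i$ with $j=0$ and $i \ge 2$ the free coefficient $\beta_n^{(i)}$ also appears; there I adopt the additional convention $\alpha_{i0}^{(2),n} \equiv 0$ and simply define $\beta_n^{(i)}$ to balance the equation.

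The key observation is that the dominance hypothesis $|b_n|^2 < \xi|a_n|$ makes every recursion multiplier uniformly contracting. In the first component the multiplier is $|a_n^{i-1} b_n^j|$: for $i \ge 1$ it is at most $D < 1$, while for $i = 0, j \ge 2$ it equals $|b_n|^{j-2} \cdot |b_n^2/a_n| \le \xi D^{j-2} \le \xi$. In the second component with $j \ge 1$ the multiplier $|a_n^i b_n^{j-1}|$ is likewise bounded by $D$. Imposing the boundary condition $\alpha_{ij}^{(\ell),n} \to 0$ as $n \to \infty$, each of these linear first-order recursions is solved explicitly by a convergent geometric sum that depends continuously on the data $R$, producing a uniformly bounded sequence whenever $R$ is uniformly bounded. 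For the non-removable pure $z$-monomials, the prescription $\alpha_{i0}^{(2),n} \equiv 0$ leaves $\beta_n^{(i)}$ as a bounded combination of the known data. Carrying this out through degrees $d = 2, 3, \ldots, k$ supplies the desired uniformly bounded sequences $(h_n)$ and $(g_n)$.

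Given these sequences, Lemma \ref{lemma:conjugation} yields $\Omega_{(f_n)} \cong \Omega_{(g_n)}$. Because each $g_n$ is lower triangular with $|a_n|, |b_n| \le D < 1$ and uniformly bounded polynomial tail, a direct orbit argument shows $\Omega_{(g_n)} = \mathbb{C}^2$: along every orbit the first coordinate contracts geometrically, so the external forcing $c_n z_n + \sum_i \beta_n^{(i)} z_n^i$ tends to zero, after which the contraction $|b_n| \le D$ forces the second coordinate to zero as well. The main technical obstacle is the bookkeeping that ensures uniform boundedness of the correction terms $R_{ij}^{(\ell),n}$ throughout the induction, which follows from the inductive boundedness of lower-degree coefficients together with the uniform boundedness of $(a_n)$, $(b_n)$, $(c_n)$.
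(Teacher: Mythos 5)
Your proposal is correct and uses essentially the same mechanism the paper appeals to for this lemma (the paper cites \cite{Peters} for the full proof but illustrates the construction when outlining Theorem~\ref{thm:triangulization}): conjugate with tangent-to-identity polynomial maps $h_n$, match coefficients to obtain first-order affine recursions in $n$, observe that the dominance hypothesis makes every homological multiplier uniformly contracting, and take the unique bounded orbit of each recursion. Your bookkeeping differs slightly from the paper's illustration in that you run a single cascade through all degrees $2,\dots,k$ and all monomials at once, whereas the paper first pre-normalizes the $f_n$ so that only the degree-$k$ terms $c_n w^k$ and $d_n z^k$ survive and then solves one scalar recursion for $\alpha_n$; both organizations are sound, and yours has the minor advantage of not separating out a preliminary normalization step.

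Two small remarks. First, the boundary condition you impose is best stated as ``$\alpha_{ij}^{(\ell),n}$ stays bounded'' rather than ``$\alpha_{ij}^{(\ell),n}\to 0$''; the forward geometric sum $\alpha_n=\sum_{m\ge n}\bigl(\prod_{\ell=n}^{m-1}\mu_\ell\bigr)R_m$ is the unique bounded solution of $\alpha_n=\mu_n\alpha_{n+1}+R_n$ when $|\mu_n|\le\mu<1$, but it need not tend to zero unless $R_n$ does. Second, for completeness you should make explicit the intra-degree ordering (decreasing $j$, so that the $c_n z$ corrections feed only from already-solved monomials), but this is implicit in your phrase ``previously determined.'' Neither point affects correctness.
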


As was pointed out in \cite{Peters}, we can always find a non-autonomous change of coordinates such that the linear parts of the maps $f_n$ all become lower triangular. Let us explain the technique in the $2$-dimensional setting, where a matrix is lower diagonal if and only if $[0,1]$ is an eigenvector. Using $QR$-factorization we can find, for any vector $v_0$, a sequence of unitary matrices $(U_n)$ such that $U_0v_0=[0,1]$ and
$$
U_{n+1} \cdot D(f_n \circ \cdots \circ f_0)(0) v_0 = \lambda_n \cdot [0,1],
$$
with $\lambda_n \in \mathbb C$. Then by defining $g_n = U_{n+1} \circ f_n \circ U_n^{-1}$, we obtain a new sequence $(g_n)$ whose basin is equivalent (by the biholomorphic map $U_0$) to the basin of the sequence $(f_n)$. Notice that the linear parts of all the maps $(g_n)$ are lower triangular. In this construction we are free to choose the initial tangent vector $v_0$.

But while we may always assume that the linear parts are lower triangular, the condition that $|b_n|^2 < \xi |a_n|$ for all $n \in \mathbb N$ in Lemma \ref{lemma:dominant} is a strong assumption. In particular there is no reason to think that one can change coordinates to obtain a sequence of lower triangular polynomial maps. Instead we could aim for obtaining lower triangular polynomials on sufficiently large time-intervals. Let us be more precise. Suppose that one can find an increasing sequence of integers $p_1, p_2, \ldots$ and change coordinates as in Lemma \ref{lemma:conjugation} to obtain that $\Omega_{(f_n)}$ is equivalent to the basin of a sequence
\begin{equation} \label{eq:AM}
g_0, \ldots ,g_{p_1-1}, U_1, g_{p_1}, \ldots , g_{p_2 - 1}, U_2, \ldots,
\end{equation}
where the $U_j$ are unitary matrices and the maps $g_n$ are all lower triangular. In the spirit of Theorem \ref{thm:repeat} one would expect that the basin of this new sequence is equal to $\mathbb C^2$ as long as the sequence $(p_j)$ is sparse enough. Indeed this is the case, as follows from the following Lemma, proved by Abbondandolo and Majer in \cite{AM}.

\begin{lemma}\label{lemma:sparse}
Suppose that the maps $g_n$ in the sequence given in Equation \eqref{eq:AM} are uniformly bounded lower triangular polynomials of degree $k$, and that
\begin{equation}\label{eq:infinitesum}
\sum \frac{p_{j+1} - p_j}{k^j} = +\infty.
\end{equation}
Then the basin of the sequence in Equation \ref{eq:AM} is equal to $\mathbb C^2.$
\end{lemma}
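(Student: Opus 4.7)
The plan is to group the maps between consecutive unitaries into blocks and reduce the statement to a scalar recursion for the norms at the block boundaries. Set $n_j:=p_{j+1}-p_j$ and
\[
F_j:=g_{p_{j+1}-1}\circ\cdots\circ g_{p_j},
\]
so \eqref{eq:AM} takes the form $F_0,U_1,F_1,U_2,\ldots$. A direct induction in dimension two shows that a composition of lower triangular polynomials of degree $\le k$ fixing the origin is again lower triangular of degree $\le k$: one has $F_j(z,w)=(\lambda_j z,\mu_j w+H_j(z))$ with $|\lambda_j|,|\mu_j|\le D^{n_j}$ and $H_j$ a polynomial of degree $\le k$ vanishing at $0$. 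Expanding the composition and using the uniform bound on the coefficients of the individual $g_n$, together with $|a_n|,|b_n|\le D<1$ and convergence of $\sum D^{i(k-1)}$, gives a uniform bound on the coefficients of $H_j$, up to at worst a factor linear in $n_j$ for the lowest-order term.

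Next I would derive two complementary norm estimates for $F_j$. Inside $\overline{\mathbb B}$ the uniform attraction gives $\|F_j(y)\|\le D^{n_j}\|y\|$. Outside $\mathbb B$, iterating the coordinatewise bounds
\[
|z_{n+1}|\le D|z_n|,\qquad |w_{n+1}|\le D|w_n|+M'\max(|z_n|,|z_n|^k)
\]
along the block, and splitting the telescoping sum at the first time $|z_n|$ drops below $1$, yields $\|F_j(y)\|\le \widetilde C(n_j+1)D^{n_j}\|y\|^k$ for $\|y\|\ge 1$. Because the unitary matrices preserve norms, the iterates $y_j$ at the block boundaries (time $p_j$, immediately after applying $U_j$) therefore satisfy
\[
r_{j+1}\le \widetilde C(n_j+1)D^{n_j}\max(r_j,1)^k,\qquad r_j:=\|y_j\|.
\]

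The final step is a one-dimensional log-scale iteration. Setting $u_j:=\log\max(r_j,1)\ge 0$ and $\alpha:=-\log D>0$ the recursion reads
\[
u_{j+1}\le \bigl(k u_j-\alpha n_j+\log((n_j+1)\widetilde C)\bigr)_+.
\]
Assuming for contradiction that $u_j>0$ for every $j$, dropping the positive part, dividing by $k^{j+1}$ and telescoping gives
\[
\frac{u_{j+1}}{k^{j+1}}\le u_0+\sum_{i=0}^{j}\frac{\log((n_i+1)\widetilde C)-\alpha n_i}{k^{i+1}}.
\]
Using $\log(n+1)\le n$ and splitting the sum according to whether $n_i$ exceeds a threshold where $\log n_i\le (\alpha/2)n_i$, the hypothesis $\sum n_i/k^i=+\infty$ forces the right-hand side to $-\infty$, contradicting $u_{j+1}\ge 0$. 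Hence $u_{j_0}=0$, i.e.\ $r_{j_0}\le 1$, for some $j_0$; from there on the contractive estimate yields geometric decay $r_j\to 0$, and the intermediate iterates within blocks $j\ge j_0$ remain in $\overline{\mathbb B}$ by the same bounds. Thus every $z\in\mathbb C^2$ lies in the basin, which equals $\mathbb C^2$.

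The main technical obstacle is the uniform coefficient bound for $H_j$: one must verify that the coefficients do not blow up after $n_j$ compositions despite each $g_n$ having only moderate coefficients. In dimension two the geometric decay factor $D$ at each step, combined with the lower-triangular structure, allows each coefficient of $H_j$ to be written as a convergent geometric sum in $D$. Once this is in place, the remainder of the argument is the scalar computation above, which converts the sparseness hypothesis directly into quantitative decay.
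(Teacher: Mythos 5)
The paper does not actually prove this lemma---it is attributed to Abbondandolo and Majer \cite{AM}---so there is no in-text argument to compare with, and your proof has to be judged on its own merits. Judged that way, it is correct. The block decomposition $F_j=g_{p_{j+1}-1}\circ\cdots\circ g_{p_j}$, the two complementary norm estimates (linear contraction by $D^{n_j}$ on $\overline{\mathbb B}$, and $\|F_j(y)\|\le \widetilde C(n_j+1)D^{n_j}\|y\|^k$ for $\|y\|\ge1$), the fact that the $U_j$ preserve norms, and the log-scale recursion $u_{j+1}\le (ku_j-\alpha n_j+\log((n_j+1)\widetilde C))_+$ with the divergence of $\sum n_j/k^j$ forcing $u_{j_0}=0$ for some $j_0$, together show that every orbit eventually enters $\overline{\mathbb B}$ and thereafter contracts geometrically to $0$.

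Two remarks. First, the inside estimate $\|F_j(y)\|\le D^{n_j}\|y\|$ on $\overline{\mathbb B}$ and the bounds $|a_n|,|b_n|\le D<1$ are not literal consequences of ``uniformly bounded lower triangular polynomials''; they hold here because the $g_n$ arise from the uniformly attracting $f_n$ by tangent-to-identity conjugation (so $Dg_n(0)=Df_n(0)$ up to unitaries, and $\|g_n(z)\|\le D'\|z\|$ with $D'<1$ on a fixed small ball, which one may rescale to the unit ball). This deserves to be said explicitly, since without it the inside estimate is false. Second, you single out a uniform coefficient bound for $H_j$ as the main technical obstacle, but the proof you sketch never uses it: the estimate for $\|y\|\ge1$ is obtained by iterating the coordinatewise inequalities $|z_{n+1}|\le D|z_n|$ and $|w_{n+1}|\le D|w_n|+M\max(|z_n|,|z_n|^k)$ directly, which is cleaner and sidesteps the coefficient computation entirely; that paragraph can simply be dropped. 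With these presentational adjustments the argument is sound, and the race it sets up between the degree-$k$ nonlinearity and the accumulated contraction $D^{n_j}$, governed by $\sum n_j/k^j=\infty$, is exactly the mechanism one expects behind the cited result.
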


The proof of Theorem \ref{thm:AM} from \cite{AM} can now be sketched as follows. Define $p_{j+1} = k^j + p_j$, and on each interval find a tangent vector $v_j$ which is contracted most rapidly by the maps $Df_{p_{j+1}, p_j}$. Next, find the non-autonomous change of coordinates by unitary matrices such that the maps $g_n$ as defined above all have lower triangular linear part. Then on each interval $I_j = [p_j, p_{j+1}]$ the maps $g_n$ satify the conditions of Lemma \ref{lemma:dominant} ``on average''. This is enough to find another non-autonomous change of coordinates after which the maps $g_n$ are lower triangular polynomial maps on each interval $I_j$. Then it follows from Lemma \ref{lemma:sparse} that the basin of the sequence $(f_n)$ is equivalent to $\mathbb C^2$.

Now we arrive at one of the main points presented in this article. In the argument of Abbondandolo and Majer the intervals $[p_j, p_{j+1}]$ are chosen without taking the maps $(f_n)$ into consideration; it is sufficient to make a simple choice such that Equation \eqref{eq:infinitesum} is satisfied. In the last section of this article we will show that we can obtain stronger results if we instead let the intervals $[p_j, p_{j+1}]$ depend on the maps $(f_n)$, or to be more precise, on the linear parts of the maps $(f_n)$.

\subsection{Abstract Basins}

Let us discuss a construction due to Forn{\ae}ss and Stens{\o}nes \cite{FS}. Let $(f_n)$ now be a sequence of biholomorphic maps from the unit ball $\mathbb B$ into $\mathbb B$, satisfying
$$
C \|z\| \le \|f_n(z)\| \le D\|z\|
$$
for some uniform $1>D>C>0$ as usual. We define the \emph{abstract basin of attraction} of the sequence $(f_n)$ as follows. Consider all sequences of the form
$$
(x_k, x_{k+1}, \ldots), \; \; \mathrm{with} \; \; x_{n+1} = f_n(x_n) \; \; \mathrm{for} \; \mathrm{all} \; n \ge k.
$$
We say that
$$
(x_k, x_{k+1}, \ldots) \sim (y_l, y_{l+1}, \ldots)
$$
if there exists a $j \ge \max(k,l)$ such that $x_j = y_j$. This gives an equivalence relation $\sim$, and we define
$$
\Omega_{(f_n)} = \left\{(x_k, x_{k+1}, \ldots) \mid f_n(x_n) = x_{n+1} \right\}/\sim
$$
We refer to $\Omega_{(f_n)}$ as the \emph{abstract basin of attraction}, sometimes also called the \emph{tail space}. We have now used the notation $\Omega_{(f_n)}$ for both abstract and non-autonomous basins, but thanks to the following lemma this will not cause any problems.

\begin{lemma}\label{lemma:abstract}
Let $(f_n)$ be a sequence of automorphisms of $\mathbb C^m$ which satisfy the conditions in Conjecture \ref{conj:main}. Then the basin of attraction of the sequence $(f_n)$ is equivalent to the abstract basin of the sequence $(f_n|_{\mathbb B})$.
\end{lemma}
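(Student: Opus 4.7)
The plan is to construct an explicit biholomorphism $\Phi \colon \Omega_{(f_n)} \to \Omega_{(f_n|_{\mathbb{B}})}$ by sending a point to (the equivalence class of) its forward orbit, once that orbit has entered the unit ball. Write $F_n = f_{n-1} \circ \cdots \circ f_0$ (with $F_0 = \mathrm{Id}$). For $z \in \Omega_{(f_n)}$, the definition of the basin gives some $N = N(z)$ with $F_n(z) \in \mathbb{B}$ for all $n \geq N$, so we may set
$$
\Phi(z) = \bigl[(F_N(z), F_{N+1}(z), F_{N+2}(z), \ldots)\bigr].
$$
Different valid choices of $N$ produce sequences that agree from some index on, so they are equivalent, and $\Phi$ is well-defined.

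Injectivity follows from the fact that each $f_n$ is an automorphism of $\mathbb{C}^m$: if $\Phi(z) = \Phi(z')$, then $F_j(z) = F_j(z')$ for some $j$, and since $F_j$ is a global biholomorphism we get $z = z'$. For surjectivity, given a representative $(x_k, x_{k+1}, \ldots)$ of a class in the abstract basin, put $z = F_k^{-1}(x_k)$; then $F_n(z) = x_n$ for all $n \geq k$, and the contraction estimate $\|x_{n+1}\| \leq D\|x_n\|$ on $\mathbb{B}$ yields $F_n(z) \to 0$, so $z \in \Omega_{(f_n)}$ and $\Phi(z)$ is the given class.

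For the biholomorphy statement, one first needs to equip $\Omega_{(f_n|_{\mathbb{B}})}$ with a complex structure. A natural atlas is indexed by pairs $(N,U)$ with $U \subset \mathbb{B}$ open: the chart is the map sending an equivalence class that admits a representative with $x_N \in U$ to $x_N$. Compatibility of two such charts $(N,U)$ and $(M,V)$ is a direct consequence of the fact that each $f_n$ is a biholomorphism of $\mathbb{B}$ onto its image, so the transition maps are compositions of the $f_n$'s and their local inverses, hence holomorphic. With this structure in place, $\Phi$ reads locally as $z \mapsto F_N(z)$, which is holomorphic, and its inverse reads locally as $F_N^{-1}$, which is also holomorphic; thus $\Phi$ is a biholomorphism.

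The main (and really only) subtlety is setting up the complex structure on the abstract basin and checking chart compatibility; everything else is a direct consequence of the fact that the $f_n$ are honest automorphisms of $\mathbb{C}^m$ and that the orbits contract into $\mathbb{B}$ at a uniform geometric rate. In particular no appeal to the stronger contraction ratios $D^{k+1} < C$ or $D^{11/5} < C$ of Theorems \ref{thm:main} and \ref{thm:general} is needed.
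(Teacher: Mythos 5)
The paper states Lemma \ref{lemma:abstract} without giving a proof---it is treated as a standard consequence of the Forn{\ae}ss--Stens{\o}nes tail-space construction in \cite{FS}---so there is no in-paper argument to compare against. Your proof is correct and is exactly the natural argument one would expect here: send a point of $\Omega_{(f_n)}$ to the class of its forward orbit once it has entered $\mathbb{B}$. The verifications of well-definedness, injectivity, and surjectivity are right, and you correctly isolate the one real subtlety, namely the holomorphic structure on the tail space; the chart compatibility hinges on the $f_n|_{\mathbb{B}}$ being injective holomorphic maps, so that forward transition maps are iterates of the $f_n$ and backward transition maps are local inverses (defined on the relevant overlaps, which lie inside the images $f_n(\mathbb{B})$). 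You also correctly observe that once the orbit enters $\mathbb{B}$ it remains there, which follows from $\|f_n(z)\| \le D\|z\| < 1$ for $z \in \mathbb{B}$, so $\Omega_{(f_n)}$ is precisely the set of points whose orbit eventually enters $\mathbb{B}$, and the abstract basin captures exactly those germs of orbits. The closing remark that none of the refined contraction hypotheses of Theorems \ref{thm:main} or \ref{thm:general} are used is also accurate; only the uniform bounds $C\|z\| \le \|f_n(z)\| \le D\|z\|$ with $0 < C < D < 1$ enter.
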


Hence from now on we allow ourselves to be careless and write $\Omega_{(f_n)}$ for both kinds of attracting basins. Abstract basins were used by Forn{\ae}ss and Stens{\o}nes to prove the following.

\begin{thm}[Forn{\ae}ss-Stens{\o}nes]\label{thm:FS}
Let $f$ and $p$ be as in Conjecture \ref{conj:stable}. Then $\Sigma^s_f(p)$ is equivalent to a domain in $\mathbb C^m$.
\end{thm}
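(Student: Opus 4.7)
The plan is to reduce Theorem~\ref{thm:FS} to the non-autonomous setting by realising $\Sigma^s_f(p)$ as an abstract basin, and then to construct an explicit holomorphic embedding of that abstract basin into $\mathbb{C}^m$ using a non-autonomous Sternberg-type normal form. Since $f$ acts hyperbolically on the compact invariant set $K$, each point $f^n(p)$ of the forward orbit has a well-controlled local stable manifold. Choosing holomorphic coordinates near each $f^n(p)$ that straighten this local stable slice and place $f^n(p)$ at the origin, and restricting $f$ to these slices, one obtains a uniformly attracting sequence $(f_n)$ of biholomorphisms $\mathbb{B}\to\mathbb{B}$. A straightforward extension of Lemma~\ref{lemma:abstract} then identifies $\Sigma^s_f(p)$ with the abstract basin $\Omega_{(f_n)}$, so it suffices to embed $\Omega_{(f_n)}$ as a domain in $\mathbb{C}^m$.

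Because the contraction rates are uniformly bounded below by $C>0$ and above by $D<1$, I would choose $k\in\mathbb{N}$ with $D^{k+1}<C$, which is always possible since $D<1$. I would then inductively construct biholomorphic local changes of coordinates $H_n=\mathrm{Id}+\text{h.o.t.}$ together with lower-triangular polynomial maps $g_n$ of degree at most $k$ satisfying
\begin{equation*}
H_{n+1}\circ f_n \;=\; g_n\circ H_n \;+\; O(\|z\|^{k+1}),
\end{equation*}
in the spirit of Lemma~\ref{lemma:conjugation}. At each step the jets of $H_{n+1}$ and the coefficients of $g_n$ are determined from $(H_n,f_n)$ by solving a triangular linear system degree by degree and coordinate by coordinate, exploiting the uniform spectral gap of $Df_n(0)$. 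The key gain of the lower-triangular form is that each $g_n$ is a polynomial automorphism of all of $\mathbb{C}^m$ whose inverse $g_n^{-1}$ is again a globally defined polynomial.

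Writing $\iota_N:\mathbb{B}\to\Omega_{(f_n)}$ for the canonical inclusions of the abstract basin, the embedding is then defined by
\begin{equation*}
\Phi(x) \;=\; \lim_{N\to\infty}\; g_0^{-1}\circ g_1^{-1}\circ\cdots\circ g_{N-1}^{-1}\circ H_N\bigl(\iota_N^{-1}(x)\bigr).
\end{equation*}
For $x\in\iota_{n_0}(\mathbb{B})$ we have $\|\iota_N^{-1}(x)\|\lesssim D^{N-n_0}$, so the $O(\|z\|^{k+1})$ error at the $N$-th step contributes a quantity of size $D^{N(k+1)}$, which after being pushed back through $N$ copies of $g_j^{-1}$ (each of local Lipschitz constant of order $1/C$) gives a term controlled by $(D^{k+1}/C)^N$. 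By the choice of $k$ this sum is summable, so the sequence converges uniformly on compact subsets of $\Omega_{(f_n)}$ and defines a holomorphic map. Injectivity of $\Phi$ follows since each partial composition $g_0^{-1}\circ\cdots\circ g_{N-1}^{-1}\circ H_N$ is injective on its domain, and its derivative at the origin stabilises to an invertible linear map. The image $\Phi(\Omega_{(f_n)})=\bigcup_N(g_0^{-1}\circ\cdots\circ g_{N-1}^{-1}\circ H_N)(\mathbb{B})$ is an increasing union of open sets, hence an open subset of $\mathbb{C}^m$.

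The main obstacle is the simultaneous, uniform construction of the non-autonomous Sternberg conjugations: one must solve an infinite family of homological equations while keeping the polynomial parts $g_n$ of uniformly bounded degree and coefficient size, and the $\|H_n\|_{\mathbb{B}}$ uniformly controlled, without any standard non-resonance assumption on the spectra of the $Df_n(0)$. Lower-triangularity is what rescues the construction, both because it allows the homological equations to be solved by induction on the coordinate index, and because it guarantees that the resulting $g_n$ extend to global polynomial automorphisms of $\mathbb{C}^m$, which is exactly what is needed for the compositions $g_0^{-1}\circ\cdots\circ g_{N-1}^{-1}$ above to make sense on all of $\mathbb{C}^m$. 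I emphasise that this argument gives only embedding as a domain and not surjectivity onto $\mathbb{C}^m$: $\Phi(\Omega_{(f_n)})$ could well be a proper subset, which is consistent with the fact that Conjecture~\ref{conj:stable} is what the main theorems of the present paper address under additional hypotheses.
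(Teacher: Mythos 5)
Your first step, realising $\Sigma^s_f(p)$ as the abstract basin $\Omega_{(f_n)}$ of a uniformly attracting sequence obtained by following the orbit of $p$, is sound and is essentially Lemma~\ref{lemma:abstract} combined with the Forn{\ae}ss--Stens{\o}nes stable-slice construction. The gap is in the second half, where you ask for a uniformly controlled non-autonomous Sternberg (Poincar\'e--Dulac) conjugation to lower-triangular polynomials $g_n$ of degree $\le k$ via bounded coordinate changes $H_n=\mathrm{Id}+\text{h.o.t.}$ This is precisely the step the paper explains cannot be carried out in general: see the discussion following Lemma~\ref{lemma:dominant}, ``there is no reason to think that one can change coordinates to obtain a sequence of lower triangular polynomial maps.'' To see where it breaks, put the linear parts in the lower-triangular form $\left(\begin{smallmatrix} a_n&0\\ c_n&b_n \end{smallmatrix}\right)$. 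For $2\le d\le k$ the coefficient $\alpha_n$ of the monomial $w^d$ in the first coordinate of $H_n$ (the term that cannot be absorbed into a lower-triangular $g_n$) obeys a scalar affine recursion with multiplier $a_n/b_n^{\,d}$. A bounded bi-infinite solution exists exactly when these multipliers are uniformly bounded away from $1$ in modulus, which is the domination condition of Lemma~\ref{lemma:dominant}. The bounds $C\le|a_n|,|b_n|\le D$ alone only give $C/D^{d}\le|a_n/b_n^{\,d}|\le D/C^{d}$, and once $D^{d}\ge C$ the modulus can fall on either side of $1$ and oscillate as $n$ varies; in that regime the recursion has no uniformly bounded solution and the $H_n$ blow up. Your phrase ``exploiting the uniform spectral gap of $Df_n(0)$'' is assuming exactly what is missing: there is no uniform spectral gap, and this is the central obstruction that motivates the trains of Abbondandolo--Majer and of the present paper.

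There is also an internal inconsistency that signals the gap. If your construction did produce uniformly bounded $(H_n)$ together with a bounded sequence of lower-triangular polynomial automorphisms $(g_n)$ with attracting linear parts, then Lemma~\ref{lemma:conjugation} together with the standard fact (quoted after that lemma) that the basin of such a sequence is all of $\mathbb{C}^m$ would give $\Omega_{(f_n)}\cong\mathbb{C}^m$ --- a proof of Conjecture~\ref{conj:main} in full generality, not the weaker conclusion of Theorem~\ref{thm:FS}. So the caveat you add at the end, that you ``only'' obtain a domain, cannot be reconciled with a successful global normal form; the weakness of the conclusion must come from somewhere, and in \cite{FS} it comes from avoiding precisely the uniform Poincar\'e--Dulac normal form you postulate. (The paper itself does not reproduce the proof of Theorem~\ref{thm:FS}; it is cited to \cite{FS}.)
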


\begin{remark} Working with abstract basins can be very convenient. For example, Lemma \ref{lemma:conjugation} also holds for abstract basins which, in conjunction with Lemma \ref{lemma:abstract}, implies that in Diagram \ref{diagram} we do not need to worry about whether the maps $h_n$ and $g_n$ are globally defined automorphisms. From the fact that the sequences $(g_n)$ and $(h_n)$ are uniformly bounded it follows that their restrictions to some uniform neighborhood of the origin are biholomorphisms, which is all that is needed.
\end{remark}

\section{Definition of the trains}

Let us go back to the proof of \ref{thm:AM} by Abbondandolo and Majer. Instead of conjugating to lower triangular maps on the entire sequence $(f_n)$, they introduced a partition of $\mathbb N$ into intervals of rapidly increasing size, and on each interval changed coordinates to either lower or upper triangular maps. These intervals were referred to as \emph{trains}, and we will build on this terminology.

The main difference between the proof by Abbondandolo and Majer and the technique introduced in this paper is that we let the trains be determined by the sequence $(f_n)$, or rather, by the linear parts of the maps. We will describe an algorithm to construct the partition
$$
\mathbb N = \bigcup [p_j, p_{j+1}).
$$
Each train $[p_j, p_{j+1})$ will be headed by an interval $[p_j, q_j)$ which we will call the \emph{engine}. On the engine we will have very good estimates of the linear maps. Our estimates are not nearly as strong on the interval $[q_j, p_{j+1})$, but the good estimates on the engine will be used as a buffer to deal with estimates on the rest of the train. In fact, as soon as the buffer from the engine fails to be sufficient, we start a new train.

We proceed to define the trains explicitly, both in the diagonal and the general case.

\subsection{The diagonal case}

We assume here that the linear part of each map $f_n$ is of the form
$$
\left[
\begin{matrix}
a_n & 0\\
0 & b_n\\
\end{matrix}
\right]
$$
We then define
\begin{equation}
\begin{aligned}
f_{m,n} & = f_{m-1} \circ \cdots \circ f_n, \\
a_{m,n} & = a_{m-1} \cdot \cdots \cdot a_n, \; \; \mathrm{and} \\
b_{m,n} & = b_{m-1} \cdot \cdots \cdot b_n.
\end{aligned}
\end{equation}
We will define an increasing sequence $p_0, q_0, p_1, \ldots$ as follows. We set $p_0 = 0$ and $q_0 = 2$. We define the rest of the sequence recursively. Suppose that we have already defined $p_j$ and $q_j$. We consider all pairs $p_{j+1}, q_{j+1}$ satisfying $q_j \le p_{j+1} \le q_{j+1}$ and
\begin{equation}\label{eq:bound}
\left| \frac{a_{q_{j+1},p_{j+1}}}{b_{q_{j+1},p_{j+1}}}\right|^{(-1)^j} \ge D^{-k^{j+1}}.
\end{equation}
Out of all such pairs we choose $q_{j+1}$ minimal, and then $p_{j+1}$ such that
$$
\left| \frac{a_{q_{j+1},p_{j+1}}}{b_{q_{j+1},p_{j+1}}}\right|^{(-1)^j}
$$
is maximal. We will write $I_j = [p_j, p_{j+1})$ and refer to this interval as the $j$th-train. The interval $[p_j, q_j]$ we will call the \emph{engine} of the train $I_j$.

In the diagonal case we can immediately deduce several properties of the trains $I_j$. First of all, from equation \eqref{eq:bound} it is clear that
\begin{equation*}
\sum \frac{|I_j|}{k^j} = \infty,
\end{equation*}
which will later allow us to apply Lemma \ref{lemma:sparse}.

\begin{prop}\label{prop:trains}
On the engine of the train we have
\begin{equation}\label{eq:train1}
\left| \frac{a_{n,p_j}}{b_{n,p_j}}\right|^{(-1)^{j+1}} \ge 1 \; \; \mathrm{and} \; \; \left| \frac{a_{q_j,n}}{b_{q_j,n}}\right|^{(-1)^{j+1}} \ge 1,
\end{equation}
for $q_j \ge n \ge p_j$, and moreover
\begin{equation}\label{eq:train2}
\frac{D^{-k^j}}{C} \ge \left| \frac{a_{q_j,p_j}}{b_{q_j,p_j}}\right|^{(-1)^{j+1}} \ge D^{-k^j}.
\end{equation}
Furthermore, for $p_{j+1} \ge n \ge m \ge q_j$ we have
\begin{equation}\label{eq:train3}
\left| \frac{a_{n,m}}{b_{n,m}}\right|^{(-1)^{j+1}} > D^{k^{j+1}} \; \; \mathrm{and} \; \; \left| \frac{a_{p_{j+1},q_j}}{b_{p_{j+1},q_j}}\right|^{(-1)^{j+1}} \ge 1.
\end{equation}
\end{prop}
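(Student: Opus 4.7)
The approach is to derive all three equations as direct consequences of the two optimality conditions built into the recursive definition --- minimality of $q_j$ and maximality of the ratio at $p_j$ --- together with the multiplicative identity
$$
\frac{a_{n,p}}{b_{n,p}}=\frac{a_{n,m}}{b_{n,m}}\cdot\frac{a_{m,p}}{b_{m,p}}\qquad (p\le m\le n),
$$
and the one-step bound $|a_n/b_n|^{\pm 1}\le D/C$ coming from $C\le|a_n|,|b_n|\le D$. Since $(-1)^{j+1}=(-1)^{j-1}$, the exponent appearing in the proposition for the $j$th train is the same exponent that governed the construction of $(p_j,q_j)$ at stage $j-1$ of the recursion; I will keep rewriting it as $(-1)^{j-1}$ to minimize confusion.

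The two-sided bound \eqref{eq:train2} I handle first. The lower bound is exactly condition \eqref{eq:bound} at stage $j-1$. For the upper bound, note that $p_j<q_j$ (otherwise the ratio is $1<D^{-k^j}$), so $q_j-1\ge p_j\ge q_{j-1}$ and the pair $(p_j,q_j-1)$ was eligible at stage $j-1$ with $q=q_j-1$; minimality of $q_j$ then forces $|a_{q_j-1,p_j}/b_{q_j-1,p_j}|^{(-1)^{j-1}}<D^{-k^j}$, and multiplying by $|a_{q_j-1}/b_{q_j-1}|^{(-1)^{j-1}}\le D/C$ yields the claimed upper bound. The two inequalities in \eqref{eq:train1} are proved by contradiction via the multiplicative identity: if some $p_j<n\le q_j$ violates the first, then $(n,q_j)$ satisfies the ratio condition at stage $j-1$ with value strictly exceeding that of $(p_j,q_j)$, contradicting the maximality of $p_j$; dually, if some $p_j\le n<q_j$ violates the second, the pair $(p_j,n)$ is eligible with $q=n<q_j$, contradicting the minimality of $q_j$.

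Equation \eqref{eq:train3} is proved by the same bookkeeping, now using the optimality of $(p_{j+1},q_{j+1})$ at stage $j$. Since $p_{j+1}<q_{j+1}$, any pair $(m,n)$ with $q_j\le m\le n\le p_{j+1}<q_{j+1}$ was available to the algorithm before $q_{j+1}$ was chosen, so minimality of $q_{j+1}$ forces its ratio to fail the threshold $D^{-k^{j+1}}$, which, after passing to the reciprocal exponent $(-1)^{j+1}$, is exactly the first inequality. For the second, if $|a_{p_{j+1},q_j}/b_{p_{j+1},q_j}|^{(-1)^{j+1}}<1$, then applying the multiplicative identity to $q_j\le p_{j+1}\le q_{j+1}$ shows that $(q_j,q_{j+1})$ strictly beats $(p_{j+1},q_{j+1})$ in the ratio, contradicting the maximality of $p_{j+1}$. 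The only real hazard in carrying this out is the constant sign-bookkeeping: the parity of $(-1)^j$ flips between the engine and the tail of each train, and since $D<1$, the constraint $\ge D^{-k^j}$ is a \emph{large}-ratio condition; beyond tracking these signs and exponents carefully there is no analytic difficulty, and \eqref{eq:train1}--\eqref{eq:train2} for $j=0$ are either vacuous or a constraint on the initial choice $p_0=0,q_0=2$.
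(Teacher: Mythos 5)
Your proof is correct and follows the approach the paper intends: the paper offers no explicit argument beyond the sentence ``All three properties follow immediately from the definitions,'' and your write-up supplies exactly the intended details (the multiplicative identity for the ratios, the eligibility condition at stage $j-1$ versus stage $j$, minimality of $q_j$, maximality of $p_j$, the one-step bound $|a_n/b_n|^{\pm1}\le D/C$, and the observation that $p_j<q_j$ since the empty ratio $1$ fails the threshold $D^{-k^j}>1$). You also correctly flag that the $j=0$ case is not covered by the optimality conditions since $(p_0,q_0)=(0,2)$ is fixed a priori; the paper glosses over this but it is harmless for the downstream use of the proposition.
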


All three properties follow immediately from the definitions.

\medskip

\subsection{The general case}

Our goal is now to emulate the above construction while dropping the assumption that the linear parts of the maps $f_n$ are diagonal. In the diagonal case we eventually will obtain a sequence $(g_n)$ where each map $g_n$ is either lower triangular or upper triangular, depending on whether $n$ lies in an odd or even train. In the general case this will be generalized as follows: for each train $[p_j, p_{j+1})$ there will be a unit vector $v_j$ with respect to which the maps $g_n$ will be lower triangular. Let us be more precise. Suppose that the linear part of a map $g$ has an eigenvector $v$. Let $U$ be a $2\times 2$ unitary matrix such that
$$
Uv=
\left[
\begin{matrix}
0 \\
1
\end{matrix}
\right].
$$
We say that $g$ is lower triangular \emph{with respect to $v$} if
$U g U^{-1}$ is a lower triangular polynomial. Note that this
definition is independent of the choice of $U$.

Looking back, we see that in the diagonal case our maps $g_n$ are all lower triangular with respect to either $[0,1]$ or $[1,0]$. In the general case we allow the vector $v$ to be any unit vector in $\mathbb C^2$, but $v$ must remain constant on each train. This motivates the following definition.

\begin{defn}\label{def:trains}
For $k>x > 1$, recursively define an increasing sequence of
integers $p_1, q_1, p_2, q_2, \ldots$ and a sequence of
unit-vectors $v_1, v_2, \ldots$ as follows:

\medskip

Set $p_0=0$, $q_0=\lceil\frac{2}{k-x}\rceil$ and $v_0=(1,0)$.
Define the rest of the sequences by induction:

Given $p_j$, $q_j$ and $v_j$, consider all possible pairs of
integers $p_{j+1}$ and $q_{j+1}$ such that $q_{j+1} > p_{j+1} \ge
q_{j}$ and for which
$$
 \frac{|\mathrm{det} Df_{q_{j+1}, p_{j+1}}(0)|\cdot
|df_{p_{j+1}, p_j}(v_j)|^{x+1}}{|df_{q_{j+1}, p_j}(v_j)|^{x+1}}
\le D^{2^{j+1}}.
$$

Out of all possible choices for $p_{j+1}$ and $q_{j+1}$, we choose
$q_{j+1}$ minimal, and then $p_{j+1}$ such that $$\frac{|\mathrm{det} Df_{q_{j+1}, p_{j+1}}(0)|\cdot |df_{p_{j+1},
p_j}(v_j)|^{x+1}}{|df_{q_{j+1}, p_j}(v_j)|^{x+1}}$$ is minimal.

\medskip

By composing $Df_{q_{j+1},p_{j+1}}$ with a unitary matrix
$U_{j+1}$, we may assume that $df_{p_{j+1},p_j}(v_j)$ is an
eigenvector of $U_{j+1} \circ Df_{q_{j+1},p_{j+1}}$. Let $v_{j+1}$ be the other
unit-length eigenvector of $U_{j+1} \circ Df_{q_{j+1},p_{j+1}}$ (unique up to multiplication by a unit-length scalar). This vector must exist, since the eigenvector
$df_{p_{j+1},p_j}(v_j)$ has eigenvalue $\frac{|df_{q_{j+1},
p_j}(v_j)|}{|df_{p_{j+1}, p_j}(v_j)|}$ and we have
\begin{align*}
\left(\frac{|df_{q_{j+1}, p_j}(v_j)|}{|df_{p_{j+1},
p_j}(v_j)|}\right)^2 &\geq
\left(\frac{1}{D}\right)^{(2^{j+2})/(x+1)}\cdot |\mathrm{det}
Df_{q_{j+1}, p_{j+1}}(0)|^{2/(x+1)}\\
&> 1 \cdot |\mathrm{det} Df_{q_{j+1},
p_{j+1}}(0)|^{4/5}>|\mathrm{det} Df_{q_{j+1}, p_{j+1}}(0)|
\end{align*}
since $|\mathrm{det} Df_{q_{j+1},
p_{j+1}}(0)|\leq D^{2(q_{j+1}-p_{j+1})}<1$. This implies that
$U_{j+1} \circ Df_{q_{j+1},p_{j+1}}$ has two distinct eigenvalues,
and hence two distinct eigenvectors.
\end{defn}

Our strategy to prove inequalities like those in Proposition
\ref{prop:trains} on an engine $[p_j, q_j)$, is the following. We
will work not only with the diagonal entries $a_n$ and $b_n$ in
the current coordinates, but first with the entries $\alpha_n$ and
$\beta_n$, which are the diagonal entries in the coordinates that
were used for the previous train. By the recursive definition of
the train, we first obtain estimates for $\alpha_n$ and $\beta_n$,
and then translate these to estimates on $a_n$ and $b_n$ in the
new coordinates. In the translation from old to new coordinates
the off-diagonal entries, denoted by $c_n$ and $\gamma_n$, will
play an important role.

Given $(p_j)$, $(q_j)$ and $(v_j)$ as defined above, we now apply a non-autonomous conjugation such that within each train $[p_j, p_{j+1})$ all linear parts have $[0,1]$ as an eigenvector. We know that $v_j$ is an
eigenvector of $U_j \circ Df_{q_{j},p_{j}}(0)$. Pick a unitary
matrix $V_j$ that transfers $v_j$ into $[0,1]$. Then $[0,1]$ will
be eigenvector of $V_j \circ U_j \circ Df_{q_{j},p_{j}}(0) \circ
V_j^{-1}$. Next, for $i=p_j+1, \ldots p_{j+1}$, pick a unitary
matrix $W_i$ such that $[0,1]$ is an eigenvector of $W_i \circ
Df_{i,p_{j}}(0) \circ V_j^{-1}$. We can pick $W_{q_j}=V_j \circ
U_j$. Define
\begin{equation}
\tilde{f}_i=\begin{cases} W_{i+1} \circ f_i \circ W_i^{-1} &
\textrm{ if }p_j < i <p_{j+1},\\
W_{p_j+1} \circ f_{p_j} \circ V_j^{-1} & \textrm{ if } i=p_j.
\end{cases}
\end{equation}

Now the composition $f_{p_{j+1},p_j}=f_{p_{j+1}-1} \circ \cdots
\circ f_{p_j}$ can also be written as
\begin{equation}\label{eq:ftilde}
f_{p_{j+1},p_j}=W_{p_{j+1}}^{-1} \circ \tilde{f}_{p_{j+1}-1}
\circ \ldots \circ \tilde{f}_{p_j} \circ V_j.
\end{equation}
We know
that $[0,1]$ is an eigenvector of each $D\tilde{f}_{n,m}(0)$ for
$p_j \leq m \leq n \leq p_{j+1}$. So these linear maps are
lower-triangular: write
$D\tilde{f}_{n,m}(0)=\left(\begin{smallmatrix} a_{n,m}&0\\
c_{n,m}&b_{n,m}
\end{smallmatrix} \right)$.

Then we have
\begin{equation}\label{eq:b}
|b_{n,m}| =\frac{|d\tilde{f}_{n,
p_j}\left([0,1]\right)|}{|d\tilde{f}_{m, p_j}\left([0,1]\right)|}=
\frac{|df_{n, p_j}(v_j)|}{|df_{m, p_j}(v_j)|},
\end{equation}
and
\begin{equation}\label{eq:a}
|a_{n,m}| =|\mathrm{det} D\tilde{f}_{n,m}(0)| / |b_{n,m}|=
|\mathrm{det} Df_{n,m}(0)| / |b_{n,m}|.
\end{equation}
Write $a_n=a_{n+1,n}$, $b_n=b_{n+1,n}$ and $c_n=c_{n+1,n}$. Note that
$a_{n,m}=\prod_{i=m}^{n-1}a_i$ and $b_{n,m}=\prod_{i=m}^{n-1} b_i$ as in the diagonal case, and
$$
c_{n,m}=\sum_{i=m}^{n-1} b_{n,i+1} c_i a_{i,m}.
$$

Since all $V_j$ and $W_i$ are unitary matrices, we still
have that $C\|z\| \le \|\tilde{f}_n(z)\| \le D\|z\|$ for $z \in \mathbb B$, and hence we
know that $C \leq a_i \leq D$, $C \leq b_i \leq D$ and $c_i \leq
D$ for all $i$.

\begin{lemma}\label{lemma:trein}
In this new notation, our trains have the following properties:
\begin{enumerate}
\item[(i)]{On the engine of train $j$ we know that
$$\left|\frac{a_{q_j,p_j}^x}{b_{q_j,p_j}}\right| \ge D^{-2^{j}}.$$}
\item[(ii)]{On the wagons of the train we have:
\begin{equation*}
\left|\frac{a_{n,m}}{b_{n,m}^x}\right|> D^{2^{j+1}}, \; \; \mathrm{and} \; \; \left|\frac{a_{p_{j+1},q_j}}{b_{p_{j+1},q_j}^x}\right|\ge 1.
\end{equation*}
for $q_j \leq m \leq n \leq p_{j+1}$.}
\item[(iii)]{And for all $j \geq 0$ we have an estimate for the length of the $j$'th engine: $$q_j-p_j
\geq \frac{2^j}{k-x}.$$}
\end{enumerate}
\end{lemma}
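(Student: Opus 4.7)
My plan is to read off all three claims from equations \eqref{eq:b}--\eqref{eq:a} together with Definition \ref{def:trains}. The unifying observation is that in the train-$j$ coordinates the quantity appearing in (ii) equals the very ratio that is tested in Definition \ref{def:trains}:
\[
\left|\frac{a_{n,m}}{b^x_{n,m}}\right|=\frac{|\det Df_{n,m}(0)|\cdot |df_{m,p_j}(v_j)|^{x+1}}{|df_{n,p_j}(v_j)|^{x+1}}=:\phi_j(m,n),
\]
and $\phi_j$ is multiplicative in its middle argument: $\phi_j(m,n)\phi_j(n,\ell)=\phi_j(m,\ell)$.

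For (i), I identify the moduli of the two eigenvalues of $U_j\circ Df_{q_j,p_j}(0)$. The eigenvector $df_{p_j,p_{j-1}}(v_{j-1})$ carries eigenvalue of modulus $|df_{q_j,p_{j-1}}(v_{j-1})|/|df_{p_j,p_{j-1}}(v_{j-1})|$, and since the two moduli multiply to $|\det Df_{q_j,p_j}(0)|$, the complementary eigenvector $v_j$ must carry an eigenvalue whose modulus is exactly $|b_{q_j,p_j}|$ by \eqref{eq:b}. A short algebraic manipulation then gives $|a^x_{q_j,p_j}/b_{q_j,p_j}| = \phi_{j-1}(p_j,q_j)^{-1}$, and the defining inequality $\phi_{j-1}(p_j,q_j)\le D^{2^j}$ yields (i).

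For (ii), the first bound is immediate from the minimality of $q_{j+1}$: for any $q_j\le m<n\le p_{j+1}$ we have $n<q_{j+1}$, so $(m,n)$ would furnish a strictly smaller admissible ``$q$'' than $q_{j+1}$, forcing $\phi_j(m,n)>D^{2^{j+1}}$. For the second bound I use multiplicativity to write $\phi_j(q_j,p_{j+1})=\phi_j(q_j,q_{j+1})/\phi_j(p_{j+1},q_{j+1})$ and run a short case split on whether $(q_j,q_{j+1})$ is a valid competitor. If it is, then the choice of $p_{j+1}$ as the ratio-minimizer with fixed $q_{j+1}$ forces $\phi_j(p_{j+1},q_{j+1})\le \phi_j(q_j,q_{j+1})$; if it is not, then $\phi_j(q_j,q_{j+1})>D^{2^{j+1}}\ge \phi_j(p_{j+1},q_{j+1})$ directly. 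Either way $\phi_j(q_j,p_{j+1})\ge 1$.

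For (iii), the base case $q_0-p_0=\lceil 2/(k-x)\rceil\ge 2^0/(k-x)$ is immediate. For $j\ge 1$ I insert the elementary bounds $|a_i|\le D$ and $|b_i|\ge C$ into (i) to get
\[
D^{-2^j}\le \left|\frac{a^x_{q_j,p_j}}{b_{q_j,p_j}}\right|\le \left(\frac{D^x}{C}\right)^{q_j-p_j},
\]
and then take logarithms; under the hypothesis $D^k\le C$ (which in the regime $D^x>C$ where the algorithm actually produces finite trains gives $\log_D C\le k$) this yields $q_j-p_j\ge 2^j/(\log_D C - x)\ge 2^j/(k-x)$. The main obstacle I anticipate is the second bound of (ii), where both the minimality of $q_{j+1}$ and the separate ratio-minimality in the selection of $p_{j+1}$ must be used simultaneously via the case split; the remainder of the argument is careful bookkeeping of the definitions and of \eqref{eq:b}--\eqref{eq:a}.
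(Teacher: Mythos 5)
Your proposal is correct and follows essentially the same route as the paper: all three parts reduce to the ratio $\phi_j(m,n)=|a_{n,m}/b_{n,m}^x|$ via \eqref{eq:b}--\eqref{eq:a}, part (i) uses the eigenvector/determinant relation for $U_j\circ Df_{q_j,p_j}(0)$ to rewrite the ratio in previous-train coordinates and then quotes the defining inequality, part (ii) uses minimality of $q_{j+1}$ and of $p_{j+1}$, and part (iii) combines (i) with $|a_i|\le D$, $|b_i|\ge C$ and $D^k<C$. Your explicit case split in the second half of (ii) is a slightly more careful reading of the minimization in Definition~\ref{def:trains} than the paper gives, and the $\log_D C$ detour in (iii) is correct but unnecessary since $D^{-2^j}\le (D^x/C)^{q_j-p_j}\le D^{-(k-x)(q_j-p_j)}$ gives the bound directly.
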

\begin{proof}
To prove the first statement, note that by (\ref{eq:b}),
(\ref{eq:a}) and the fact that $U_j$ is unitary we have
$$
\left|\frac{a_{q_j,p_j}^x}{b_{q_j,p_j}}\right|=\frac{|\mathrm{det} Df_{q_j,p_j}(0)|^x}{|b_{q_j,p_j}|^{x+1}}=\frac{|\mathrm{det} Df_{q_j,p_j}(0)|^x}{|df_{q_j, p_j}(v_j)|^{x+1}}=\frac{|\mathrm{det} (U_j \circ Df_{q_j,p_j}(0))|^x}{|(U_j \circ df_{q_j, p_j})(v_j)|^{x+1}}.
$$
Since $v_j$ and $df_{p_{j},p_{j-1}}(v_{j-1})$ are distinct
eigenvectors of $U_j \circ Df_{q_j,p_j}(0)$, we obtain
\begin{align*}
\frac{|\mathrm{det} (U_j \circ Df_{q_j,p_j}(0))|^x}{|(U_j \circ
df_{q_j, p_j})(v_j)|^{x+1}}
= \frac{|df_{q_j, p_{j-1}}(v_{j-1})|^{x+1}}{|\mathrm{det}
Df_{q_j,p_j}(0)|\cdot |df_{p_j, p_{j-1}}(v_{j-1})|^{x+1}}.
\end{align*}
And by definition of $q_j$ and $p_j$, we
know that
$$
\frac{|df_{q_j,
p_{j-1}}(v_{j-1})|^{x+1}}{|\mathrm{det} Df_{q_j,p_j}(0)|\cdot
|df_{p_j, p_{j-1}}(v_{j-1})|^{x+1}} \geq D^{-2^j},
$$
which completes (i).

For $q_j \leq
m\leq n \leq p_{j+1}$ we have:
$$\left|\frac{a_{n,m}}{b_{n,m}^x}\right|=\frac{|\mathrm{det} Df_{n, m}(0)|\cdot |df_{m, p_j}(v_j)|^{x+1}}{|df_{n, p_j}(v_j)|^{x+1}}> D^{2^{j+1}}$$
by minimality of $q_{j+1}$. The second half of statement (ii) follows from our choice of $p_{j+1}$.

\medskip

Finally, use part $(i)$ to see that $$D^{-2^j} \leq
\left|\frac{a_{q_j,p_j}^x}{b_{q_j,p_j}}\right| \leq
\frac{(D^{q_j-p_j})^x}{C^{q_j-p_j}}\leq
\left(\frac{D^x}{D^k}\right)^{q_j-p_j}=D^{-(k-x)(q_j-p_j)},$$
which implies that $q_j-p_j \geq \frac{2^j}{k-x}$. The case $j=0$ holds by definition.
\end{proof}

Since $f_{p_{j},p_{j-1}}(v_{j-1})$ is another eigenvector of
$U_{j} \circ Df_{q_{j},p_{j}}$, we now introduce notation that
emphasizes the role of this vector. Pick a unitary matrix $S_j$
which maps $f_{p_{j},p_{j-1}}(v_{j-1})$ to $[0,1]$. Then pick
unitary matrices $T_i$ for $i=p_j+1, \ldots q_{j}$ such that
$[0,1]$ is an eigenvector of $T_i \circ Df_{i,p_{j}} \circ
S_j^{-1}$. Note that we can take $T_{q_j}=S_j \circ U_j$.

As before, define
\begin{equation}
\hat{f}_i=\begin{cases} T_{i+1} \circ f_i \circ T_i^{-1} &
\textrm{ if }p_j < i <q_j\\
T_{p_j+1} \circ f_{p_j} \circ S_j^{-1} & \textrm{ if } i=p_j
\end{cases}
\end{equation}
Then we have $f_{q_j,p_j}=W_{q_j}^{-1} \circ \tilde{f}_{q_j,p_j}
\circ V_j=T_{q_j}^{-1} \circ \hat{f}_{q_j,p_j} \circ S_j$.

We know that $[0,1]$ is an eigenvector of each $D\hat{f}_{n,m}(0)$
for $p_j \leq m \leq n \leq q_j$. Therefore we can write
$D\hat{f}_{n,m}(0)=\left(\begin{smallmatrix} \alpha_{n,m}&0\\
\gamma_{n,m}&\beta_{n,m}
\end{smallmatrix} \right)$.

Then we have for $p_j \leq m \leq n \leq q_j$:
$$
|\beta_{n,m}| = \frac{|df_{n,
p_{j}}(f_{p_{j},p_{j-1}}(v_{j-1}))|}{|df_{m,
p_{j}}(f_{p_{j},p_{j-1}}(v_{j-1}))|}=\frac{|df_{n,
p_{j-1}}(v_{j-1})|}{|df_{m, p_{j-1}}(v_{j-1})|},
$$
and
$$
|\alpha_{n,m}| = |\mathrm{det} Df_{n,m}| / |\beta_{n,m}|.
$$
Writing $\alpha_n=\alpha_{n+1,n}$,
$\beta_n=\beta_{n+1,n}$ and $\gamma_n=\gamma_{n+1,n}$ as before, we again have
$$
\gamma_{n,m}=\sum_{i=m}^{n-1} \beta_{n,i+1} \gamma_i
\alpha_{i,m},
$$
and
$$
\gamma_i \leq D
$$
for all $i$.

Note that $|a_n \cdot b_n| = |\mathrm{det} Df_n |=
|\alpha_n \cdot \beta_n|$ for all $n$. Since
$\beta_{q_{j},p_{j}}$ and $b_{q_{j},p_{j}}$ are the eigenvalues of
$U \circ Df_{q_{j},p_{j}}$ for two distinct eigenvectors, we must have $\beta_{q_{j},p_{j}} \cdot
b_{q_{j},p_{j}}=\mathrm{det} Df_{q_{j},p_{j}}$, which implies that
$|\beta_{q_{j},p_{j}}|=|a_{q_{j},p_{j}}|$ and
$|\alpha_{q_{j},p_{j}}|=|b_{q_{j},p_{j}}|$.

This new notation allows us to derive additional inequalities for the engines of the trains:
\begin{lemma}\label{lemma:Grieks}
For $p_j \leq m \leq n \leq q_j$ we have:
\begin{enumerate}
\item[(i)]{$\left|\frac{\alpha_{q_j, n}}{\beta_{q_j, n}^x}\right| \leq 1$,}
\item[(ii)]{$\left|\frac{\alpha_{n,p_j}}{\beta_{n,p_j}^x}\right| \leq
1$, and}
\item[(iii)]{$D^{2^j}D^{k-x}< \left|\frac{\alpha_{n,m}}{\beta_{n,m}^x}\right|<D^{-2^j}$.}
\end{enumerate}
\end{lemma}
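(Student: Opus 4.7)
The plan is to abbreviate $\phi(n,m) := |\alpha_{n,m}/\beta_{n,m}^x|$; by the formulas for $|\alpha_{n,m}|$ and $|\beta_{n,m}|$ this equals
\[
\phi(n,m) = \frac{|\mathrm{det}\, Df_{n,m}(0)|\cdot|df_{m,p_{j-1}}(v_{j-1})|^{x+1}}{|df_{n,p_{j-1}}(v_{j-1})|^{x+1}},
\]
and $\phi$ is multiplicative on the engine: $\phi(n,m) = \phi(n,l)\phi(l,m)$ for $p_j \le m \le l \le n \le q_j$, so in particular $\phi(n,m) = \phi(n,p_j)/\phi(m,p_j) = \phi(q_j,m)/\phi(q_j,n)$. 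The argument will rest on four ingredients: (a) by the definition of $(p_j,q_j)$, $\phi(q_j,p_j) \le D^{2^j}$; (b) by the minimality of $q_j$, $\phi(q,p) > D^{2^j}$ whenever $q < q_j$ and $q_{j-1} \le p < q$; (c) by the minimality of $p_j$, $\phi(q_j,p) \ge \phi(q_j,p_j)$ for every $p \in [q_{j-1},q_j-1]$ with $\phi(q_j,p) \le D^{2^j}$; and (d) the single-step bound $\phi(i+1,i) \ge C/D^x \ge D^{k-x}$, which follows from $|\alpha_i| \ge C$, $|\beta_i| \le D$, and the standing inequality $C \ge D^k$.

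For (i) the case $n = q_j$ is trivial; for $n < q_j$ I apply (b) to $(n,p_j)$ to get $\phi(n,p_j) > D^{2^j}$, and then divide (a) by this to conclude $\phi(q_j,n) < 1$. For (ii) the boundary cases $n \in \{p_j,q_j\}$ are immediate from (a); for $n \in (p_j,q_j)$ I split on whether $\phi(q_j,n) \le D^{2^j}$. If so, then $n$ is itself an admissible choice in the $p_j$-minimization and (c) yields $\phi(q_j,n) \ge \phi(q_j,p_j)$, whence $\phi(n,p_j) \le 1$. If not, then $\phi(q_j,n) > D^{2^j}$ and dividing (a) by this gives $\phi(n,p_j) < 1$.

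For the upper bound in (iii), when $m < q_j$ I decompose $\phi(n,m) = \phi(n,p_j)/\phi(m,p_j)$ and combine (ii) ($\le 1$ in the numerator) with (b) applied to $(m,p_j)$ ($> D^{2^j}$ in the denominator) to get $\phi(n,m) < D^{-2^j}$; the case $m = q_j$ forces $n = q_j$ and $\phi = 1$. For the lower bound, when $n < q_j$, (b) applied directly to $(n,m)$ already gives $\phi(n,m) > D^{2^j} > D^{2^j+k-x}$, using $k > x$ and $D < 1$. The delicate case is $n = q_j$: I peel off the last step as $\phi(q_j,m) = r_{q_j-1}\,\phi(q_j-1,m)$ where $r_{q_j-1} \ge D^{k-x}$ by (d), and $\phi(q_j-1,m) > D^{2^j}$ by (b) applied to the interior pair $(q_j-1,m)$ for $m < q_j - 1$; multiplication yields $\phi(q_j,m) > D^{2^j+k-x}$, and the sub-boundary $m \in \{q_j-1,q_j\}$ is checked directly from (d) and $\phi(q_j,q_j) = 1$.

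The only real obstacle is the $n = q_j$ subcase of the lower bound in (iii): the minimality of $q_j$ does not reach this endpoint, which forces me to peel off exactly one step and pay the single-step factor $D^{k-x}$. That factor is precisely the $D^{k-x}$ appearing in the stated lower bound, so the loss is sharp. Everything else is routine bookkeeping once the multiplicative structure of $\phi$ on the engine is exploited.
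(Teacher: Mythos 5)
Your proposal is correct and follows essentially the same route as the paper: express $\left|\alpha_{n,m}/\beta_{n,m}^x\right|$ in terms of $|\det Df_{n,m}(0)|$ and $|df_{\cdot,p_{j-1}}(v_{j-1})|$, exploit multiplicativity on the engine, and invoke the defining minimality of $q_j$ and $p_j$ together with the one-step estimate $|\alpha_i|/|\beta_i|^x \ge C/D^x > D^{k-x}$. The only difference is organizational: for the lower bound in (iii) the paper uniformly peels off the last factor $\phi(n,n-1)$ for every $m<n\le q_j$ (so $n-1<q_j$ always puts the inner pair within reach of the $q_j$-minimality), whereas you split into $n<q_j$ (direct) and $n=q_j$ (peel one step) — both are fine, and your explicit case analysis for (ii) on whether $\phi(q_j,n)\le D^{2^j}$ is a valid way to fill in what the paper states tersely as ``by the choice of $p_j$ and $q_j$.''
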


\begin{proof}
By the choice of $p_j$ and $q_j$ and the fact that
\begin{align*}
\frac{|\mathrm{det} Df_{q_{j},p_{j} }(0)|\cdot
|df_{p_{j}, p_{j-1}}(v_{j-1})|^{x+1}}{|df_{q_{j},
p_{j-1}}(v_{j-1})|^{x+1}} =& \frac{|\mathrm{det}
Df_{q_{j}, n}(0)|\cdot |df_{n,
p_{j-1}}(v_{j-1})|^{x+1}}{|df_{q_{j}, p_{j-1}}(v_{j-1})|^{x+1}}\\
&\cdot \frac{|\mathrm{det} Df_{n, p_{j}}(0)|\cdot
|df_{p_{j}, p_{j-1}}(v_{j-1})|^{x+1}}{|df_{n,
p_{j-1}}(v_{j-1})|^{x+1}},
\end{align*}
we obtain (i) and (ii):
$$
\left|\frac{\alpha_{q_j, n}}{\beta_{q_j,
n}^x}\right|= \frac{|\mathrm{det} Df_{q_{j}, n}(0)|\cdot
|df_{n, p_{j-1}}(v_{j-1})|^{x+1}}{|df_{q_{j},
p_{j-1}}(v_{j-1})|^{x+1}} \leq 1
$$
and
$$
\left|\frac{\alpha_{n, p_j}}{\beta_{n,
p_j}^x}\right|=\frac{|\mathrm{det} Df_{n, p_{j}}(0)|\cdot
|df_{p_{j}, p_{j-1}}(v_{j-1})|^{x+1}}{|df_{n,
p_{j-1}}(v_{j-1})|^{x+1}} \leq 1.
$$

Inequality (iii) is trivial if $n=m$. If $m<n$ we have:
\begin{align*}
\left|\frac{\alpha_{n,m}}{\beta_{n,m}^x}\right|&=\left|
\frac{\alpha_{n,p_j}}{\beta_{n,p_j}^x}\right|\cdot \left|
\frac{\alpha_{m,p_j}}{\beta_{m,p_j}^x}\right|^{-1}\\
&\leq 1 \cdot \left(\frac{|\mathrm{det} Df_{m,
p_{j}}(0)|\cdot |df_{p_{j}, p_{j-1}}(v_{j-1})|^{x+1}}{|df_{m,
p_{j-1}}(v_{j-1})|^{x+1}}\right)^{-1}<\left(D^{2^j}\right)^{-1}=D^{-2^j}
\end{align*}
since $m<q_j$, which proves the right-hand side of inequality (iii).

By definition of $q_j$ we have
$$
\left|\frac{\alpha_{n-1,m}}{\beta_{n-1,m}^x}\right|=
\frac{|\mathrm{det} Df_{n-1, m}(0)|\cdot |df_{m,
p_{j-1}}(v_{j-1})|^{x+1}}{|df_{n-1, p_{j-1}}(v_{j-1})|^{x+1}} \geq
D^{2^j},
$$
since $n-1<q_j$. Hence
$$
\left|\frac{\alpha_{n,m}}{\beta_{n,m}^x}\right|=
 \left|\frac{\alpha_{n-1,m}}{\beta_{n-1,m}^x}\right|\cdot
\left|\frac{\alpha_{n-1}}{\beta_{n-1}^x}\right|\geq D^{2^j}\cdot \frac{C}{D^x}
> D^{2^j} \cdot D^{k-x},
$$
which implies the left-hand side of inequality (iii).
\end{proof}

We can now give an upper estimate for
$|\gamma_{n,m}|$ on the engine as well:

\begin{lemma}\label{lemma:gamma}
For $p_j \leq m \leq n \leq q_j$, we have
$$
|\gamma_{n,m}| \leq
K_1 \cdot
\left|\frac{\beta_{n,p_j}}{\alpha_{m,p_j}^{1/x}}\right|,
$$
where $K_1=\frac{D}{C(1-D^{1-1/x})}$ is a constant depending only on $C$, $D$ and $x$.
\end{lemma}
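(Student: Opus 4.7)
The starting point is the recursive formula
$$\gamma_{n,m}=\sum_{i=m}^{n-1}\beta_{n,i+1}\,\gamma_i\,\alpha_{i,m}$$
together with the uniform bound $|\gamma_i|\le D$. My plan is to rewrite every factor in the sum so that the $p_j$-pivoted cocycles $\alpha_{\cdot,p_j}$ and $\beta_{\cdot,p_j}$ appear explicitly, and then to invoke Lemma~\ref{lemma:Grieks} on the engine to convert the telescoping factors into a geometric series.

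Concretely, using $\beta_{n,i+1}=\beta_{n,p_j}/\beta_{i+1,p_j}$ and $\alpha_{i,m}=\alpha_{i,p_j}/\alpha_{m,p_j}$, the triangle inequality gives
$$|\gamma_{n,m}|\le D\,\frac{|\beta_{n,p_j}|}{|\alpha_{m,p_j}|}\sum_{i=m}^{n-1}\frac{|\alpha_{i,p_j}|}{|\beta_{i+1,p_j}|}.$$
The second step is to bound the denominator $|\beta_{i+1,p_j}|$ from below in terms of $|\alpha_{i,p_j}|$: combining the one-step estimate $|\beta_{i+1,p_j}|=|\beta_i|\cdot|\beta_{i,p_j}|\ge C\,|\beta_{i,p_j}|$ with part (ii) of Lemma~\ref{lemma:Grieks}, namely $|\alpha_{i,p_j}|\le|\beta_{i,p_j}|^x$, yields
$$|\beta_{i+1,p_j}|\ge C\,|\alpha_{i,p_j}|^{1/x}.$$
Hence each summand is dominated by $C^{-1}|\alpha_{i,p_j}|^{\,1-1/x}$.

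The third step is to factor out the $i$-independent piece $|\alpha_{m,p_j}|^{\,1-1/x}$ using $|\alpha_{i,p_j}|=|\alpha_{i,m}|\cdot|\alpha_{m,p_j}|$ and then to use the uniform contraction $|\alpha_{i,m}|\le D^{i-m}$ to sum a geometric series:
$$\sum_{i=m}^{n-1}|\alpha_{i,m}|^{\,1-1/x}\le \sum_{k=0}^{\infty}D^{(1-1/x)k}=\frac{1}{1-D^{\,1-1/x}}.$$
Putting everything together,
$$|\gamma_{n,m}|\le \frac{D}{C\bigl(1-D^{\,1-1/x}\bigr)}\cdot\frac{|\beta_{n,p_j}|}{|\alpha_{m,p_j}|^{1/x}},$$
which is the desired bound with the stated constant $K_1$.

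The only real content is step~2: spotting that part (ii) of Lemma~\ref{lemma:Grieks}, which compares $\alpha_{i,p_j}$ to $\beta_{i,p_j}^x$ at the pivot $p_j$, is exactly what one needs to turn $|\alpha|/|\beta|$ into $|\alpha|^{1-1/x}$ so that the tail becomes summable. Once that substitution is made, everything reduces to a geometric series, so no further obstacle is expected; the exponent $1-1/x$ is positive precisely because $x>1$, guaranteeing convergence.
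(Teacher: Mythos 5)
Your proof is correct and matches the paper's argument essentially step for step: rewrite $\beta_{n,i+1}$ and $\alpha_{i,m}$ in terms of the $p_j$-pivoted cocycles, bound $|\gamma_i|\le D$ and $|\beta_i|\ge C$, invoke Lemma~\ref{lemma:Grieks}(ii) to replace $|\alpha_{i,p_j}|/|\beta_{i,p_j}|$ by $|\alpha_{i,p_j}|^{1-1/x}$, factor out $|\alpha_{m,p_j}|^{1-1/x}$, and sum the geometric series. The only difference is presentational (you fold the $|\beta_i|\ge C$ bound and the Grieks estimate into a single lower bound on $|\beta_{i+1,p_j}|$ before summing, whereas the paper keeps them separate); the underlying argument is identical.
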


\begin{proof}
We already know that $|\gamma_i|<D$ for all $i=p_j, \ldots,
q_j-1$, and we have:
\begin{align*}
|\gamma_{n,m}| & \leq \sum_{i=m}^{n-1} |\beta_{n,i+1}\gamma_i \alpha_{i,m}|
= \sum_{i=m}^{n-1} \frac{|\alpha_{i,p_j}|}{|\beta_{i,p_j}|} \cdot \frac{|\beta_{i,p_j}| \cdot |\beta_{n,i+1}| \cdot |\gamma_i| \cdot |\alpha_{i,m}|}{|\alpha_{i,p_j}|}\\
&\leq \sum_{i=m}^{n-1} \frac{|\alpha_{i,p_j}|}{|\beta_{i,p_j}|} \cdot \frac{1}{|\alpha_{m,p_j}|} \cdot D \cdot \frac{|\beta_{n,p_j}|}{|\beta_i|} \leq \frac{D}{C} \cdot \frac{|\beta_{n,p_j}|}{|\alpha_{m,p_j}|}
\sum_{i=m}^{n-1} \frac{|\alpha_{i,p_j}|}{|\beta_{i,p_j}|},
\end{align*}
where we used that $|\gamma_i|<D$ and $|\beta_i|>C$.

By Lemma \ref{lemma:Grieks} we have
$\left|\frac{\alpha_{i,p_j}}{\beta_{i,p_j}^x}\right| \leq 1$, so
$|\alpha_{i,p_j}| \leq |\beta_{i,p_j}|^x$ and
$|\alpha_{i,p_j}|^{1/x} \leq |\beta_{i,p_j}|$. Hence we obtain
\begin{align*}
|\gamma_{n,m}| & \le \frac{D}{C} \cdot
\frac{|\beta_{n,p_j}|}{|\alpha_{m,p_j}|} \sum_{i=m}^{n-1}
\frac{|\alpha_{i,p_j}|}{|\beta_{i,p_j}|}
&& \leq \frac{D}{C} \cdot \frac{|\beta_{n,p_j}|}{|\alpha_{m,p_j}|}
\sum_{i=m}^{n-1} |\alpha_{i,p_j}|^{1-1/x}\\
&= \frac{D}{C} \cdot
\frac{|\beta_{n,p_j}|}{|\alpha_{m,p_j}|^{1/x}} \sum_{i=m}^{n-1}
|\alpha_{i,m}|^{1-1/x}
&&\leq \frac{D}{C} \cdot
\frac{|\beta_{n,p_j}|}{|\alpha_{m,p_j}|^{1/x}} \sum_{i=m}^{n-1}
(D^{1-1/x})^{i-m}\\
&\leq \frac{D}{C} \cdot
\frac{|\beta_{n,p_j}|}{|\alpha_{m,p_j}|^{1/x}}
\cdot \frac{1}{1-D^{1-1/x}}
\end{align*}
\end{proof}

Using Lemmas \ref{lemma:Grieks} and \ref{lemma:gamma}, we can now find estimates for
$|df_{n,p_j}(v_j)|$ for $p_j \leq n \leq q_j$, which we can use to
translate our information on $\alpha$ and $\beta$ to information
on $a$ and $b$.

\begin{lemma}\label{lemma:vector}
For $p_j \leq n \leq q_j$, we have
\begin{enumerate}
\item[(i)]{$|df_{n,p_j}(v_j)|  \leq 2 \max\left\{|\alpha_{n,p_j}|,
K_1 \cdot |\alpha_{n,p_j}|^{1-1/x}|\beta_{n,p_j}|\right\}$, and}
\item[(ii)]{$\frac{1}{|df_{n,p_j}(v_j)|}
 \leq K_1 \cdot \frac{1}{|\alpha_{n,p_j}|}$.}
 \end{enumerate}

\end{lemma}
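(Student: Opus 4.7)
The plan is to first reduce both statements to bounds on $|b_{n,p_j}|$. Equation \eqref{eq:b} with $m = p_j$, together with $df_{p_j, p_j} = \mathrm{Id}$, gives $|df_{n, p_j}(v_j)| = |b_{n, p_j}|$. To relate $b$ to $\alpha$, $\beta$, $\gamma$, I would compute $|b_{n,p_j}|$ in the $\hat{f}$-coordinates: since $T_n$ is unitary, $|df_{n, p_j}(v_j)| = |D\hat{f}_{n, p_j}(0) \cdot S_j v_j|$, so writing $S_j v_j = (s_1, s_2)^T$ with $|s_1|^2 + |s_2|^2 = 1$ yields the basic identity
\begin{equation}\label{eq:bplan}
|df_{n, p_j}(v_j)|^2 = |\alpha_{n, p_j}|^2 |s_1|^2 + |\gamma_{n, p_j} s_1 + \beta_{n, p_j} s_2|^2.
\end{equation}

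The crucial extra input is that $S_j v_j$ is an eigenvector of $D\hat{f}_{q_j, p_j}(0)$ distinct from $(0,1)^T$, so its eigenvalue must be $\alpha_{q_j, p_j}$ rather than $\beta_{q_j, p_j}$; reading off the second row of the eigenequation gives $s_2 = \gamma_{q_j, p_j} s_1 / (\alpha_{q_j, p_j} - \beta_{q_j, p_j})$. Lemma \ref{lemma:trein}(i) together with $|\beta_{q_j, p_j}| \le 1$ yields $|\alpha_{q_j, p_j}|/|\beta_{q_j, p_j}| \le D$, so the denominator is at least $(1-D)|\beta_{q_j, p_j}|$; combined with $|\gamma_{q_j, p_j}| \le K_1 |\beta_{q_j, p_j}|$ from Lemma \ref{lemma:gamma}, the ratio $|s_2/s_1|$ is bounded by a constant, and hence $|s_1| \ge c_0$ for some $c_0 > 0$ depending only on $C$, $D$, $x$. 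Statement (ii) follows at once from \eqref{eq:bplan}: keeping only the first summand gives $|df_{n, p_j}(v_j)| \ge c_0 |\alpha_{n, p_j}|$, and absorbing $1/c_0$ into the constant yields (ii).

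For (i), I would substitute the eigenvector expression for $s_2$ into the second summand of \eqref{eq:bplan} and expand $\gamma_{q_j, p_j}$ via the matrix-product identity $\gamma_{q_j, p_j} = \gamma_{q_j, n}\,\alpha_{n, p_j} + \beta_{q_j, n}\,\gamma_{n, p_j}$. After cancellation (using $\beta_{n, p_j}\,\beta_{q_j, n} = \beta_{q_j, p_j}$), the numerator of $\gamma_{n, p_j} s_1 + \beta_{n, p_j} s_2$ simplifies to $\alpha_{q_j, p_j}\,\gamma_{n, p_j} + \alpha_{n, p_j}\,\beta_{n, p_j}\,\gamma_{q_j, n}$. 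I would then bound $|\gamma_{n, p_j}|$ and $|\gamma_{q_j, n}|$ via Lemma \ref{lemma:gamma}, and use the estimate $|\alpha_{q_j, p_j}| \le |\alpha_{n, p_j}|^{1-1/x} |\beta_{q_j, p_j}|$ (which follows from Lemma \ref{lemma:Grieks}(ii) applied at both $n$ and $q_j$, together with the monotonicity $|\beta_{q_j, p_j}| \le |\beta_{n, p_j}|$) to show that both summands in the numerator are at most a constant multiple of $|\alpha_{n, p_j}|^{1-1/x} |\beta_{n, p_j}|\, |\beta_{q_j, p_j}|$. Dividing by the denominator bound $|\alpha_{q_j, p_j} - \beta_{q_j, p_j}| \ge (1-D)|\beta_{q_j, p_j}|$ cancels the extra $|\beta_{q_j, p_j}|$ factor, leaving $|\gamma_{n, p_j} s_1 + \beta_{n, p_j} s_2| \le \mathrm{const} \cdot |\alpha_{n, p_j}|^{1-1/x} |\beta_{n, p_j}|$. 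Plugging into \eqref{eq:bplan} and applying $\sqrt{a^2+b^2} \le a+b \le 2\max\{a,b\}$ then yields (i).

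The main obstacle is spotting the right algebraic manoeuvre. The naive bound $|\gamma_{n, p_j}| \le K_1 |\beta_{n, p_j}|$ alone would only give $|b_{n, p_j}| \le |\alpha_{n, p_j}| + \mathrm{const} \cdot |\beta_{n, p_j}|$, which is strictly weaker than what is claimed; only by invoking the eigenvector equation at $q_j$ and re-expanding $\gamma_{q_j, p_j}$ through the matrix-product identity does the cancellation produce the additional factor $|\alpha_{n, p_j}|^{1-1/x}$ demanded by the lemma.
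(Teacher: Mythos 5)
Your proposal takes a genuinely different route from the paper. For both parts, the paper works with operator-norm bounds of the inverse of lower-triangular matrices: for (i) it writes $df_{n,p_j}(v_j) = (U_j\circ df_{q_j,n})^{-1}\bigl(b_{q_j,p_j}v_j\bigr)$, bounds the eigenvalue modulus by $|\alpha_{q_j,p_j}|$, and then bounds $\|D\hat f_{q_j,n}(0)^{-1}\|$ by $2\max\{1/|\alpha_{q_j,n}|,\; |\gamma_{q_j,n}|/|\alpha_{q_j,n}\beta_{q_j,n}|,\; 1/|\beta_{q_j,n}|\}$; for (ii) it bounds $1/|df_{n,p_j}(v_j)|$ by $\|D\hat f_{n,p_j}(0)^{-1}\|$. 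You instead compute $D\hat f_{n,p_j}(0)\cdot S_j v_j$ directly, using the eigenvector equation at $q_j$ to get the components of $S_j v_j$. This forces the algebraic manoeuvre with the matrix-product identity $\gamma_{q_j,p_j}=\gamma_{q_j,n}\alpha_{n,p_j}+\beta_{q_j,n}\gamma_{n,p_j}$, which does produce the crucial cancellation; the paper avoids it entirely by never needing the eigenvector's components.

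Two specific issues. First, your cited route for $|\alpha_{q_j,p_j}|\le |\alpha_{n,p_j}|^{1-1/x}|\beta_{q_j,p_j}|$ does not work as stated: Lemma~\ref{lemma:Grieks}(ii) at $n$ gives $|\alpha_{n,p_j}|^{1-1/x}\le |\beta_{n,p_j}|^{x-1}$, and monotonicity gives $|\beta_{q_j,p_j}|^{x-1}\le |\beta_{n,p_j}|^{x-1}$, so both sides of what you want are bounded by the same quantity but in the wrong direction to compare. The inequality is nevertheless true: from Lemma~\ref{lemma:Grieks}(i) one gets $|\alpha_{q_j,n}|\le |\beta_{q_j,n}|^x\le |\beta_{q_j,n}|$, from Lemma~\ref{lemma:Grieks}(ii) one gets $|\alpha_{n,p_j}|^{1/x}\le |\beta_{n,p_j}|$, and multiplying these yields $|\alpha_{q_j,n}||\alpha_{n,p_j}|^{1/x}\le |\beta_{q_j,n}||\beta_{n,p_j}|=|\beta_{q_j,p_j}|$, which is equivalent to what you need. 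So Lemma~\ref{lemma:Grieks}(i) is essential, not just (ii). Second, the constants you produce are strictly larger than $K_1$: in (ii) your lower bound on $|s_1|$ gives $1/c_0=\sqrt{1+K_1^2/(1-D)^2}>K_1$, and in (i) the division by $|\alpha_{q_j,p_j}-\beta_{q_j,p_j}|\ge (1-D)|\beta_{q_j,p_j}|$ introduces an extra $1/(1-D)$ on top of a factor $2K_1$. Since the only downstream use of the lemma is via $K_2$ in Theorem~\ref{thm:trein}, any uniform constant depending on $C$, $D$, $x$ will do, so this is harmless — but the lemma as literally stated with the specific constant $K_1$ is not proved by your argument.
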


\begin{proof}
We have
\begin{align*}
|df_{n, p_j}(v_j)|&=|(U_j \circ df_{q_j,n})^{-1}\left( U_j \circ df_{q_j,p_j}v_j \right)|=|(U_j \circ df_{q_j,n})^{-1}\left(b_{q_j,p_j} v_j \right)|\\
&\leq |b_{q_j,p_j}| \cdot ||(U_j \circ
df_{q_j,n})^{-1}||=|\alpha_{q_j,p_j}|\cdot ||\left(S_j \circ U_j
\circ
df_{q_j,n}\circ T_n^{-1}\right)^{-1}||\\
&=|\alpha_{q_j,p_j}|\cdot \| \left(
\begin{matrix} \alpha_{q_j,n}&0\\
\gamma_{q_j,n}& \beta_{q_j,n}
\end{matrix}
\right)^{-1}\|\\
& \leq |\alpha_{q_j,p_j}|\cdot 2
\max\left\{\frac{1}{|\alpha_{q_j,n}|},\frac{|\gamma_{q_j,n}|}{|\alpha_{q_j,n}\beta_{q_j,n}|},\frac{1}{|\beta_{q_j,n}|}\right\}.
\end{align*}
Using Lemma \ref{lemma:gamma} we can now calculate that
$$
\frac{|\gamma_{q_j,n}|}{|\alpha_{q_j,n}|\cdot |\beta_{q_j,n}|} \leq K_1
\cdot
\frac{|\alpha_{n,p_j}|^{1-1/x}|\beta_{n,p_j}|}{|\alpha_{q_j,p_j}|}.
$$
Since $\left|\frac{\alpha_{q_j,n}}{\beta_{q_j,n}^x}\right| \leq 1$ (by Lemma
\ref{lemma:Grieks}), we have $|\alpha_{q_j,n}| \leq
|\beta_{q_j,n}|^x \leq |\beta_{q_j,n}|$ and therefore
$\frac{1}{|\alpha_{q_j,n}|} \geq \frac{1}{|\beta_{q_j,n}|}$. Hence we obtain
\begin{align*}
|df_{n, p_j}(v_j)|&\leq |\alpha_{q_j,p_j}| \cdot 2 \max\left\{\frac{1}{|\alpha_{q_j,n}|}, K_1 \cdot \frac{|\alpha_{n,p_j}|^{1-1/x}|\beta_{n,p_j}|}{|\alpha_{q_j,p_j}|} \right\}\\
&= 2 \max\left\{|\alpha_{n,p_j}|, K_1 \cdot
|\alpha_{n,p_j}|^{1-1/x}|\beta_{n,p_j}|\right\}.
\end{align*}

To prove inequality (ii), note that
\begin{align*}
\frac{1}{|df_{n,p_j}v_j|}&=\frac{|(df_{n,p_j})^{-1}(df_{n,p_j}v_j)|}{|df_{n,p_j}v_j|}\leq ||df_{n,p_j}(0)^{-1}||\\
& =||\left(T_n \circ df_{n,p_j}(0) \circ S_j^{-1}\right)^{-1}||
 \leq 2 \max\left\{\frac{1}{|\alpha_{n,p_j}|},\frac{1}{|\beta_{n,p_j}|},\frac{|\gamma_{n,p_j}|}{|\alpha_{n,p_j}\beta_{n,p_j}|}\right\}.
\end{align*}

By Lemma \ref{lemma:Grieks} we have that $
\left|\frac{\alpha_{n,p_j}}{\beta_{n,p_j}^x}\right| \leq 1$, which implies that we may remove $\frac{1}{|\beta_{n,p_j}|}$ from
the maximum. Now Lemma \ref{lemma:gamma} implies that
\begin{align*}
\frac{1}{|df_{n,p_j}v_j|} &\leq \max\left\{\frac{1}{|\alpha_{n,p_j}|},\frac{1}{|\alpha_{n,p_j}\beta_{n,p_j}|} \cdot K_1\cdot |\beta_{n,p_j}|\right\}\\
&=K_1 \cdot \frac{1}{|\alpha_{n,p_j}|}.
\end{align*}
\end{proof}

We can now translate lemma \ref{lemma:Grieks} to the following generalization of Proposition \ref{prop:trains}.

\begin{thm}\label{thm:trein}
There exists a constant $K_2=K_2(C,D,k,x)$
such that for each $p_j \le m \le n \le q_j$ we have
\begin{enumerate}
\item[(i)]{$\left|\frac{b_{n, p_j}}{a_{n, p_j}^x}\right| \leq K_2 D^{-\frac{k-x}{x}(n-p_j)} $,}
\item[(ii)]{$\left|\frac{b_{n, m}}{a_{n, m}^x}\right| \leq K_2 D^{-2^j\frac{x+1}{x}} $, and}
\item[(iii)]{$\left|\frac{b_{q_j, n}}{a_{q_j, n}^x}\right| \leq K_2$.}
\end{enumerate}
\end{thm}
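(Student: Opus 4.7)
The plan is to translate the bounds on $\alpha$ and $\beta$ from Lemma \ref{lemma:Grieks} into bounds on $a$ and $b$, using Lemma \ref{lemma:vector} as the bridge: since $|b_{n,m}| = |df_{n,p_j}(v_j)|/|df_{m,p_j}(v_j)|$ and $|\det Df_{n,m}| = |\alpha_{n,m}||\beta_{n,m}|$, every ratio of the form $|b_{n,m}|^{x+1}/|\det Df_{n,m}|^x$ can be expressed in the $\alpha,\beta$-coordinates once the two cases of the maximum in Lemma \ref{lemma:vector}(i) have been unpacked.

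I would attack (iii) first, because it is the cleanest. The vector $v_j$ is an eigenvector of $U_j \circ Df_{q_j,p_j}$, so $|df_{q_j,p_j}(v_j)| = |b_{q_j,p_j}| = |\alpha_{q_j,p_j}|$, and thus $|b_{q_j,n}| = |\alpha_{q_j,p_j}|/|df_{n,p_j}(v_j)|$. Expanding $|\det Df_{q_j,n}| = |\alpha_{q_j,p_j}||\beta_{q_j,p_j}|/(|\alpha_{n,p_j}||\beta_{n,p_j}|)$ and absorbing the factor $|df_{n,p_j}(v_j)|^{-(x+1)}$ via Lemma \ref{lemma:vector}(ii), the whole expression collapses to a constant multiple of $|\alpha_{q_j,n}/\beta_{q_j,n}^x|$, which Lemma \ref{lemma:Grieks}(i) bounds by $1$.

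For (i) and (ii), I would apply Lemma \ref{lemma:vector}(i) to the numerator and Lemma \ref{lemma:vector}(ii) to the denominator of $|b_{n,m}|$, then split into two cases according to the maximum. The ``easy'' alternative, where $|df_{n,p_j}(v_j)| \leq 2|\alpha_{n,p_j}|$, collapses the ratio to $|\alpha_{n,m}/\beta_{n,m}^x|$, which for (i) is bounded by $1$ via Lemma \ref{lemma:Grieks}(ii) and for (ii) is bounded by $D^{-2^j}$ via Lemma \ref{lemma:Grieks}(iii); the latter is even better than the target $D^{-2^j(x+1)/x}$. The ``hard'' alternative, where $|df_{n,p_j}(v_j)| \leq 2K_1|\alpha_{n,p_j}|^{1-1/x}|\beta_{n,p_j}|$, reduces after algebra to
\[
\left|\frac{b_{n,m}}{a_{n,m}^x}\right| \leq (2K_1^2)^{x+1} \left|\frac{\beta_{n,p_j}^x}{\alpha_{n,p_j}}\right|^{1/x} \left|\frac{\beta_{m,p_j}^x}{\alpha_{m,p_j}}\right|.
\]
For (i), since $m = p_j$ the second factor disappears and the first is bounded by $(D/C^{1/x})^{n-p_j} \leq D^{-(k-x)(n-p_j)/x}$ using $C \geq D^k$; this yields the target bound.

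The crux, and where I expect the real work to lie, is the upper bound $|\beta_{r,p_j}^x/\alpha_{r,p_j}| \leq K D^{-2^j}$ for $r \in [p_j, q_j]$ needed to close (ii). The observation that makes this work is that after unwinding the formulas for $\alpha$ and $\beta$, this quantity coincides with
\[
\frac{|df_{r,p_{j-1}}(v_{j-1})|^{x+1}}{|\det Df_{r,p_j}| \cdot |df_{p_j,p_{j-1}}(v_{j-1})|^{x+1}},
\]
which is precisely the expression whose reciprocal is thresholded by $D^{2^j}$ in Definition \ref{def:trains}. The minimality of $q_j$ then forces the strict inequality $< D^{-2^j}$ for every $r < q_j$, and passing from $r = q_j - 1$ to $r = q_j$ costs only a single-step factor bounded by $D^x/C \leq D^{x-k}$. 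Feeding this back into the displayed bound above produces the factor $D^{-2^j(1/x + 1)} = D^{-2^j(x+1)/x}$, giving (ii) with a constant $K_2$ depending on $C, D, k, x$ through $K_1$ and $D^{x-k}$.
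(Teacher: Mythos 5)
Your proposal is correct and follows essentially the same route as the paper: both express $\left|b_{n,m}/a_{n,m}^x\right|$ via $|df_{\cdot,p_j}(v_j)|$ and the determinant, insert Lemma~\ref{lemma:vector}, and reduce to the $\alpha,\beta$-estimates of Lemma~\ref{lemma:Grieks}, arriving at $K_2=(2K_1^2)^{x+1}D^{-(k-x)(x+1)/x}$. The only cosmetic difference is that you re-derive the ``crux'' bound $\left|\beta_{r,p_j}^x/\alpha_{r,p_j}\right|\le D^{x-k}D^{-2^j}$ from the minimality of $q_j$, whereas the paper simply invokes Lemma~\ref{lemma:Grieks}(iii) (with $m=p_j$), which already records exactly this inequality.
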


\begin{proof}
For part (i) we can use Lemma \ref{lemma:vector} and Lemma \ref{lemma:Grieks} to see that
\begin{align*}
\left| \frac{b_{n,p_j}}{a_{n,p_j}^x}\right| &= \frac{|df_{n,p_j}(v_j)|^{x+1}}{|\textrm{det}Df_{n,p_j}(0)|^x}\\
& \leq \frac{2^{x+1}}{|\alpha_{n,p_j}\beta_{n,p_j}|^x} \max\left\{|\alpha_{n,p_j}|,K_1 |\alpha_{n,p_j}|^{1-1/x}|\beta_{n,p_j}|\right\}^{x+1}\\
&= 2^{x+1} \cdot \max\left\{\frac{|\alpha_{n,p_j}|}{|\beta_{n,p_j}|^x},K_1^{x+1}\frac{|\beta_{n,p_j}|}{|\alpha_{n,p_j}|^{1/x}}\right\}\\
& \leq 2^{x+1} K_1^{x+1}D^{-\frac{k-x}{x}(n-p_j)}.
\end{align*}
Part (ii) and (iii) follow from the same lemmas:

\begin{align*}
\left| \frac{b_{n,m}}{a_{n,m}^x}\right| &= \frac{|df_{n,p_j}(v_j)|^{x+1}}{|\textrm{det}Df_{n,m}(0)|^x\cdot |df_{m,p_j}(v_j)|^{x+1}}\\
& \leq \frac{2^{x+1}}{|\alpha_{n,m}\beta_{n,m}|^x} \max\left\{|\alpha_{n,p_j}|,K_1|\alpha_{n,p_j}|^{1-1/x}|\beta_{n,p_j}| \right\}^{x+1} \left( \frac{K_1}{|\alpha_{n,p_j}|}\right)^{x+1}\\
& = (2K_1)^{x+1} \max\left\{\frac{|\alpha_{n,m}|}{\beta_{n,m}|^x},K_1^{x+1}\frac{|\beta_{n,p_j}|}{|\alpha_{n,p_j}|^{1/x}}\frac{|\beta_{m,p_j}|^x}{|\alpha_{m,p_j}|} \right\}\\
& \leq (2K_1)^{x+1} \max\left\{D^{-2^j},K_1^{x+1} \left(D^{2^j+(k-x)} \right)^{-1/x}\left(D^{2^j+(k-x)} \right)^{-1} \right\}\\
& \leq (2K_1^2)^{x+1} D^{-(k-x)\frac{x+1}{x}} D^{-2^j \frac{x+1}{x}}, \textrm{ and}\\
\left| \frac{b_{q_j,n}}{a_{q_j,n}^x}\right| &= \frac{|df_{q_j,p_j}(v_j)|^{x+1}}{|\textrm{det}Df_{q_j,n}(0)|^x\cdot |df_{n,p_j}(v_j)|^{x+1}}\\
& \leq \frac{|\alpha_{q_j,p_j}|^{x+1}}{|\alpha_{q_j,n}\beta_{q_j,n}|^x} \cdot K_1^{x+1} \frac{1}{|\alpha_{n,p_j}|^{x+1}}\\
& = K_1^{x+1} \frac{|\alpha_{q_j,n}|}{|\beta_{q_j,n}|^x} \leq K_1^{x+1}.
\end{align*}
So we can set $K_2=(2K_1^2)^{x+1} D^{-(k-x)\frac{x+1}{x}}$.
\end{proof}

\medskip

\begin{remark}
When we continue the proof of the general case we will work with the sequence the sequence $\tilde{f}_1,
\tilde{f}_2,\ldots \tilde{f}_{p_1-1}, V_1 \circ W_{p_1}^{-1},
\tilde{f}_{p_1}, \ldots, \tilde{f}_{p_2-1}, V_2 \circ
W_{p_2}^{-1}, \tilde{f}_{p_2}, \ldots$ instead of $(f_n)$. The basins of
these two sequences must be equivalent since at any time the composed functions differ only by multiplication
with one last unitary matrix.

Hence we will write $f_n$ rather than $\tilde{f}_n$ and
$M_j=V_j \circ W_{p_j}^{-1}$. We will study the basin of
the sequence $f_0,f_2, \ldots, f_{p_1-1},M_1,f_{p_1}, \ldots$
where $Df_n(0)=\left(\begin{smallmatrix} a_{n}&0\\
c_{n}&b_{n}
\end{smallmatrix} \right)$ and all $M_j$ are unitary.
\end{remark}

\section{Completion of the proof in the diagonal case}

Let us recall the statement of Theorem \ref{thm:main}.

\medskip

{\bf Theorem \ref{thm:main}.}
\emph{Let $(f_n)$ be a sequence of automorphisms of $\mathbb C^2$ which satisfies the conditions in Conjecture \ref{conj:main}, and suppose that the maps $f_n$ all have order of contact $k$. Assume further that the linear part of each map $f_n$ is diagonal. Then if $D^{k+1} < C$, the basin of attraction $\Omega_{(f_n)}$ is biholomorphic to $\mathbb C^2$.}

\medskip

The proof will be completed in two steps. In the first step we will find a non-autonomous conjugation on each train by linear maps such that in each train the new maps all contract the same coordinate most rapidly. This means that we can apply Lemma \ref{lemma:dominant2} below on each train separately. In the second step we worry about what happens at times $p_j$ where we switch from one train to the other. After these two steps we construct the maps $(g_n)$ and $(h_n)$ on each of the trains using the following Lemma, a special case of Lemma \ref{lemma:dominant}.

\begin{lemma}\label{lemma:dominant2}
Suppose that each map of the sequence $(f_n)$ has linear part of the form $(z, w) \mapsto (a_n z, b_n w)$, with $|b_n| \le |a_n|$. Then for any $k \ge 2$ we can find bounded sequences $(g_n)$ and $(h_n)$, with the maps $g_n$ lower triangular, such that Diagram \eqref{diagram} commutes up to jets of order $k$.
\end{lemma}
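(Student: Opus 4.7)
The plan is to construct $(h_n)$ and $(g_n)$ by induction on Taylor degree from $d=2$ up through $d=k$, solving one cohomological equation per monomial. Write $f_n(z,w) = (a_n z + P_n(z,w),\, b_n w + Q_n(z,w))$ with $P_n, Q_n$ uniformly bounded polynomials vanishing to order $2$, and seek $h_n(z,w) = (z+\phi_n(z,w),\, w+\psi_n(z,w))$ with $\phi_n,\psi_n$ polynomials of degree $\le k$ vanishing to order $2$, together with $g_n(z,w) = (a_n z,\, b_n w + H_n(z))$, so that $h_{n+1}\circ f_n - g_n\circ h_n = O(\|z\|^{k+1})$. Suppose inductively that every Taylor coefficient of $\phi_n, \psi_n, H_n$ of total degree strictly less than $d$ has been chosen and is uniformly bounded in $n$.

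Equating the degree-$d$ part of the first coordinate monomial by monomial yields, for each $(\alpha,\beta)$ with $\alpha+\beta=d$, an equation
$$a_n\,\phi_n^{(\alpha,\beta)} - a_n^\alpha b_n^\beta\,\phi_{n+1}^{(\alpha,\beta)} = R_n^{(\alpha,\beta)},$$
where $R_n^{(\alpha,\beta)}$ is a fixed polynomial expression in $P_n^{(\alpha,\beta)}$ and in the previously constructed, lower-degree coefficients of $\phi_\bullet,\psi_\bullet$; hence $R_n^{(\alpha,\beta)}$ is uniformly bounded. The decisive observation is that the multiplier $\rho_n := a_n^{\alpha-1} b_n^\beta$ satisfies $|\rho_n| \le D < 1$ whenever $\alpha+\beta=d\ge 2$: if $\alpha\ge 1$ this is the trivial bound $|\rho_n| \le D^{d-1}$, while if $\alpha=0$ and $\beta=d$ one writes $|\rho_n| = |b_n|^d/|a_n| \le |b_n|^{d-1} \le D^{d-1}$, and \emph{this is the only place where the hypothesis $|b_n|\le|a_n|$ is used}. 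With this contraction in hand one solves the equation backward in $n$ via
$$\phi_n^{(\alpha,\beta)} = \sum_{j=0}^{\infty} \Big(\prod_{i=0}^{j-1}\rho_{n+i}\Big)\,\frac{R_{n+j}^{(\alpha,\beta)}}{a_{n+j}},$$
a geometrically convergent series yielding a uniformly bounded solution.

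The second coordinate is split by whether a monomial $z^\alpha w^\beta$ depends on $w$. For $(\alpha,\beta)$ with $\beta\ge 1$ and $\alpha+\beta=d$, one solves
$$b_n\,\psi_n^{(\alpha,\beta)} - a_n^\alpha b_n^{\beta}\,\psi_{n+1}^{(\alpha,\beta)} = S_n^{(\alpha,\beta)}$$
by the same backward summation; here the multiplier is $a_n^\alpha b_n^{\beta-1}$, of modulus at most $D^{d-1}<1$, so no assumption beyond uniform attraction is needed. For $(\alpha,\beta)$ with $\beta=0$ one simply sets $\psi_n^{(\alpha,0)}=0$ and defines $H_n^{(\alpha,0)}$ to be the resulting degree-$d$ inhomogeneity; this is uniformly bounded by construction.

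The only bookkeeping concern, and the place where one could in principle slip up, is verifying that at each inductive step the inhomogeneities $R_n^{(\alpha,\beta)}$ and $S_n^{(\alpha,\beta)}$ really are uniformly bounded in $n$. This is automatic: they are fixed polynomial expressions in finitely many quantities which are uniformly bounded either by the hypothesis \eqref{eq:uniform} (for the coefficients of $f_n$) or by the inductive hypothesis (for the lower-degree data), and $|a_n|,|b_n|\ge C>0$ makes all divisions harmless. Running the induction from $d=2$ to $d=k$ therefore produces uniformly bounded sequences $(\phi_n),(\psi_n),(H_n)$, and hence uniformly bounded $(h_n)$ and $(g_n)$ with each $g_n$ a lower triangular polynomial of degree $\le k$, making Diagram \eqref{diagram} commute up to jets of order $k$ as required.
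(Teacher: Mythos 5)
Your proof is correct and rests on the same mechanism as the paper's: for each monomial coefficient one solves a cohomological equation by summation along the forward orbit, the point being that the multiplier is uniformly contractive, and the case $\alpha=0$ in the first coordinate (pure $w$-powers) is exactly where $|b_n|\le|a_n|$ is needed. The paper's sketch of Theorem~\ref{thm:triangulization} organizes this slightly differently by first invoking a preliminary conjugation (from Lemma~\ref{lemma:dominant} / \cite{Peters}) that removes every monomial of degree $\le k$ except $c_n w^k$ in the first slot and $d_n z^k$ in the second, and then treating the single remaining $w^k$-coefficient by the affine recursion $\alpha_n = \frac{b_n^k}{a_n}\alpha_{n+1}+\frac{c_n}{a_n}$; your degree-by-degree induction over all monomials carries out the same computation in a self-contained way.
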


\medskip

\noindent {\bf Step 1:} Directing the trains.

\medskip

We assume that the automorphisms $f_0, f_1, \ldots$ satisfy the conditions of Theorem \ref{thm:main}, and that we have constructed the increasing sequence $p_0, q_0, p_1, \ldots$ such that Proposition \ref{prop:trains} holds. We will change coordinates with a sequence of linear maps $l_n$ of the form
\begin{equation}
l_n(z,w) = (\theta_n z, \tau_n w).
\end{equation}
We write $\tilde{f}_n = l_{n+1} \circ f_n\circ  l_n^{-1}$ for the new maps, and we immediately see that $\tilde{f}_n$ is of the form
\begin{equation}
\tilde{f}_n (z,w) = (\tilde{a}_n z, \tilde{b}_n w) + O(k),
\end{equation}
where
\begin{equation}
\begin{aligned}
\tilde{a}_n & = \frac{\theta_{n+1}}{\theta_n} a_n, \; \; \mathrm{and}\\
\tilde{b}_n & = \frac{\tau_{n+1}}{\tau_n} b_n.
\end{aligned}
\end{equation}

Our goal is to define the maps $(l_n)$ in such a way that on each interval $I_j = [p_j, p_{j+1})$ the maps $\tilde{f}_n$ satisfy the property $\tilde{a}_n \ge \tilde{b}_n$ (if $j$ is odd), and $\tilde{a}_n \le \tilde{b}_n$ (if $j$ is even). In order for the higher order terms of the maps $\tilde{f}_n$ not to blow up we also require that
\begin{equation}\label{boundeddistortion}
\theta_n^k \ge \tau_n, \; \; \mathrm{and} \; \theta_n \le \tau_n^k.
\end{equation}

In order to simplify the notation we let $j$ be odd; the results are analogous for $j$ even. We assume that
$$
\theta_p \ge D^{-\frac{k}{k^2 - 1} k^j}
$$
and
$$
\tau_p \ge D^{-\frac{1}{k^2 - 1} k^j}
$$
and that the conditions in \eqref{boundeddistortion} are satisfied for $n = p$. Then for $n+1 \in I_j$ we recursively define
\begin{equation}
\theta_{n+1} = \left\{ \begin{aligned}
\theta_n \; \; \mathrm{if} \; |a_n| \ge |b_n|, \; \mathrm{or} \\
\frac{|b_n|}{|a_n|} \theta_n \; \; \mathrm{if} \; |b_n| > |a_n|.
\end{aligned}\right.
\end{equation}
Similarly we define
\begin{equation}
\tau_{n+1} = \left\{ \begin{aligned}
& \tau_n \; \; \; \; \; \; \; \; \mathrm{if} \; |b_n| \ge |a_n|, \; \mathrm{or} \\
& \min(\frac{|a_n|}{|b_n|} \tau_n, \theta_{n+1}^k) \; \; \mathrm{if} \; |a_n| > |b_n|.
\end{aligned}\right.
\end{equation}
For convenience we will often write
$$
\sigma_{n,m} = ^{D^{-1}}\log\left|\frac{a_{n,m}}{b_{n,m}}\right|.
$$
\begin{lemma}
For $p \le n \le q$ we have
\begin{equation}
\tau_n^k \ge D^{-k \cdot \sigma_{n, p}} \theta_n.
\end{equation}
\end{lemma}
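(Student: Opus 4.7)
The plan is to prove this by induction on $n$, for $p \le n \le q$. At $n=p$ we have $\sigma_{p,p}=0$, so the inequality reduces to $\tau_p^k \ge \theta_p$, which is one half of the bounded distortion hypothesis \eqref{boundeddistortion} imposed at the start of the train. So the real content lies entirely in the inductive step.

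For the inductive step I would split into three cases according to the update rules defining $(\theta_{n+1},\tau_{n+1})$: (a) $|a_n|\ge |b_n|$ with $\tau_{n+1}=(|a_n|/|b_n|)\tau_n$; (b) $|a_n|\ge|b_n|$ with the cap $\tau_{n+1}=\theta_{n+1}^k$ active; and (c) $|b_n|>|a_n|$. Cases (a) and (c) are direct algebraic verifications. In (a), $\theta_{n+1}=\theta_n$ and $\tau_{n+1}^k$ picks up exactly the factor $(|a_n|/|b_n|)^k = D^{-k\log_{D^{-1}}|a_n/b_n|}$, which is precisely how $D^{-k\sigma_{n,p}}$ changes from step $n$ to $n+1$, so the inductive hypothesis propagates with equality. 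In (c), $\tau$ is unchanged while $\theta$ grows by the factor $|b_n|/|a_n|>1$ and $\sigma_{n+1,p}<\sigma_{n,p}$; a quick computation shows the inequality is preserved with slack of order $(|a_n|/|b_n|)^{k-1}<1$.

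The only non-routine case is (b), the cap. Here $\theta_{n+1}=\theta_n$ and $\tau_{n+1}=\theta_n^k$, so what must be shown is the standalone inequality $\theta_n^{k^2-1}\ge D^{-k\sigma_{n+1,p}}$. My plan is to combine two ingredients: first, the sequence $(\theta_n)$ is non-decreasing by construction, so $\theta_n \ge \theta_p \ge D^{-\tfrac{k}{k^2-1}k^j}$, which yields $\theta_n^{k^2-1}\ge D^{-k^{j+1}}$; second, because $q_j$ was chosen minimally in the definition of the engine, the ratio $|a_{n+1,p_j}/b_{n+1,p_j}|$ cannot exceed $D^{-k^j}$ except for a bounded multiplicative loss of at most $1/C$ incurred only at the final step $q_j-1\to q_j$ (this is exactly \eqref{eq:train2}). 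Putting these together gives $D^{-k\sigma_{n+1,p}}\le D^{-k^{j+1}}$ up to a fixed constant, which is enough to close the induction. The main technical obstacle is absorbing that constant factor cleanly; in practice one either strengthens the numerical constant in the hypothesis on $\theta_p$, or argues more carefully using the strict bound $|a_{n+1,p_j}/b_{n+1,p_j}|<D^{-k^j}$ that holds for $n+1<q_j$ by minimality of $q_j$, and handles the boundary case $n+1=q_j$ separately.
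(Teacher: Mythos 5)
Your approach matches the paper's (which simply says ``follows easily by induction''), and your treatment of the base case and of cases (a) and (c) is correct. You are also right that case (b), where the cap $\tau_{n+1}=\theta_{n+1}^k$ is active, is the only non-trivial step, and that it reduces to the standalone inequality $\theta_n^{k^2-1}\ge D^{-k\sigma_{n+1,p}}$. Your combination of $\theta_n\ge\theta_p\ge D^{-\frac{k}{k^2-1}k^j}$, giving $\theta_n^{k^2-1}\ge D^{-k^{j+1}}$, with the strict bound $\sigma_{n+1,p}<k^j$ for $n+1<q$ (by minimality of $q_j$), does close the induction for every step strictly inside the engine.

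The boundary case $n+1=q$ that you flagged is a genuine gap, and as you suspect it cannot be closed in the form stated: one can arrange $|a_m|=D$, $|b_m|=C$ for all $m$ in the engine and take $\theta_p$ equal to its lower bound, so that $\theta_{q-1}^{k^2-1}=D^{-k^{j+1}}$ exactly while $\sigma_{q,p}$ strictly exceeds $k^j$, and then $\theta_{q-1}^{k^2-1}\ge D^{-k\sigma_{q,p}}$ fails by a factor controlled by $D/C$. Of your two proposed fixes, the second (handle $n+1=q$ separately) is the one that works cleanly, and you should carry it out: the only way the lemma is subsequently used is through the weaker consequence $\tau_q^k/\theta_q\ge D^{-k^{j+1}}$ (this is the base case for the induction in the next lemma, and it also yields $\tau_q\ge D^{-\frac{k}{k^2-1}k^{j+1}}$ in the final lemma of Step 1), and in case (b) that weaker bound is automatic, since $\tau_{n+1}^k/\theta_{n+1}=\theta_{n+1}^{k^2-1}\ge\theta_p^{k^2-1}\ge D^{-k^{j+1}}$. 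So the lemma as printed is slightly overstated at $n=q$, but everything the proof actually needs is preserved; your analysis is more careful than the paper's, and only lacked this last observation to finish.
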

\begin{proof}
Follows easily by induction on $n$.
\end{proof}

Note that it follows immediately from the recursive definition of $\tau_{n+1}$ and the fact that $\theta_n$ can never decrease, that $\tau_n \le \theta_n^k$ for all $n \in [p, r]$.

\begin{lemma}
For $p \le n \le r$ we have that $\theta_n \le \tau_n^k$.
\end{lemma}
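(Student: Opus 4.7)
My plan is to recognize this lemma as an immediate corollary of the previous lemma combined with Proposition \ref{prop:trains}. The previous lemma gives the stronger estimate $\tau_n^k \ge D^{-k\sigma_{n,p}}\theta_n$ on the engine $[p,q]$, so it suffices to see that the factor $D^{-k\sigma_{n,p}}$ is at least $1$. Since we are in an odd train (as stipulated just before the recursive definitions), equation \eqref{eq:train1} of Proposition \ref{prop:trains} says $|a_{n,p}/b_{n,p}| \ge 1$ throughout the engine, i.e.\ $\sigma_{n,p} \ge 0$ in the $\log_{D^{-1}}$-convention used in the paper. Because $0 < D < 1$, having $\sigma_{n,p} \ge 0$ forces $D^{-k\sigma_{n,p}} \ge 1$, and we obtain $\tau_n^k \ge \theta_n$ at once.

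If the intended range $[p,r]$ is meant to extend past the engine into the wagon $[q_j,p_{j+1})$, this one-line argument no longer applies, because only $\sigma_{n,q_j} > -k^{j+1}$ is available there, from \eqref{eq:train3}. In that situation my fallback plan is a step-by-step induction directly from the recursive definitions of $\theta_{n+1}$ and $\tau_{n+1}$, tracking the quantity $\log \tau_n^k - \log \theta_n$: in Case 1 ($|a_n| \ge |b_n|$) it either grows by $k\log|a_n/b_n|\ge 0$ or is reset to the non-negative value $(k^2-1)\log\theta_{n+1}$ via capping; in Case 2 ($|b_n| > |a_n|$) it decreases by exactly $\log|b_n/a_n|$. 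The cushion of size at least $k^j\log(1/D)$ built up during the engine (from \eqref{eq:train2}) is then invoked to absorb all Case 2 losses on the wagon.

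The main obstacle in such a wagon extension is the capping events: a cap destroys the accumulated excess and replaces it by $(k^2-1)\log\theta_{n+1}$, which blocks a clean telescoping argument and forces one to re-start the bookkeeping between consecutive caps. For the engine itself, none of this is necessary, and the proof collapses to the single short computation in the first paragraph.
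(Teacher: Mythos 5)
Your two-stage plan — the one-line engine argument via the previous lemma together with \eqref{eq:train1}, and then a step-by-step induction on the wagon that tracks $\log\tau_n^k - \log\theta_n$, re-starting the bookkeeping at each cap and closing with \eqref{eq:train3} — is exactly the paper's proof. The lemma's range $[p,r]$ does indeed run over the whole train ($r = p_{j+1}$, as the subsequent lemma about $\theta_r$ and $\tau_r$ confirms), so the wagon argument you sketch as a ``fallback'' is in fact the main content; the paper formalizes your ``re-start between caps'' idea by proving inductively that $\tau_n^k/\theta_n \ge D^{-k^{j+1} - \sigma_{n,m}}$ for every $m$ with $q \le m \le n$ and $\sigma_{n,m}\le 0$, and then invokes \eqref{eq:train3}.

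One quantitative slip is worth correcting before you execute the sketch. You describe the engine cushion as ``at least $k^j\log(1/D)$,'' citing \eqref{eq:train2}; but $k^j$ is the lower bound on $\sigma_{q,p}$, whereas the tracked quantity $\log\tau_q^k - \log\theta_q$ is bounded below by $k\,\sigma_{q,p}\log(1/D) \ge k^{j+1}\log(1/D)$ because of the factor $k$ in the previous lemma. That extra factor of $k$ is not slack: your Case~2 step decrements $\log\tau_n^k - \log\theta_n$ by $\log|b_n/a_n|$ (no factor $k$), while Case~1 increments it by $k\log|a_n/b_n|$, so over a cap-free stretch $[m,n)$ the worst-case net loss is bounded only by $-\sigma_{n,m}\log(1/D) < k^{j+1}\log(1/D)$ via \eqref{eq:train3}; a cushion of size $k^j\log(1/D)$ would be strictly insufficient. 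The same threshold governs the caps: the reset value $(k^2-1)\log_{D^{-1}}\theta_{n+1}$ must itself be $\ge k^{j+1}$, which holds because $\theta$ is non-decreasing from the starting hypothesis $\theta_p \ge D^{-\frac{k}{k^2-1}k^j}$. Spelling out that last observation is what makes your ``re-start the bookkeeping'' actually close, and it is precisely what the paper's quantified inductive statement checks at $m=n+1$.
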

\begin{proof}
For $n \le q$ this is guaranteed by the previous lemma. From $q$ on it follows by induction on $n$ that
\begin{equation}
\frac{\tau_n^k}{\theta_n} \ge D^{-k^{j+1} - \sigma_{n,m}},
\end{equation}
for all $q \le m \le n$ for which $\sigma_{n,m} \le 0$. The statement in the Lemma now follows from \eqref{eq:train3}.
\end{proof}

Finally, we need to check that $\theta_r$ and $\tau_r$ are large enough to satisfy the starting hypothesis for the next interval $I_{j+1}$.

\begin{lemma}
We have $\theta_r \ge D^{-\frac{1}{k^2 - 1} k^{j+1}}$ and $\tau_p \ge D^{-\frac{k}{k^2 - 1} k^{j+1}}$.
\end{lemma}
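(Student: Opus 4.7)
The plan starts from the observation that the two exponents $\frac{k}{k^2-1}k^j$ and $\frac{1}{k^2-1}k^{j+1}$ are in fact equal, since $\frac{k}{k^2-1}k^j=\frac{k^{j+1}}{k^2-1}=\frac{1}{k^2-1}k^{j+1}$. Hence the first inequality $\theta_r\ge D^{-\frac{1}{k^2-1}k^{j+1}}$ coincides with the starting hypothesis on $\theta_p$ and reduces to $\theta_r\ge\theta_p$. This is immediate from the recursive definition: $\theta_{n+1}$ is either $\theta_n$ or $\frac{|b_n|}{|a_n|}\theta_n$ with $\frac{|b_n|}{|a_n|}>1$, so $\theta$ is non-decreasing.

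The second inequality must be read as $\tau_r\ge D^{-\frac{k}{k^2-1}k^{j+1}}$; as literally written, $\tau_p\ge D^{-\frac{k}{k^2-1}k^{j+1}}$ would contradict the starting hypothesis on $\tau_p$. My first step would be to verify that $\tau$ is also non-decreasing: in the non-trivial branch $\tau_{n+1}=\min\!\big(\tfrac{|a_n|}{|b_n|}\tau_n,\,\theta_{n+1}^k\big)$, the assumption $|a_n|\ge|b_n|$ forces $\theta_{n+1}=\theta_n$, and the previously established invariant $\tau_n\le\theta_n^k$ makes both terms of the minimum at least $\tau_n$.

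I would then split into two cases according to whether ``capping'' occurs at some step $n\in[p,r-1]$, meaning $\tau_{n+1}=\theta_{n+1}^k$. In the capping case the bound drops out immediately:
$$\tau_r\ge\tau_{n+1}=\theta_{n+1}^k\ge\theta_p^k\ge D^{-k\cdot\frac{k^{j+1}}{k^2-1}}=D^{-\frac{k}{k^2-1}k^{j+1}}.$$
In the no-capping case, the recursions simplify throughout $[p,r-1]$ to $\tau_{n+1}/\tau_n=\max(1,|a_n/b_n|)$ and $\theta_{n+1}/\theta_n=\max(1,|b_n/a_n|)$. Telescoping, together with the elementary identity $\log_{D^{-1}}\!\max(1,x)-\log_{D^{-1}}\!\max(1,1/x)=\log_{D^{-1}}\! x$, yields
$$\log_{D^{-1}}\!(\tau_r/\tau_p)-\log_{D^{-1}}\!(\theta_r/\theta_p)=\sigma_{r,p}.$$
By \eqref{eq:train2} and \eqref{eq:train3} with $j$ odd, $\sigma_{r,p}=\sigma_{q,p}+\sigma_{r,q}\ge k^j+0=k^j$, while monotonicity of $\theta$ gives $\log_{D^{-1}}\!(\theta_r/\theta_p)\ge 0$. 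Combining with the hypothesis $\tau_p\ge D^{-k^j/(k^2-1)}$ one finds $\tau_r\ge \tau_p D^{-k^j}\ge D^{-k^j(1+1/(k^2-1))}=D^{-k^{j+2}/(k^2-1)}$, which is the desired bound.

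The main obstacle is spotting the right case split. Once one sees that capping is either harmless (because it pins $\tau_{n+1}$ to $\theta_{n+1}^k\ge\theta_p^k$, which already matches the target) or does not occur at all (permitting a clean telescoping based on the cancellation $\max(1,x)/\max(1,1/x)=x$), the remainder is bookkeeping driven by \eqref{eq:train2}--\eqref{eq:train3}.
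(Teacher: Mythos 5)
Your proof is correct but follows a genuinely different route from the paper's. The paper's argument (with the obvious typos corrected: $\tau_p$ should read $\tau_r$ in both the statement and the displayed chain, and the exponent $\tfrac{k}{k^2-1}k^j$ in the display should be $\tfrac{1}{k^2-1}k^j$) relies on the earlier lemma $\tau_n^k \ge D^{-k\sigma_{n,p}}\theta_n$ on the engine $[p,q]$: from it one gets $\tau_q \ge D^{-\sigma_{q,p}}\theta_q^{1/k} \ge D^{-\sigma_{q,p}}\theta_p^{1/k}$, and then monotonicity of $\tau$ together with the train estimate $\sigma_{q,p}\ge k^j$ gives the bound on $\tau_r$. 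You instead dispense with that intermediate lemma and argue directly from the recursion, via a case split on whether the capping branch $\tau_{n+1}=\theta_{n+1}^k$ is ever taken on $[p,r-1]$. In the capping case the conclusion is immediate, since $\theta_{n+1}^k\ge\theta_p^k$ is already the target; in the no-capping case the two recursions telescope cleanly through the identity $\max(1,x)/\max(1,1/x)=x$, and $\sigma_{r,p}\ge k^j$ from \eqref{eq:train2}--\eqref{eq:train3} finishes it. Both arguments deliver the same estimate, but yours is more self-contained, works directly on all of $[p,r]$ rather than passing through the engine bound, and makes the role of the cap explicit, whereas the paper's preliminary lemma (whose proof is only asserted to ``follow easily by induction'') has to absorb the capping branch internally.

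Two minor remarks: the phrase ``would contradict the starting hypothesis'' is not quite right --- the starting hypothesis is a lower bound on $\tau_p$, so a stronger lower bound would not be contradictory, merely unjustified; and in the no-capping case you should note (as you implicitly do) that the $\theta$-recursion is $\theta_{n+1}/\theta_n=\max(1,|b_n/a_n|)$ unconditionally, while $\tau_{n+1}/\tau_n=\max(1,|a_n/b_n|)$ holds precisely because no cap fires, so the telescoping is legitimate throughout $[p,r-1]$.
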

\begin{proof}
The estimate on $\theta_r$ is immediate since $\theta_n$ does not decrease with $n$. The estimate on $\tau_p$ follows from
\begin{equation}
\tau_p \ge \tau_q \ge D^{-\sigma_{q, p}} \cdot D^{-\frac{k}{k^2 - 1} k^j}.
\end{equation}
\end{proof}

Our conclusion is the following:

\begin{thm}\label{thm:directing}
Let $(\tilde{f}_n)$ be the sequence defined by
\begin{equation}
\tilde{f}_n = l_{n+1} \circ f_n \circ l_n^{-1}.
\end{equation}
Then $(\tilde{f}_n)$ is a bounded sequence of automorphisms, and $\Omega_{(\tilde{f}_n)} \cong \Omega_{(f_n)}$. Moreover
$$
(-1)^{j+1} \log\left(\frac{|\tilde{a}_n|}{|\tilde{b}_n|}\right) \ge 0
$$
for all $n \in I_j$.
\end{thm}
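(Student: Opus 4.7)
The plan is to verify the three assertions of the theorem in increasing order of difficulty: the sign condition on $|\tilde{a}_n|/|\tilde{b}_n|$, uniform boundedness of $(\tilde{f}_n)$, and equivalence of basins. The first is essentially a book-keeping exercise built into the construction, the second is the technical heart of the proof, and the third follows once boundedness is in hand.

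For the sign condition I would unwind the recursive definition of $\theta_{n+1}$ and $\tau_{n+1}$ case by case. Writing $\tilde{a}_n/\tilde{b}_n = (\theta_{n+1}/\theta_n)(\tau_n/\tau_{n+1})(a_n/b_n)$, for $j$ odd one considers the two cases: if $|a_n|\ge|b_n|$ then $\theta_{n+1}/\theta_n = 1$ and $\tau_{n+1}\le(|a_n|/|b_n|)\tau_n$, giving $|\tilde{a}_n/\tilde{b}_n|\ge 1$; if $|b_n|>|a_n|$ then $\theta_{n+1}/\theta_n=|b_n|/|a_n|$ and $\tau_{n+1}/\tau_n=1$, giving $|\tilde{a}_n/\tilde{b}_n|=1$. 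The case $j$ even is symmetric.

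For uniform boundedness, the linear parts satisfy $|\tilde{a}_n|,|\tilde{b}_n|\in[C^2/D,D^2/C]$, since $\theta_{n+1}/\theta_n$ and $\tau_{n+1}/\tau_n$ each lie in a bounded multiplicative interval determined by the ratios of $|a_n|$ and $|b_n|$. The non-trivial point is controlling the higher-order terms. A monomial $z^{\alpha_1}w^{\alpha_2}$ of degree $k'\ge k$ in $f_n$ is rescaled in $\tilde{f}_n$ by a factor $\theta_{n+1}/(\theta_n^{\alpha_1}\tau_n^{\alpha_2})$ in the first coordinate and $\tau_{n+1}/(\theta_n^{\alpha_1}\tau_n^{\alpha_2})$ in the second. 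Using $\theta_n,\tau_n\ge 1$, the invariants $\theta_n\le\tau_n^k$ and $\tau_n\le\theta_n^k$ established by the preceding lemmas, and $k'\ge k$, one checks $\theta_n^{\alpha_1}\tau_n^{\alpha_2}\ge\max(\theta_n,\tau_n)$: when $\alpha_2\ge 1$ this is immediate, and when $\alpha_2=0$ it follows from $\theta_n^{k'}\ge\theta_n^k\ge\tau_n$; the other case is symmetric. Combined with $\theta_{n+1}\le(D/C)\theta_n$ and $\tau_{n+1}\le(D/C)\tau_n$, both rescaling factors are bounded by $D/C$, which together with the uniform bound on the Taylor coefficients of $f_n$ gives a uniform bound on $\tilde{f}_n$ on $\mathbb{B}$.

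For the basin equivalence I would pass through the abstract basin. Since each $l_n$ is a linear automorphism of $\mathbb{C}^2$, the assignment $(x_k,x_{k+1},\ldots)\mapsto(l_k(x_k),l_{k+1}(x_{k+1}),\ldots)$ gives a bijection between equivalence classes of sequences defining $\Omega_{(f_n)}$ and $\Omega_{(\tilde{f}_n)}$ as abstract basins; this bijection is a biholomorphism because each component $l_n$ is. Uniform boundedness of $(\tilde{f}_n)$, together with Lemma~\ref{lemma:abstract}, identifies these abstract basins with the corresponding non-autonomous basins. The main obstacle throughout is the higher-order estimate in the previous paragraph: it is the only part that genuinely needs the invariants $\theta_n\le\tau_n^k$ and $\tau_n\le\theta_n^k$ to hold simultaneously through both the engine and the remainder of every train, which was ensured by the lemmas preceding the theorem and ultimately by Proposition~\ref{prop:trains}.
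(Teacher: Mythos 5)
Your treatments of the sign condition and of boundedness are correct and in fact supply details the paper leaves implicit: the case analysis on $\theta_{n+1}/\theta_n$ and $\tau_{n+1}/\tau_n$ gives the monotonicity of $|\tilde a_n|/|\tilde b_n|$ on odd and even trains, and your observation that $\theta_n^{\alpha_1}\tau_n^{\alpha_2}\ge\max(\theta_n,\tau_n)$ for every multi-index of degree at least $k$ (using both invariants $\theta_n\le\tau_n^k$ and $\tau_n\le\theta_n^k$) is exactly what controls the rescaled higher-order Taylor coefficients. That part is sound.

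The gap is in the basin equivalence, and you have in fact misidentified where the real difficulty lies: you say the higher-order estimate in Step~2 is ``the main obstacle,'' but that step is a routine bookkeeping exercise once the invariants are in hand, whereas Step~3 is the part the paper has to argue for genuinely. Your abstract-basin route does not go through as stated, for two reasons. First, the purported bijection $(x_k,x_{k+1},\ldots)\mapsto(l_k(x_k),l_{k+1}(x_{k+1}),\ldots)$ between tail spaces is not automatically well defined: the entries $\theta_n,\tau_n$ of $l_n$ are $\ge 1$ and unbounded, so $l_n(x_n)$ need not lie in $\mathbb{B}$ and the image sequence need not represent an element of the abstract basin of $(\tilde f_n)$ at all. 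To repair this you must show $l_n(x_n)\to 0$, i.e.\ that $\max(\theta_n,\tau_n)$ grows strictly more slowly than the orbit of $(f_n)$ contracts — but that is precisely the growth-rate comparison the paper invokes (``the coefficients of the maps $l_n$ grow at a strictly lower rate than the rate at which orbits are contracted''), and it is exactly the technical content you have omitted. Second, you invoke Lemma~\ref{lemma:abstract} for $(\tilde f_n)$ to identify its abstract and non-autonomous basins, but that lemma requires the sequence to be \emph{uniformly attracting}, not merely uniformly bounded. The conjugated linear coefficients satisfy only $|\tilde a_n|,|\tilde b_n|\le D^2/C$, and under the hypothesis $D^{k+1}<C$ (with $k\ge 2$) the quantity $D^2/C$ can exceed $1$, so $(\tilde f_n)$ need not contract on $\mathbb{B}$ in a single step and Lemma~\ref{lemma:abstract} does not apply to it. The paper avoids both issues by giving a direct two-inclusion argument with the explicit biholomorphism $l_0$: the inclusion $\Omega_{(\tilde f_n)}\subseteq l_0(\Omega_{(f_n)})$ is free because $\|l_{n+1}(w)\|\ge\|w\|$, and the reverse inclusion is where the growth-rate comparison does the work. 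You should replace your Step~3 by that argument, and supply the growth-rate estimate on $\max(\theta_n,\tau_n)$ that it relies on.
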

\begin{proof}
The condition on the coefficients of the linear parts of the maps $\tilde{f}_n$ follows from the discussion earlier in this section. The fact that the sequence $(\tilde{f}_n)$ is bounded follows from the facts that the linear parts stay bounded, and that the higher order terms grow by at most a uniform constant.

To see that the two basins of attraction are biholomorphically equivalent (with biholomorphism $l_0$), note that $\tilde{f}_{n,0} = l_{n+1} \circ f_{n,0}\circ  l_0^{-1}$. Since the entries of the diagonal linear maps $l_{n+1}$ are always strictly greater than $1$, it follows that the basin of the sequence $(\tilde{f}_n)$ is contained in the $l_0$-image of the basin of the sequence $(f_n)$. The other direction follows from the fact that the coefficients of the maps $l_n$ grow at a strictly lower rate than the rate at which orbits are contracted to the origin by the sequence $(f_n)$.
\end{proof}

\medskip

\noindent{\bf Step 2:} Connecting the trains.

\medskip

The following is a direct consequence of Lemma \ref{lemma:dominant}:

\begin{thm}\label{thm:triangulization}
Let $(f_n)$ be a bounded sequence of automorphisms of $\mathbb{C}^2$ whose linear parts are of the form $(z,w) \mapsto (a_n z, b_n w)$ and whose order of contact is $k$. Suppose that $|a_n| \ge |b_n|$ for all $n = 0, 1, \ldots$. Then there exists bounded sequences $(h_n)$ and $(g_n)$ such that $g_n \circ h_n = h_{n+1} \circ f_n + O(k+1)$ for all $n$. Here, the maps $g_n$ can be chosen to be lower triangular maps, and the maps $h_n$ can be chosen to be of the form $(z,w) \mapsto \mathrm{Id} + h.o.t.$.
\end{thm}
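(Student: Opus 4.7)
The plan is to solve the conjugation equation by a one-step non-autonomous normal form reduction at the critical degree $k$. Since each $f_n$ has order of contact $k$, I would expand
\[
f_n(z,w) = L_n(z,w) + F_n(z,w),\qquad L_n(z,w) = (a_n z,\, b_n w),
\]
with $F_n = F_n^{(k)} + F_n^{(k+1)} + \cdots$, and take the ansatz $h_n = \mathrm{Id} + h_n^{(k)}$, $g_n = L_n + g_n^{(k)}$, where $h_n^{(k)}$ is an unknown degree-$k$ homogeneous polynomial map and $g_n^{(k)}(z,w)=(0,\eta_n z^k)$ is the only term allowed in degree $k$ by lower triangularity. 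Contributions of higher degree need not be matched and may be set to zero.

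A direct composition, using that $f_n - L_n = O(\|z\|^k)$ and $h_n^{(k)} = O(\|z\|^k)$, reduces the target identity $g_n\circ h_n = h_{n+1}\circ f_n + O(\|z\|^{k+1})$ to the homogeneous cohomological equation
\[
h_{n+1}^{(k)}\circ L_n - L_n\circ h_n^{(k)} = g_n^{(k)} - F_n^{(k)}.
\]
Writing $h_n^{(k)}$ monomially with coefficient $c_{n,i,j}^{(\ell)}$ for $z^i w^j$ in coordinate $\ell\in\{1,2\}$, this decouples over monomials into the scalar recursions
\[
a_n^i b_n^j\, c_{n+1,i,j}^{(\ell)} - \lambda_{\ell,n}\, c_{n,i,j}^{(\ell)} = \bigl(g_n^{(k)}\bigr)_{i,j}^{(\ell)} - \bigl(F_n^{(k)}\bigr)_{i,j}^{(\ell)},\qquad i+j=k,
\]
with $\lambda_{1,n}=a_n$ and $\lambda_{2,n}=b_n$.

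For the first coordinate, and for the second coordinate whenever $j\ge 1$, lower triangularity forces the $g$-term to vanish, and I would solve the recursion backward in $n$; this yields
\[
c_{n,i,j}^{(\ell)} = \sum_{m\ge n}\left(\prod_{l=n}^{m-1}\frac{a_l^i b_l^j}{\lambda_{\ell,l}}\right)\frac{(F_m^{(k)})_{i,j}^{(\ell)}}{\lambda_{\ell,m}}.
\]
The contraction factor $|a_l^i b_l^j/\lambda_{\ell,l}|$ simplifies to $|a_l^{i-1}b_l^j|$ or $|a_l^i b_l^{j-1}|$; since $k\ge 2$, at least one exponent in the numerator is $\ge 1$, giving a bound of $D^{k-1}<1$ in every case but one. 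The unique delicate case is the monomial $w^k$ in the first coordinate, where the ratio is $|b_l^k/a_l|$; here the hypothesis $|b_n|\le|a_n|$ is essential, yielding $|b_l^k/a_l|\le|a_l|^{k-1}\le D^{k-1}$. Combined with $|\lambda_{\ell,l}|\ge C$ and the uniform bound on the coefficients of $F_n^{(k)}$, the geometric series converges to a uniformly bounded solution. The lone resonant case $(i,j,\ell)=(k,0,2)$ --- the monomial $z^k$ in the $w$-coordinate --- cannot be handled this way and is instead absorbed into $g_n$ by setting $c_{n,k,0}^{(2)}=0$ and $\eta_n=-(F_n^{(k)})_{k,0}^{(2)}$, which is automatically bounded.

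The only place the hypothesis $|a_n|\ge|b_n|$ enters is the estimate of the critical ratio $|b_n^k/a_n|$. Without it, the cohomological equation for $w^k$ in the first coordinate would have no bounded solution in general, and this term would have to be absorbed into $g_n$, destroying lower triangularity. This is the sole real obstacle in the argument, and it is circumvented precisely by the assumed dominance.
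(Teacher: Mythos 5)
Your proposal is correct and follows essentially the same route as the paper: set up the degree-$k$ cohomological equation, solve it backward in $n$ monomial by monomial, use $|a_n|\ge|b_n|$ to control the single delicate ratio $|b_n^k/a_n|$, and absorb the resonant $z^k$ term in the second coordinate into $g_n$. The only difference is organizational — the paper first invokes (without proof) a preliminary change of coordinates that kills every degree-$k$ monomial except $w^k$ in the first coordinate and $z^k$ in the second, and then solves a single scalar affine recursion for the $w^k$ coefficient, whereas you run the contraction argument for all monomials simultaneously, which in effect supplies the proof of that preliminary reduction. (One small slip: from $0=\eta_n-(F_n^{(k)})_{k,0}^{(2)}$ you get $\eta_n=+(F_n^{(k)})_{k,0}^{(2)}$, not the negative; this doesn't affect boundedness.)
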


In fact, even without the condition $|a_n| \ge |b_n|$ we can always change coordinates such that the maps $f_n$ become of the form
\begin{equation}
f_n(z,w) = (a_n z + c_n w^k, b_n w + d_n z^k) + O(k+1).
\end{equation}
Hence we may assume that our maps are all of this form. We outline the proof of Theorem \ref{thm:triangulization}. Our goal is to find sequences $(g_n)$ and $(h_n)$ such that the following diagram commutes up to degree $k$.
\begin{equation}
\begin{CD}
\mathbb{C}^2 @>f_0>> \mathbb{C}^2 @>f_1>> \mathbb{C}^2 @>f_2>> \cdots\\
@VVh_0V @VVh_1V @VVh_2V\\
\mathbb{C}^2 @>g_0>> \mathbb{C}^2 @>g_1>> \mathbb{C}^2 @>g_2>> \cdots
\end{CD}
\end{equation}
Here, $h_n$ will be of the form
\begin{equation} \label{form:h_n}
h_n: (z,w) \mapsto (z + \alpha_n w^k, w),
\end{equation}
and $g_n$ will be of the form
\begin{equation}
g_n: (z,w) \mapsto (a_n z, b_n w + \gamma_n z^k).
\end{equation}
We need that
\begin{equation}
h_n = g_n^{-1} \circ h_{n+1} \circ f_n + O(k+1).
\end{equation}
It is clear that given the map $h_{n+1}$ we can always choose $g_n$ such that $h_n$ is of the form \eqref{form:h_n}, and that this map $g_n$ is unique. Hence we can view $\alpha_n$ as a function of $\alpha_{n+1}$. In fact, this function is affine and given by
\begin{equation}
\alpha_n = \frac{b_n^k}{a_n} \alpha_{n+1} + \frac{c_n}{a_n},
\end{equation}
or equivalently
\begin{equation}
\alpha_{n+1} = \frac{a_n}{b_n^k} \alpha_n + \frac{c_n}{b_n^k}.
\end{equation}
Note that both coefficients of these affine maps are uniformly bounded from above in norm, and that $|\frac{a_n}{b_n^k}| \ge \frac{1}{D}$. It follows that there exist a unique bounded orbit for this sequence of affine maps. In other words, we can choose a sequence $(h_n)$ whose $k$-th degree terms are uniformly bounded. It follows directly that the maps $g_n$ are also uniformly bounded, which completes the proof.

Moreover, we note that we have much more flexibility if we have a sequence of intervals $(I_j)$ (with $j$ odd) and we want to find sequences $(h_n)$ and $(g_n)$ on each $I_j$ that are uniformly bounded over all $j$. In fact for each interval $I_j$ we can start with any value for $\alpha_{r_j - 1}$ and inductively define the maps $h_n$ backwards. We merely need to choose uniformly bounded starting values $\alpha_{r_j - 1}$.

The entire story works just the same for the intervals $I_j$ with $j$ even. In this case we have that $|a_n| \le |b_n|$, so we work with maps $h_n$ and $g_n$ of the form
\begin{equation}
h_n: (z,w) \mapsto (z , w + \beta_n z^k),
\end{equation}
and
\begin{equation}
g_n: (z,w) \mapsto (a_n z + \delta_n w^k, b_n w).
\end{equation}
The only matter to which we should pay attention is what happens where the different intervals are connected. Hence let us look at the following part of the commutative diagram
\begin{equation}
\begin{CD}
\cdots @>f_{r_j - 2}>> \mathbb{C}^2 @>f_{r_j - 1}>> \mathbb{C}^2 @>f_{r_j}>> \mathbb{C}^2 @>f_{r_j + 1}>> \cdots\\
@VV{}V @VVh_{r_j - 1}V @VVh_{r_j}V @VVh_{r_j + 1}V\\
\cdots @>g_{r_j - 2}>> \mathbb{C}^2 @>g_{r_j - 1}>> \mathbb{C}^2 @>g_{r_j}>> \mathbb{C}^2 @>g_{r_j + 1}>> \cdots
\end{CD}
\end{equation}
We consider the case where $j$ is odd and $j+1$ even; the other is similar. Imagine that we have constructed the maps $h_n$ and $g_n$ on the interval $I_{j+1}$. Then $h_{r_j}$ is of the form
\begin{equation}
h_{r_j}: (z,w) \mapsto (z , w + \beta_{r_j} z^k).
\end{equation}
As before we can find $g_{r_j -1}$ such that the map $h_{r_j - 1}$ given by $g_{r_j -1}^{-1} \circ h_{r_j} \circ f_{r_j -1}$ is of the form
\begin{equation}
h_{r_j -1}: (z,w) \mapsto (z + \alpha_{r_j -1}w^k, w).
\end{equation}
We note that while the map $g_{r_j -1}$ does depend on the coefficient $\beta_{r_j}$, the coefficient $\alpha_{r_j -1}$ (and equivalently also the map $h_{r_j -1}$) only depends on the coefficients of $f_{r_j-1}$ and not on those of $h_{r_j}$. Hence we can find uniformly bounded maps $h_n$ on each of the intervals, and therefore also uniformly bounded maps $g_n$, which are lower triangular for $n$ in each interval $[p_j, r_j)$ with $j$ odd, and upper triangular when $j$ is even. This completes the proof of Theorem \ref{thm:main}.

\section{Proof in the general case}

In what follows we drop our assumption of diagonality of the linear parts of the maps $f_n$. Our approach will be different this time. We will not able to use linear maps $l_n$ as we did in the diagonal setting. The problem is that trying to keep bounded off-diagonal terms in the linear parts of the maps $\tilde{f}_n$ places strong restrictions on the maps $l_n$, even if we use lower-triangular matrices instead of diagonal matrices.

Our solution will be to change coordinates to lower triangular maps $g_n$ immediately, but this comes at severe cost. The coefficients of the maps $g_n$ and $h_n$ will grow exponentially with $n$. What saves the day is that we can find sequences $(g_n)$ and $(h_n)$ for which the coefficients are bounded \emph{at the start and end of each train}. This gives us estimates on the coefficients of all $(g_n)$ and $(h_n)$, which turn out to be sufficient under the additional assumption that
$$
D^{11/5} < C.
$$

Our plan for proving Theorem \ref{thm:general} is the following: We use the trains defined in Definition \ref{def:trains}, where we pick $k$ such that $k < \frac{11}{5}$ and $D^k <C$, and set $x = \frac{3}{2}$. Then we will analyze the conditions on the coefficients of the maps $(g_n)$ and $(h_n)$ that arise from the commutative diagram. We prove that it is possible to chose a specific sequence of maps for which the coefficients remain bounded at the start and end of each train. In the rest of the paper we assume that we have made such a choice of maps.

In the second step we find an estimate for the growth of the coefficients of $(g_n)$ and $(h_n)$ in the middle of trains, and show that the basin of the sequence $(g_n)$ is equal to $\mathbb C^2$. In the third step we use the usual construction of the maps $\Phi_n$. Most work goes into proving that these maps converge on compact subsets to a holomorphic map $\Phi$ from the basin of the sequence $(f_n)$ to the basin of the sequence $\Omega_{(g_n)}$. We end by showing that $\Phi$ is the required biholomorphism.

The choice of $x$ we made seems rather random, but it turns out
that the exact value of $x$ does not influence our result. The
bottleneck of our estimates on the quadratic coefficients of
$(g_n)$ and $(h_n)$, depends on estimates on the wagons and the
length of the trains. When $x$ increases, the estimates on the
wagons (Lemma \ref{lemma:trein}) get weaker while the trains get
longer. These effects cancel out.
\medskip

\noindent {\bf Step 1:} Exploring new coordinates

\medskip

Instead of first conjugating with linear maps $(l_n)$ as we did in the diagonal case, we will immediately conjugate (on each train $j$) with maps $h_n^j$ (for $n \in [p_j,p_{j+1}]$) of the form
$$
h_n^j(z,w) =(z,w) + ({^j}\alpha_n^{0,2} w^2 + {^j}\alpha_n^{1,1} zw + {^j}\alpha_n^{2,0} z^2, {^j}\beta_n^{0,2} w^2 + {^j}\beta_n^{1,1} zw + {^j}\beta_n^{2,0} z^2 )
$$
We will obtain a diagram of the form
$$
\begin{CD}
\cdots @> f_{p_j-2} >> \mathbb{C}^2 @> f_{p_j-1} >> \mathbb{C}^2 @> M_j >> \mathbb{C}^2 @> f_{p_j} >> \mathbb{C}^2 @> f_{p_j + 1} >> \mathbb C^2 @> f_{p_j + 2} >> \cdots\\
@VV V @VV h_{p_j-1}^{j-1} V @VV h_{p_j}^{j-1} V @VV h_{p_j}^j V @VV h_{p_j+1}^j V @VV h_{p_j+2}^j V\\
\cdots @> g_{p_j-2} >> \mathbb{C}^2 @> g_{p_j-1} >> \mathbb{C}^2 @> M_j >> \mathbb{C}^2 @> g_{p_j} >> \mathbb{C}^2 @> g_{p_j + 1} >> \mathbb C^2 @> g_{p_j + 2} >> \cdots\\
\end{CD}
$$
which will commute up to degree two. We write
$$
f_n(z,w) = (a_n z, b_n w + c_n z) + (\sum_{m=2}^{\infty} \sum_{i=0}^{m}d_n^{i,m-i} z^iw^{m-i},\sum_{m=2}^{\infty} \sum_{i=0}^{m}e_n^{i,m-i} z^iw^{m-i})
$$
as usual. The maps $g_n$ will be lower triangular of the form
$$
g_n(z,w) = (a_n z, b_n w + c_n z + d_n z^2).
$$
Fix $j$ for now, and drop the indices $j$ in $h_n^j$.

Note that $h_n(z) = g_n^{-1} \circ h_{n+1} \circ f_n (z)+
O(||z||^3)$. This gives us the following relations between the
quadratic coefficients of $h_n$ and $h_{n+1}$:
$$
\begin{aligned}
\alpha_n^{0,2} & = \frac{b_n^2}{a_n} \cdot \alpha_{n+1}^{0,2} + x_n^{0,2},\\
\alpha_n^{1,1} & = b_n \cdot \alpha_{n+1}^{1,1} + r_n^{1,1}(\alpha_{n+1}^{0,2}) + x_n^{1,1},\\
\alpha_n^{2,0} & = a_n \cdot \alpha_{n+1}^{2,0} + r_n^{2,0}(\alpha_{n+1}^{0,2}, \alpha_{n+1}^{1,1}) + x_n^{2,0},\\
\beta_n^{0,2} & = b_n \cdot \beta_{n+1}^{0,2} + s_n^{0,2}(\alpha_{n+1}^{0,2}) + y_n^{0,2},\\
\beta_n^{1,1} & = a_n \cdot \beta_{n+1}^{1,1} + s_n^{1,1}(\alpha_{n+1}^{0,2}, \alpha_{n+1}^{1,1}, \beta_{n+1}^{0,2}) + y_n^{1,1},\\
\beta_n^{2,0} & = \frac{a_n^2}{b_n} \cdot \beta_{n+1}^{2,0} + s_n^{2,0}(\alpha_{n+1}^{0,2}, \alpha_{n+1}^{1,1}, \alpha_{n+1}^{2,0}, \beta_{n+1}^{0,2}, \beta_{n+1}^{1,1}) + y_n^{2,0}-\frac{d_n}{b_n}.
\end{aligned}
$$
Here the $r_n$'s and $s_n$'s are linear functions with
coefficients bounded by $4\frac{D^2}{C^2}$, and the constants $x_n$ and
$y_n$ have the same bound. We will usually drop the variables in the functions $r_n$ and $s_n$.

Given $h_{p_{j+1}}$, we can now set
\begin{equation}\label{eq:d}
d_n=a_n^2 \cdot \beta_{n+1}^{2,0} + b_n s_n^{2,0}(\alpha_{n+1}^{0,2}, \alpha_{n+1}^{1,1}, \alpha_{n+1}^{2,0}, \beta_{n+1}^{0,2}, \beta_{n+1}^{1,1}) +b_n y_n^{2,0}
\end{equation}
for $p_j \leq n < p_{j+1}$. Then $\beta_n^{2,0}$ will be zero for these values of $n$.

The values of $\alpha_n^{0,2}$ up to $\beta_n^{1,1}$ can be studied using the relations above. We know that $|a_n|$ and $|b_n|$ are at most $D<1$. The value of $\left|\frac{b_n^2}{a_n}\right|$ is small on average, by the choice of our trains. Therefore, when we start with $h_{p_{j+1}}$ and work backwards, the quadratic constants $\alpha_n^{0,2}$ up to $\beta_n^{1,1}$ should not get too big. This is made more precise in the lemma below.

For that lemma, pick $0< \epsilon < \min\left\{\frac{11 - 5k}{4},\frac{5-2k}{3},\frac{3-k}{2}\right\}$, which is possible since we assumed that $k < 11/5$.

\begin{lemma}\label{lemma:afschatting}
For any $\delta>0$, there exists a constant $X=X(C,D,\epsilon,\delta)$ independent of $j$, such that if
$|\alpha_{p_{j+1}}^{0,2}|, \ldots, |\beta_{p_{j+1}}^{1,1}|$ are all bounded by $X$, then for all $n \in
[p_j,p_{j+1}]$ we have:
\begin{equation*}
|\alpha_{n}^{0,2}|, \ldots, |\beta_{n}^{1,1}| \leq Y X \max_{n \leq
s \leq t \leq p_{j+1}}\left\{\left|\frac{b_{t,s}^{2-\epsilon}
}{a_{t,s}}\right|\cdot |a_{s,n}|^{1-\epsilon}\right\}\cdot
\max\left\{\delta, D^{\epsilon(p_{j+1}-n)/2}\right\},
\end{equation*}
where $Y=Y(C,D,\epsilon)$ is a constant independent of both $j$ and $\delta$.
\end{lemma}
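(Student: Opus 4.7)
The plan is to iterate the six recurrence relations backward from $n=p_{j+1}$ and bound each resulting explicit sum term-by-term against the quantity $M(n):=\max_{n\le s\le t\le p_{j+1}}|b_{t,s}^{2-\epsilon}/a_{t,s}|\,|a_{s,n}|^{1-\epsilon}$. The coefficients are handled in order of dependence: first $\alpha^{0,2}$, then the pair $\alpha^{1,1},\beta^{0,2}$, then the pair $\alpha^{2,0},\beta^{1,1}$; the coefficient $\beta^{2,0}$ has been killed by the choice \eqref{eq:d}, so it requires no estimate.

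The case of $\alpha^{0,2}$ serves as the model. Iterating $\alpha_n^{0,2}=\tfrac{b_n^2}{a_n}\alpha_{n+1}^{0,2}+x_n^{0,2}$ gives
$$
\alpha_n^{0,2}=\frac{b_{p_{j+1},n}^2}{a_{p_{j+1},n}}\,\alpha_{p_{j+1}}^{0,2}+\sum_{s=n}^{p_{j+1}-1}\frac{b_{s,n}^2}{a_{s,n}}\,x_s^{0,2}.
$$
Each coefficient satisfies $|b_{s,n}^2/a_{s,n}|=|b_{s,n}^{2-\epsilon}/a_{s,n}|\cdot|b_{s,n}|^\epsilon\le M(n)\,D^{\epsilon(s-n)}$, obtained by taking $s'=n$, $t=s$ in the definition of $M(n)$. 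Summing the resulting geometric series in $D^\epsilon$ yields
$$
|\alpha_n^{0,2}|\le M(n)\left[X\,D^{\epsilon(p_{j+1}-n)}+\frac{4D^2/C^2}{1-D^\epsilon}\right].
$$
With $X$ chosen proportional to $1/\delta$, the two cases $D^{\epsilon(p_{j+1}-n)/2}\gtrless\delta$ are easy to analyze separately, and the bracket is bounded by $YX\,\max\{\delta,D^{\epsilon(p_{j+1}-n)/2}\}$ for some constant $Y=Y(C,D,\epsilon)$.

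For the remaining coefficients, iterating the recurrences produces, in addition to boundary and error terms as above, nested sums of the form $\sum_s b_{s,n}\,r_s(\alpha_{s+1}^{0,2})$ (or with $a_{s,n}$ in place of $b_{s,n}$, and similarly with other previously bounded coefficients). Substituting the inductive bound reduces each such term to products like $|b_{s,n}|\cdot M(s+1)$. The key algebraic observation is that the restriction of the maximum defining $M(n)$ to indices $s'\ge s+1$ factors as $|a_{s+1,n}|^{1-\epsilon}\,M(s+1)$; so it suffices to compare $|b_{s,n}|$ with $|a_{s+1,n}|^{1-\epsilon}$ up to an exponentially decaying factor in $s-n$ (and analogously with $a_{s,n}$). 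Precisely these comparisons, at the three levels of nesting corresponding to $\alpha^{0,2}$ alone, to the pair $\alpha^{1,1},\beta^{0,2}$, and to the pair $\alpha^{2,0},\beta^{1,1}$, are what force the three constraints $\epsilon<(3-k)/2$, $\epsilon<(5-2k)/3$ and $\epsilon<(11-5k)/4$ appearing in the preamble.

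The main obstacle is the $\beta^{1,1}$ iteration, which involves all previously bounded coefficients simultaneously and therefore a triple nested sum. Careful bookkeeping shows that each nesting contributes only one additional geometric-series factor and one loss of a power of $D^\epsilon$; and the factor $\max\{\delta,D^{\epsilon(p_{j+1}-n)/2}\}$ propagates through substitutions because the exponent of the $D^{\epsilon(\cdot)/2}$ term only grows as $s$ moves from $n$ toward $p_{j+1}$. Finally, each round of absorbing a constant error term into the buffer $YX\delta$ costs an enlargement of $X$ by a factor depending only on $C$, $D$, $\epsilon$; since only finitely many coefficients must be estimated, finitely many enlargements suffice and yield a single pair $(X(\delta),Y)$ with the dependencies claimed in the statement.
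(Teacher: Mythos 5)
Your overall plan — iterate each recurrence to an explicit sum, substitute the bounds on the previously treated coefficients, and bound term by term — is the same as the paper's (the paper runs it as a backwards induction, but that is only a cosmetic difference), and your treatment of $\alpha^{0,2}$ is correct. The gap is in the ``key algebraic observation.'' You propose to use the factorization of the final maximum $M(n)=\max_{n\le s\le t\le p_{j+1}}\{|b_{t,s}^{2-\epsilon}/a_{t,s}|\,|a_{s,n}|^{1-\epsilon}\}$, so that for the $\alpha^{1,1}$ step you would need $|b_{s,n}|$ to be dominated by $|a_{s+1,n}|^{1-\epsilon}$ up to an exponentially decaying factor in $s-n$. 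This comparison fails: each $|b_i|$ can be as large as $D$ while each $|a_i|$ can be as small as $C$, so
$$
\frac{|b_{s,n}|}{|a_{s+1,n}|^{1-\epsilon}} \;\le\; \frac{D^{\,s-n}}{C^{(1-\epsilon)(s+1-n)}} \;=\; C^{-(1-\epsilon)}\left(\frac{D}{C^{1-\epsilon}}\right)^{s-n},
$$
and since $C<D$ (and $\epsilon$ is small) the base $D/C^{1-\epsilon}$ is typically greater than $1$, so the quantity \emph{grows} geometrically in $s-n$ instead of decaying. Consequently your bound for $\alpha^{1,1}$ (and for the deeper nestings) does not close.

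The paper sidesteps this by not working with $M(n)$ directly in the induction. It proves, for each group of coefficients, a bound with its own tailor-made auxiliary maximum that matches the multiplier in that recurrence: for $\alpha^{0,2}$ it is $\max_{n\le t}\{|b_{t,n}^{2-\epsilon}/a_{t,n}|\}$; for $\alpha^{1,1},\beta^{0,2}$ it is $\max_{n\le s\le t}\{|b_{t,s}^{2-\epsilon}/a_{t,s}|\,|b_{s,n}|^{1-\epsilon}\}$ (note $|b_{s,n}|^{1-\epsilon}$, not $|a_{s,n}|^{1-\epsilon}$); for $\alpha^{2,0},\beta^{1,1}$ it is $\max_{n\le r\le s\le t}\{|b_{t,s}^{2-\epsilon}/a_{t,s}|\,|b_{s,r}|^{1-\epsilon}|a_{r,n}|^{1-\epsilon}\}$. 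With this matching, the decay needed at each stage is just $|b_{s,n}|/|b_{s,n}|^{1-\epsilon}=|b_{s,n}|^\epsilon\le D^{\epsilon(s-n)}$ (resp.\ $|a_{s,n}|^\epsilon$), which is harmless and does not involve comparing $b$'s with $a$'s. Only at the very end does the paper observe (and this is a genuine identity, not an inequality) that the triple max equals the double max $\max_{s,t}\{|b_{t,s}^{2-\epsilon}/a_{t,s}|\,|a_{s,n}|^{1-\epsilon}\}$ — which gives the form in the lemma statement and also dominates the single and double intermediate maxima. You need these intermediate maxima; starting from $M(n)$ throughout loses exactly the cancellation that makes the sums converge. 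A smaller point: the three constraints $\epsilon<(3-k)/2$, $\epsilon<(5-2k)/3$, $\epsilon<(11-5k)/4$ are not what make this lemma's sums converge (that only requires $D^\epsilon<1$); they are used in the later Lemmas \ref{lemma:maxpj+1pj} and \ref{lemma:maxnpj+1} and in Step~2.
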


\begin{proof}
We use backwards induction on $n$ to prove the following slightly stronger statements one by one:
\begin{align*}
|\alpha_n^{0,2}|&\leq X \max_{n \leq t \leq
p_{j+1}}\left\{\left|\frac{b_{t,n}^{2-\epsilon}}{a_{t,n}}\right|\right\}
\max\left\{\delta, D^{\epsilon(p_{j+1}-n)/2}\right\},\\
|\alpha_n^{1,1}|, |\beta_n^{0,2}|&\leq Y_1 X \max_{n \leq s \leq t \leq
p_{j+1}}\left\{\left|\frac{b_{t,s}^{2-\epsilon}}{a_{t,s}}\right|\cdot
|b_{s,n}|^{1-\epsilon}\right\}\max\left\{\delta,
D^{\epsilon(p_{j+1}-n)/2}\right\},\\
|\alpha_n^{2,0}|, |\beta_n^{1,1}| &\leq Y_2 X \max_{n \leq r \leq
s \leq t \leq p_{j+1}}\left\{\left|\frac{b_{t,s}^{2-\epsilon}
}{a_{t,s}}\right| |b_{s,r}|^{1-\epsilon} |a_{r,n}|^{1-\epsilon}\right\}
\max\left\{\delta, D^{\epsilon(p_{j+1}-n)/2}\right\},
\end{align*}
where we use the constants $X=\frac{4D^2}{C^2 \delta(1-D^{\epsilon/2})}$, $Y_1=\frac{4D^2}{C^2C^{1-\epsilon}D^{\epsilon/2}(1-D^{\epsilon/2})}+1$ and $Y_2=Y_1 \cdot \left(\frac{8D^2}{C^2C^{1-\epsilon}D^{\epsilon/2}(1-D^{\epsilon/2})}+1 \right)$.

The case $n=p_{j+1}$ is
stated in the assumptions, so we can start our backwards induction. Suppose that the first inequality holds
for $n+1$. Then we find that
\begin{align*}
|\alpha_{n}^{0,2}|&=\left|\frac{b_n^2}{a_n}\alpha_{n+1}^{0,2} + x_n^{0,2}\right|\\
& \leq \left|\frac{b_n^2}{a_n}\right| \cdot X \cdot \max_{n+1 \leq t
\leq
p_{j+1}}\left\{\left|\frac{b_{t,n+1}^{2-\epsilon}}{a_{t,n+1}}\right|\right\}\cdot \max\left\{\delta, D^{\epsilon(p_{j+1}-n-1)/2}\right\}+4\frac{D^2}{C^2}.
\end{align*}
Now use that $\left|\frac{b_n^2}{a_n}\right| = |b_n|^{\epsilon} \cdot \left|\frac{b_n^{2-\epsilon}}{a_n}\right|$ and $|b_n|^{\epsilon} \le D^{\epsilon/2} \cdot D^{\epsilon/2}$ to obtain
\begin{align*}
|\alpha_{n}^{0,2}| \leq X
\max_{n+1 \leq t
\leq
p_{j+1}}\left\{\left|\frac{b_{t,n}^{2-\epsilon}}{a_{t,n}}\right|\right\}\cdot
\max\left\{\delta, D^{\epsilon(p_{j+1}-n)/2}\right\},
\end{align*}
for $X$ as stated above. The other two cases are proved similarly and are left for the reader. For the upper bound on $|\alpha_n^{1,1}|$, $|\alpha_n^{2,0}|$, $|\beta_n^{0,2}|$ and $|\beta_n^{1,1}|$ we note that
$$
\max_{n \leq r \leq s \leq t \leq
p_{j+1}}\left\{\left|\frac{b_{t,s}^{2-\epsilon} }{a_{t,s}}\right|\cdot
|b_{s,r}|^{1-\epsilon}|a_{r,n}|^{1-\epsilon}\right\}=\max_{n \leq
s \leq t \leq p_{j+1}}\left\{\left|\frac{b_{t,s}^{2-\epsilon}
}{a_{t,s}}\right|\cdot |a_{s,n}|^{1-\epsilon}\right\}.
$$
\end{proof}

We obtain the following similar estimate on the coefficients of $g$.

\begin{corollary}\label{cor:d_en_beta}
If the assumptions of Lemma \ref{lemma:afschatting} are satisfied,
and $|\beta_{p_{j+1}}^{2,0}|\leq X$, then for all $n \in [p_j,
p_{j+1})$ we have that $|\beta_n^{2,0}|=0$, and
$$
|d_n|  \leq
\frac{4D^2}{C^2}(6YX+1)\cdot
\max_{n+1 \leq s \leq t \leq
p_{j+1}}\left\{\left|\frac{b_{t,s}^{2-\epsilon} }{a_{t,s}}\right|\cdot
|a_{s,n+1}|^{1-\epsilon}\right\}.
$$
\end{corollary}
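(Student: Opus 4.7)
The plan is to handle the two assertions separately; both are essentially direct consequences of work already set up.

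For the vanishing $\beta_n^{2,0} = 0$ on $[p_j, p_{j+1})$, I would substitute the defining formula (\ref{eq:d}) for $d_n$ directly into the recursion
$$\beta_n^{2,0} = \tfrac{a_n^2}{b_n}\beta_{n+1}^{2,0} + s_n^{2,0} + y_n^{2,0} - \tfrac{d_n}{b_n},$$
and observe that the substitution produces exact cancellation. A backwards induction on $n$, started at $n = p_{j+1}-1$ and using $|\beta_{p_{j+1}}^{2,0}| \le X$ only to ensure that $d_{p_{j+1}-1}$ itself is bounded, then propagates $\beta_n^{2,0} = 0$ throughout the train.

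For the bound on $|d_n|$, I would apply the triangle inequality to (\ref{eq:d}) and control three pieces. The term $|b_n||y_n^{2,0}|$ is at most $4D^3/C^2$ and produces the ``$+1$'' in the final constant. For the middle term $|b_n||s_n^{2,0}|$, I would use that $s_n^{2,0}$ is linear in the five coefficients $\alpha_{n+1}^{0,2},\alpha_{n+1}^{1,1},\alpha_{n+1}^{2,0},\beta_{n+1}^{0,2},\beta_{n+1}^{1,1}$ with coefficient norms at most $4D^2/C^2$; Lemma \ref{lemma:afschatting} applied at index $n+1$ bounds each of these by $Y X \cdot M_n$, where
$$M_n := \max_{n+1 \le s \le t \le p_{j+1}}\left|\frac{b_{t,s}^{2-\epsilon}}{a_{t,s}}\right|\cdot |a_{s,n+1}|^{1-\epsilon},$$
provided one selects $\delta \le 1$ so that the extra factor $\max\{\delta, D^{\epsilon(p_{j+1}-n-1)/2}\}$ appearing in the lemma is at most one. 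This accounts for five of the six $YX$ terms.

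The remaining piece is $|a_n|^2|\beta_{n+1}^{2,0}|$, which vanishes by the first part of the corollary except when $n = p_{j+1}-1$, where it is bounded by $D^2 X$. Using $Y \ge 1$ (immediate from the formulas for $Y_1$ and $Y_2$ in the proof of Lemma \ref{lemma:afschatting}) together with $M_n \ge 1$ (take $s = t = n+1$), this boundary contribution is absorbed into a sixth summand of the same shape, raising the coefficient from $5YX$ to $6YX$. No real obstacle arises beyond this constant bookkeeping; the one point meriting care is precisely the merging of the endpoint contribution into the main estimate, which is what forces the coefficient $6$ rather than $5$ in front of $YX$.
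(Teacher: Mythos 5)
Your proof is correct and follows the same route as the paper: the choice of $d_n$ in (\ref{eq:d}) produces exact cancellation in the recursion for $\beta_n^{2,0}$, and the bound on $|d_n|$ comes from the triangle inequality applied to (\ref{eq:d}) together with Lemma \ref{lemma:afschatting}. The only difference is cosmetic---the paper handles $n = p_{j+1}-1$ as a separate case (getting $\frac{4D^2}{C^2}(6YX+1)$ with no max factor) and the interior $n$ with coefficient $5YX+1$, whereas you fold the boundary contribution $|a_n|^2|\beta_{p_{j+1}}^{2,0}| \le D^2 X$ into a sixth $YXM_n$ summand via $Y \ge 1$ and $M_n \ge 1$; your explicit note that $\delta \le 1$ is needed to drop the lemma's extra factor is a precision the paper leaves implicit.
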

\begin{proof}
The first statement follows directly from the choice of $d_n$ in
(\ref{eq:d}). For the second statement, we have
\begin{align*}
|d_{p_{j+1}-1}|=|a_{p_{j+1}-1}^2 \cdot \beta_{{p_{j+1}}}^{2,0}+
b_{p_{j+1}-1} s_{p_{j+1}-1}^{2,0} +b_{p_{j+1}-1}
y_{p_{j+1}-1}^{2,0}|
\leq \frac{4D^2}{C^2}(6YX+1).
\end{align*}
For $n \in [p_j,p_{j+1}-1)$ we similarly have
\begin{align*}
|d_n| \leq \frac{4D^2}{C^2}(5YX+1)\max_{n+1 \leq s
\leq t \leq p_{j+1}}\left\{\left|\frac{b_{t,s}^{2-\epsilon}
}{a_{t,s}}\right|\cdot |a_{s,n+1}|^{1-\epsilon}\right\}.
\end{align*}
\end{proof}

In the following lemma we will give an upper bound for
$$
\max_{p_j \leq s
\leq t \leq p_{j+1}}\left\{\left|\frac{b_{t,s}^{2-\epsilon}
}{a_{t,s}}\right|\cdot |a_{s,n}|^{1-\epsilon}\right\},
$$
which plays an important role in the above estimates. This allows us to estimate the quadratic terms at the beginning of train $j$.

\begin{lemma}\label{lemma:maxpj+1pj}
We have
$$
\max_{p_j \leq s \leq t \leq
p_{j+1}}\left\{\left|\frac{b_{t,s}^{2-\epsilon} }{a_{t,s}}\right|\cdot
|a_{s,p_j}|^{1-\epsilon}\right\}\leq K_3,
$$
where $K_3=K_3(C,D,k,x)$ is a constant independent of $j$.
\end{lemma}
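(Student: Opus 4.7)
The plan is to bound the maximum by splitting the domain of $(s,t)$ into three subregions according to the positions of $s$ and $t$ relative to $q_j$: (A) engine--engine, $p_j \le s \le t \le q_j$; (B) mixed, $p_j \le s \le q_j \le t \le p_{j+1}$; and (C) wagon--wagon, $q_j \le s \le t \le p_{j+1}$. In each subregion we estimate the three factors $|b_{t,s}|^{2-\epsilon}$, $|a_{t,s}|^{-1}$, and $|a_{s,p_j}|^{1-\epsilon}$ using the appropriate train inequalities from Theorem~\ref{thm:trein} and Lemma~\ref{lemma:trein}, together with the uniform pointwise estimates $C \le |a_n|, |b_n| \le D$.

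In subregion (A), I would apply Theorem~\ref{thm:trein}(iii), $|b_{q_j, s}| \le K_2 |a_{q_j, s}|^x$, and factor $|b_{t,s}| = |b_{q_j, s}|/|b_{q_j, t}|$ using the trivial lower bound $|b_{q_j, t}| \ge C^{q_j - t}$ to obtain $|b_{t,s}| \le K_2 (D^x/C)^{q_j - t} |a_{t,s}|^x$. Raising to the $(2-\epsilon)$-th power and dividing by $|a_{t,s}|$ yields a factor $|a_{t,s}|^{x(2-\epsilon) - 1} \le 1$ (the exponent is positive for $x = 3/2$ and small $\epsilon$), reducing the bound in (A) to controlling $(D^x/C)^{(q_j - t)(2-\epsilon)}$, which is absorbed by the trivial estimate $|a_{s,p_j}|^{1-\epsilon} \le D^{(s-p_j)(1-\epsilon)}$. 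In subregion (C), Lemma~\ref{lemma:trein}(ii) gives $|b_{t,s}|^x \le D^{-2^{j+1}} |a_{t,s}|$, producing a blow-up factor of order $D^{-2^{j+1}(2-\epsilon)/x}$; here the saving $|a_{s,p_j}|^{1-\epsilon}$ combined with the engine-length estimate $s - p_j \ge q_j - p_j \ge 2^j/(k-x)$ from Lemma~\ref{lemma:trein}(iii) produces exponential contraction of order $D^{2^j(1-\epsilon)/(k-x)}$ that must absorb this blow-up. Subregion (B) is treated by writing $|b_{t,s}| = |b_{t, q_j}||b_{q_j, s}|$ and $|a_{t,s}| = |a_{t, q_j}||a_{q_j, s}|$ and applying the estimates from (A) and (C) to the engine and wagon pieces respectively.

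The main obstacle is subregion (C): one must verify that the contraction of order $D^{2^j(1-\epsilon)/(k-x)}$ dominates the blow-up $D^{-2^{j+1}(2-\epsilon)/x}$, so that the net exponent of $D$ is bounded below uniformly in $j$. This is where the specific bound $\epsilon < (11 - 5k)/4$ (together with the companion bounds $\epsilon < (5-2k)/3$ and $\epsilon < (3-k)/2$) enters: they are calibrated precisely so that, after all cancellations, the coefficient of $2^j$ in the net exponent is non-negative. The remaining non-$j$-dependent parts of the exponents are linear in $(t-s)$, $(s - p_j)$, and $(q_j - t)$ with coefficients bounded solely in terms of $(C, D, k, x)$, so they contribute only bounded factors. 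Taking the maximum of the bounds produced in (A), (B) and (C) then yields the desired constant $K_3 = K_3(C, D, k, x)$, independent of $j$.
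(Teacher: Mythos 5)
Your three-way decomposition is essentially the paper's (the paper merges your regions (B) and (C) into the single case $t\ge q_j$ via a product of two maxima), and you correctly identify that the engine length $q_j-p_j\ge 2^j/(k-x)$ must supply the contraction absorbing the $2^j$-dependent blow-up on the wagons. However, the estimates you propose in regions (A) and (C) are both too crude, and in each case the claimed absorption fails quantitatively. In region (C), using only Lemma \ref{lemma:trein}(ii) yields the blow-up $D^{-2^{j+1}(2-\epsilon)/x}$, whose exponent is $-\tfrac{4(2-\epsilon)}{3}2^j < -2.2\cdot 2^j$ in powers of $D$, while the contraction $|a_{s,p_j}|^{1-\epsilon}\le D^{(1-\epsilon)(q_j-p_j)}$ supplies an exponent of only $\tfrac{1-\epsilon}{k-x}2^j<2\cdot 2^j$ in the relevant range $k\in(2,11/5)$, $x=3/2$. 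The coefficient of $2^j$ in the net exponent is therefore strictly negative no matter how small $\epsilon$ is; the conditions $\epsilon<(11-5k)/4$ etc.\ cannot rescue this. The missing idea is the interpolation the paper performs on the wagons: one takes the \emph{minimum} of the trivial bound $|b_{t,s}^{2-\epsilon}/a_{t,s}|\le D^{-(k-2+\epsilon)(t-s)}$ (good for small $t-s$) and the bound $|b_{t,s}^{2-\epsilon}/a_{t,s}|\le D^{-2^{j+1}}D^{(t-s)(2-x-\epsilon)}$ from Lemma \ref{lemma:trein}(ii) (good for large $t-s$, since $2-x-\epsilon>0$). The two cross at $t-s=2^{j+1}/(k-x)$, capping the wagon blow-up at $D^{-2^{j+1}(k-2+\epsilon)/(k-x)}$, and it is then the inequality $2(k-2+\epsilon)\le 1-\epsilon$, i.e.\ $\epsilon\le(5-2k)/3$, that lets the engine contraction win.

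Region (A) has a separate, equally fatal problem. Since $C$ is only known to exceed $D^k$ and $x<k$, one may have $C<D^x$, so your factor $(D^x/C)^{(q_j-t)(2-\epsilon)}$ can be as large as $D^{-(k-x)(2-\epsilon)(q_j-t)}$. Its exponent involves $q_j-t$, whereas your compensating factor $D^{(s-p_j)(1-\epsilon)}$ involves the unrelated quantity $s-p_j$: taking $s=t$ near $p_j$ gives $q_j-t\approx q_j-p_j\ge 2^j/(k-x)$ and $s-p_j\approx 0$, so your chain of estimates produces a bound of order $D^{-(2-\epsilon)2^j}$ with nothing to absorb it. The reason is that you anchor the engine estimate at $q_j$ via Theorem \ref{thm:trein}(iii); the paper instead anchors at $p_j$ via Theorem \ref{thm:trein}(i), whose blow-up rate $\tfrac{k-x}{2x}$ \emph{per unit of} $t-p_j$ is matched step-for-step by the decay of the remaining factors in the identity
$$
\left|\frac{b_{t,s}^{2-\epsilon}}{a_{t,s}}\right|\,|a_{s,p_j}|^{1-\epsilon}=\left|\frac{b_{t,p_j}}{a_{t,p_j}^x}\right|^{1/2}|a_{t,p_j}|^{\frac{x-1}{2}}\left|\frac{b_{t,s}^{\frac32-\epsilon}}{a_{t,s}^{1/2}}\right|\left|\frac{a_{s,p_j}^{\frac32-\epsilon}}{b_{s,p_j}^{1/2}}\right|,
$$
using $\epsilon<(3-k)/2$ and $k<x^2$. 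Both repairs are needed before region (B), which you reduce to (A) and (C), can go through.
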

\begin{proof}
Suppose that $p_j \leq s \leq t \leq p_{j+1}$ and look at
$\left|\frac{b_{t,s}^{2-\epsilon}}{a_{t,s}}\right| \cdot
|a_{s,p_j}|^{1-\epsilon}$. We consider the cases $t \geq q_j$ and $t\leq q_j$ separately.

\medskip

\noindent {\bf Case I}: $t \geq q_j$.

\medskip

We have
\begin{align}\label{eq:CaseIi}
\left|\frac{b_{t,s}^{2-\epsilon}}{a_{t,s}}\right| \cdot
|a_{s,p_j}|^{1-\epsilon} \leq \max_{q_j \leq i \leq
t}\left\{\left|\frac{b_{t,i}^{2-\epsilon}}{a_{t,i}}\right|\right\} \cdot \max_{p_j
\leq i \leq
q_j}\left\{\left|\frac{b_{q_j,i}^{2-\epsilon}}{a_{q_j,i}}\right|\cdot
|a_{i,p_j}|^{1-\epsilon}\right\}.
\end{align}
Now notice that
\begin{equation}\label{eq:eerste}
\left|\frac{b_{t,i}^{2-\epsilon}}{a_{t,i}}\right| \leq
\frac{D^{(2-\epsilon)(t-i)}}{D^{k(t-i)}} \le D^{-
(k-2+\epsilon)(t-i)},
\end{equation}
and since $q_j \leq i \le t \le p_{j+1}$, Lemma
\ref{lemma:trein}(ii) gives us also
\begin{equation}\label{eq:tweede}
\left|\frac{b_{t,i}^{2-\epsilon}}{a_{t,i}}\right|=\left|\frac{b_{t,i}^x}{a_{t,i}}\right|
\cdot |b_{t,i}^{2-x-\epsilon}| \le  D^{(t-i)(2-x-\epsilon)} \cdot D^{-2^{j+1}}.
\end{equation}
Note that when $t-i$ increases, the first estimate will
increase while the second estimate will decrease. The estimates
are equal when $D^{-(k-x)(t-i)}=D^{-2^{j+1}}$, which is when
$t-i=\frac{2^{j+1}}{k-x}$. Using estimate \eqref{eq:eerste}
for $t-i \leq \frac{2^{j+1}}{k-x}$ and the estimate
\eqref{eq:tweede} when $t-i \geq \frac{2^{j+1}}{k-x}$, we get:
\begin{equation}\label{eq:CaseI}
\max_{q_j \leq i \leq
t}\left\{\left|\frac{b_{t,i}^{2-\epsilon}}{a_{t,i}}\right|\right\} \leq
D^{-2^{j+1}\cdot \frac{k-2+\epsilon}{k-x}}.
\end{equation}
To estimate the second term in the product in \eqref{eq:CaseIi},
use Theorem \ref{thm:trein} to see that
$\left|\frac{b_{q_j,i}}{a_{q_j,i}^x}\right|\leq K_2$, when $p_j
\leq i \leq q_j$. And by Lemma \ref{lemma:trein} we know that
$q_j-p_j \geq \frac{2^j}{k-x}$. Combining these facts gives
$$
\left|\frac{b_{q_j,i}^{2-\epsilon}}{a_{q_j,i}}\right|\cdot
|a_{i,p_j}|^{1-\epsilon} \leq \left|\frac{b_{q_j,i}}{a_{q_j,i}^x}\right| ^{1/x}\cdot
|b_{q_j,i}|^{1-\epsilon}|a_{i,p_j}|^{1-\epsilon} \leq K_x^{1/x} \cdot
D^{\frac{1-\epsilon}{k-x}\cdot 2^j}.
$$
Since $\epsilon<\frac{5-2k}{3}$ we obtain
\begin{equation}\label{eq:CaseIa}
\begin{aligned}
\left|\frac{b_{t,s}^{2-\epsilon}}{a_{t,s}}\right| \cdot
|a_{s,p_j}|^{1-\epsilon} & \leq \max_{q_j \leq i \leq
t}\left\{\left|\frac{b_{t,i}^{2-\epsilon}}{a_{t,i}}\right|\right\} \cdot \max_{p_j
\leq i \leq
q_j}\left\{\left|\frac{b_{q_j,i}^{2-\epsilon}}{a_{q_j,i}}\right|\cdot
|a_{i,p_j}|^{1-\epsilon}\right\}\\
& \leq D^{-2^{j+1}\cdot \frac{k-2+\epsilon}{k-x}} \cdot K_2^{1/x} \cdot D^{\frac{1-\epsilon}{k-x}\cdot 2^j} \leq K_2^{1/x}.
\end{aligned}
\end{equation}

\medskip

\noindent {\bf Case II}: $t \leq q_j$.

\medskip

By Theorem \ref{thm:trein} we have
$\left|\frac{b_{t,p_j}}{a_{t,p_j}^x}\right| \leq K_2 D^{-\frac{k-x}{x}(n-p_j)}$, and therefore
\begin{align*}
\left|\frac{b_{t,s}^{2-\epsilon}}{a_{t,s}}\right| \cdot
|a_{s,p_j}|^{1-\epsilon} &=\left| \frac{b_{t,p_j}}{a_{t,p_j}^x}\right|^{\frac{1}{2}}\cdot |a_{t,p_j}|^{\frac{x-1}{2}} \cdot \left|\frac{b_{t,s}^{\frac{3}{2}-\epsilon}}{a_{t,s}^{\frac{1}{2}}}\right| \cdot \left|\frac{a_{s,p_j}^{\frac{3}{2}-\epsilon}}{b_{s,p_j}^{\frac{1}{2}}}\right|\\
&\leq K_2^{\frac{1}{2}} \cdot D^{-\frac{k-x}{2x}(n-p_j)} \cdot D^{\frac{x-1}{2}(n-p_j)} \cdot \frac{D^{(\frac{3}{2}-\epsilon)(n-p_j)}}{D^{(k/2)(n-p_j)}}\\
& \leq K_2^{\frac{1}{2}},
\end{align*}
where we used that $k<x^2$ and $\epsilon< \frac{3-k}{2}$. Setting $K_3=K_2^{1/x}$ now gives us the desired result.
\end{proof}

Suppose we are now given $h_{p_j}^j$ and set $h_{p_j}^{j-1} =
M_j^{-1} \circ h_{p_j}^j \circ M_j$. Recall that the $M_j$ are
unitary matrices arising from the definition of the trains. A
consequence of the fact that each map has at most $6$ quadratic
terms is the following.

\begin{lemma}\label{lemma:overgang}
If the coefficients of $h_{p_j}^j$ are bounded by $R>0$, then the coefficients of $h_{p_j}^{j-1}$ are bounded by $6R$.
\end{lemma}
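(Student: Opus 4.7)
The map $h_{p_j}^j$ has the form $\mathrm{Id} + Q$, where $Q=(Q_1,Q_2)$ is a $\mathbb{C}^2$-valued purely quadratic polynomial with six coefficients each bounded by $R$. Since $M_j$ is linear, the conjugation produces
$$
h_{p_j}^{j-1} = M_j^{-1}\circ(\mathrm{Id}+Q)\circ M_j = \mathrm{Id} + M_j^{-1}\circ Q\circ M_j.
$$
Hence the problem reduces to bounding, coefficient by coefficient, the six quadratic coefficients of $\widetilde{Q} := M_j^{-1}\circ Q\circ M_j$ in terms of those of $Q$ and the entries of $M_j$.

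The first step is to analyse the inner substitution $Q\circ M_j$. Writing $M_j = \left(\begin{smallmatrix} m_{11}&m_{12}\\ m_{21}&m_{22}\end{smallmatrix}\right)$ and $M_j(u,v) = (m_{11}u+m_{12}v,\;m_{21}u+m_{22}v)$, one expands each of the three original monomials $z^2$, $zw$, $w^2$ as a quadratic in $(u,v)$. The resulting coefficients are monomials of degree $2$ in the $m_{i\ell}$. The diagonal contributions (e.g.\ $m_{11}^2$ for the $u^2$-coefficient of $z^2$) are trivially bounded by $1$ since the rows of a unitary are unit vectors. The cross-term contributions (e.g.\ $2m_{11}m_{12}$ for the $uv$-coefficient of $z^2$, or $m_{11}m_{22}+m_{12}m_{21}$ for the $uv$-coefficient of $zw$) are controlled via the AM--GM inequality $2|m_{11}m_{12}|\le|m_{11}|^2+|m_{12}|^2\le 1$ together with the orthogonality of rows of $M_j$. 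This shows every coefficient in each of the three expansions is bounded by $1$, so each quadratic coefficient of $Q_k\circ M_j$ is a sum of at most three terms, each bounded by $1\cdot R$, hence bounded by $3R$.

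The second step is to apply the outer factor $M_j^{-1}$. Since $M_j^{-1}=M_j^{*}$ is itself unitary, its entries again satisfy $|(M_j^{-1})_{k\ell}|\le 1$. Premultiplication by $M_j^{-1}$ replaces each component of $Q\circ M_j$ by a linear combination of the two components with coefficients of modulus at most $1$; each quadratic coefficient of $\widetilde{Q}$ is therefore bounded by $2\cdot 3R = 6R$, which is the desired estimate.

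There is no genuine obstacle here: the statement is a purely algebraic claim about how unitary changes of coordinates distort quadratic coefficients. The only delicate point is the uniform bound of $1$ on the cross-term coefficients in the expansion of $z^2, zw, w^2$, for which unitarity (and the AM--GM step it enables) is essential; without it one would a priori only get a factor of $2$ for the mixed terms, leading to a worse constant. The constants $3$ (for the number of quadratic monomials in two variables) and $2$ (for the two components of $\mathbb{C}^2$) combine to yield precisely the factor $6$ asserted in the lemma.
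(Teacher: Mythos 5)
Your proof is correct, and it is more detailed than the paper's but does not differ in essence. The paper's argument is a two-line norm estimate: from the coefficient bound one passes to $\|h_{p_j}^j(z,w)-(z,w)\|\leq 6R\|(z,w)\|^2$ (treating the six quadratic coefficients as six contributions each of size at most $R\|(z,w)\|^2$), then uses $\|M_j(z,w)\|=\|(z,w)\|$ to transfer this bound to $h_{p_j}^{j-1}$, and reads off the coefficient bound from the resulting norm bound. You instead track the six coefficients directly through the substitution, which makes the bookkeeping transparent: the expansion of each quadratic monomial in the rotated variables has coefficients bounded by $1$ (via the unit-length rows/columns and the AM--GM step), yielding a factor of $3$ from the inner substitution, and the post-composition by $M_j^{-1}$ contributes at most a further factor of $2$. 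One small slip: the bound $2|m_{11}m_{12}|\le|m_{11}|^2+|m_{12}|^2\le 1$ uses that a row of a unitary matrix is a \emph{unit vector}, not orthogonality of distinct rows; similarly the bound $|m_{11}m_{22}|+|m_{12}m_{21}|\le 1$ comes from the two columns each having unit norm. With that correction, every step is sound, and the final constant $6R$ is obtained exactly as claimed. Either version suffices for the application, since all that matters in Corollary~\ref{cor:pj+1pj} is that the constant be uniform in $j$, which is guaranteed by the unitarity of the matrices $M_j$.
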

\begin{proof}
We know that $\|h_{p_j}^j(z,w)-(z,w)\|\leq R\cdot
\|(z,w)\|^2$. Since $M_j$ is unitary we obtain
\begin{align*}
\|h_{p_j}^{j-1}(z,w)-(z,w)\|&=\|\left(h_{p_j}^j-\textrm{id}\right)(M_j(z,w))\|\\
&\leq 6R \cdot \|M_j(z,w)\|^2\\
&= 6R \cdot \|(z,w)\|^2
\end{align*}
The lemma follows.\end{proof}

The following is now an immediate consequence of Lemmas \ref{lemma:afschatting}, \ref{lemma:maxpj+1pj} and \ref{lemma:overgang}.

\begin{corollary}\label{cor:pj+1pj}
If $\delta>0$ is chosen sufficiently small, and $j_0$ is
sufficiently large, then for all $j \ge j_0$ we have that if the
coefficients of $h_{p_{j+1}}^j$ are bounded by $X$, then the
coefficients of $h_{p_j}^{j-1}$ are bounded by $X$.
\end{corollary}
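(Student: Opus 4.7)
The plan is to specialize Lemma \ref{lemma:afschatting} to the left endpoint $n = p_j$, simplify the resulting bound using Lemmas \ref{lemma:maxpj+1pj} and \ref{lemma:trein}(iii) together with Corollary \ref{cor:d_en_beta}, and then transfer from the coordinates of train $j$ to those of train $j-1$ via Lemma \ref{lemma:overgang}. Concretely, assuming the coefficients of $h_{p_{j+1}}^j$ are bounded by $X$, Lemma \ref{lemma:afschatting} at $n = p_j$ bounds the first five quadratic coefficients of $h_{p_j}^j$ by
$$
Y X \cdot \max_{p_j \le s \le t \le p_{j+1}} \left\{ \left|\frac{b_{t,s}^{2-\epsilon}}{a_{t,s}}\right| \cdot |a_{s,p_j}|^{1-\epsilon} \right\} \cdot \max\bigl\{\delta,\; D^{\epsilon (p_{j+1}-p_j)/2}\bigr\},
$$
while Corollary \ref{cor:d_en_beta} shows that the remaining coefficient $\beta_{p_j}^{j,2,0}$ actually vanishes.

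Next, I will replace the middle factor by the universal constant $K_3$ supplied by Lemma \ref{lemma:maxpj+1pj}, reducing the estimate to $Y K_3 X \cdot \max\{\delta, D^{\epsilon(p_{j+1}-p_j)/2}\}$. To close the argument I need this quantity to be at most $X/6$, so that after one application of Lemma \ref{lemma:overgang} (which incurs a factor of at most $6$) the coefficients of $h_{p_j}^{j-1}$ are bounded by $X$. Thus it suffices to arrange $6 Y K_3 \cdot \max\{\delta,\, D^{\epsilon(p_{j+1}-p_j)/2}\} \le 1$.

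The choice of constants proceeds in a definite order. Since $6 Y K_3$ depends only on $C$, $D$, $\epsilon$, $k$, $x$, one first picks $\delta > 0$ small enough that $6 Y K_3 \delta \le 1$; this determines $X = X(C,D,\epsilon,\delta)$ via Lemma \ref{lemma:afschatting}. To handle the exponential term, invoke Lemma \ref{lemma:trein}(iii), which gives $p_{j+1} - p_j \ge q_j - p_j \ge 2^j/(k-x)$. Choosing $j_0$ large enough that $D^{\epsilon \cdot 2^{j_0}/(2(k-x))} \le \delta$ guarantees that for all $j \ge j_0$ the maximum collapses to $\delta$, and the desired inequality follows.

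The argument is essentially bookkeeping, and there is no serious obstacle; the one point that warrants care is the order of quantifiers. The constant $X$ itself depends on $\delta$, but the constants $Y$ and $K_3$ do not, so $\delta$ must be fixed first and $X$ absorbed only afterwards. The exponential decay of the $D^{\epsilon(p_{j+1}-p_j)/2}$ term is what forces the restriction to $j \ge j_0$, and it is precisely the exponential growth of the engine lengths $q_j - p_j$ from Lemma \ref{lemma:trein}(iii) that makes such a $j_0$ exist.
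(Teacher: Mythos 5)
Your proposal is correct and matches the paper's intended (unwritten) argument: the paper merely asserts that the corollary is ``an immediate consequence of Lemmas~\ref{lemma:afschatting}, \ref{lemma:maxpj+1pj} and \ref{lemma:overgang},'' and your write-up fills in exactly that chain — Lemma~\ref{lemma:afschatting} at $n=p_j$, replacement of the max factor by $K_3$ via Lemma~\ref{lemma:maxpj+1pj}, the vanishing of $\beta_{p_j}^{2,0}$ from the choice of $d_n$ (Corollary~\ref{cor:d_en_beta}), the factor-of-$6$ transfer via Lemma~\ref{lemma:overgang}, and the correct quantifier order (fix $\delta$ first using the $\delta$-independence of $Y$ and $K_3$, then pick $j_0$ using the engine-length bound of Lemma~\ref{lemma:trein}(iii)). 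No gaps.
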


With this lemma, we now have all ingredients to actually define
the sequence $(h_n)$. Let $S\subset \mathbb C^6$ be the compact
set of all second degree polynomials of the form $h = Id + O(2)$
with second degree coefficients bounded by $X$. By Lemmas
\ref{lemma:afschatting}, \ref{lemma:overgang} and Corollary
\ref{cor:pj+1pj}, a choice of $h_j = h_{p_{j+1}}^{j} \in S$
uniquely determines a map $\psi_{j-1}(h_j) := h_{p_{j}}^{j-1}$.
For every $i$ we define
$$
V_i = \bigcap_{j\ge i} \psi_i \circ \ldots \psi_j(S).
$$
Each $V_i$ is a decreasing intersection of non-empty compact sets, and hence non-empty and compact. By continuity of each $\psi_i$ it follows that
$$
V_i = \psi_i(V_{i-1}).
$$
Therefore there exist an inverse orbit of the sequence $\psi_0, \psi_1, \ldots$. That is, a sequence $h_0, h_1, \ldots$ in $S$ with $\psi_i(h_{i+1}) = h_i$. Thus we have proved the following.

\begin{thm}
We can find sequences $(h_n)$ and $(g_n)$ whose coefficients are bounded by $X$ at the start and end of every train $j$, where $j \geq j_0$.
\end{thm}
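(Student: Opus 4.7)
The plan is to cast this as an inverse-limit problem on the compact space $S \subset \mathbb{C}^6$ of quadratic polynomial perturbations $h = \mathrm{Id} + O(\|z\|^2)$ whose six quadratic coefficients are all bounded by $X$. For each $j \geq j_0 + 1$, Corollary \ref{cor:pj+1pj} supplies a self-map $\psi_{j-1}: S \to S$ defined by $\psi_{j-1}(h_j) := h_{p_j}^{j-1}$, where an input $h_j \in S$ is interpreted as terminal data $h_{p_{j+1}}^{j}$ for train $j$. Concretely, $\psi_{j-1}$ first runs the backward recursion for the coefficients $\alpha_n^{\bullet}, \beta_n^{\bullet}$ across train $j$ to produce $h_{p_j}^{j}$, and then conjugates by the unitary matrix $M_j$. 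Both steps depend continuously on the input coefficients, so $\psi_{j-1}$ is continuous, and the Corollary guarantees $\psi_{j-1}(S) \subseteq S$.

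The theorem will then follow once we produce an inverse orbit of the sequence $(\psi_{j-1})_{j \geq j_0+1}$, i.e.\ a sequence $(h_j)_{j \geq j_0} \subset S$ with $\psi_{j-1}(h_j) = h_{j-1}$ for every $j$. To this end, set
$$
V_i \;:=\; \bigcap_{j \ge i} (\psi_i \circ \psi_{i+1} \circ \cdots \circ \psi_j)(S).
$$
Each $V_i$ is a decreasing intersection of non-empty compact subsets of $S$, hence itself non-empty and compact. A short continuity argument shows that $\psi_i(V_{i+1}) = V_i$: the inclusion $\subseteq$ is immediate from the definitions, and conversely, any $g \in V_i$ admits, for each $j \ge i+1$, a preimage $h^{(j)} \in (\psi_{i+1} \circ \cdots \circ \psi_j)(S)$ under $\psi_i$; a subsequential limit $h \in V_{i+1}$ of $(h^{(j)})$ then satisfies $\psi_i(h) = g$ by continuity.

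From $\psi_i(V_{i+1}) = V_i$, an inverse orbit is built inductively: pick any $h_{j_0} \in V_{j_0}$, and for each successive $j$ choose $h_{j+1} \in V_{j+1}$ with $\psi_j(h_{j+1}) = h_j$. Given these terminal data, Lemma \ref{lemma:afschatting} together with Lemma \ref{lemma:maxpj+1pj} bounds all intermediate coefficients of $h_n^j$ on each train by $X$, and the associated maps $(g_n)$ are read off from Equation \eqref{eq:d}. The main obstacle has in fact already been overcome in Corollary \ref{cor:pj+1pj} — namely, the translation of the delicate train estimates into the statement that $\psi_{j-1}$ is a self-map of the compact set $S$. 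Once this is in place, the remaining inverse-limit compactness argument is standard and presents no further technical difficulty.
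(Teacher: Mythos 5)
Your proposal is correct and follows essentially the same route as the paper: casting the problem as finding an inverse orbit of the self-maps $\psi_{j-1}: S \to S$ on the compact set $S$ of quadratic perturbations with coefficients bounded by $X$, using the nested intersections $V_i$ and continuity to guarantee non-emptiness. You supply slightly more detail than the paper on the continuity argument for $\psi_i(V_{i+1}) = V_i$ (the paper in fact has a small index typo there, writing $V_{i-1}$), but the underlying mechanism — Corollary \ref{cor:pj+1pj} to get the self-map property, then a compactness/inverse-limit argument — is identical.
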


\medskip

\noindent{\bf Step 2:} Properties of these new coordinates.

\medskip

What remains to show is that with the construction from the previous step gives a basin $\Omega_{(g_n)}$ which is on the one hand equal to all of $\mathbb C^2$, and on the other hand equivalent to the basin $\Omega_{(f_n)}$. In order to draw these conclusions we will need finer estimates on the size of the coefficients of the maps $h_n$ and $g_n$.

\begin{lemma}\label{lemma:maxnpj+1}
For $n \in [p_j,p_{j+1})$ we have
$$
\max_{n \leq s \leq t \leq
p_{j+1}}\left\{\left|\frac{b_{t,s}^{2-\epsilon} }{a_{t,s}}\right|\cdot
|a_{s,n}|^{1-\epsilon}\right\} \leq K_3 \cdot
D^{-2n(k-2+\epsilon)},
$$
where $K_3=K_3(C,D,k,x)$ as in Lemma \ref{lemma:maxpj+1pj}.
\end{lemma}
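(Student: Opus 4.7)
The plan is to imitate the structure of Lemma \ref{lemma:maxpj+1pj}, with the added complication that the base index $n$ no longer coincides with $p_j$ but may lie anywhere in the train $[p_j,p_{j+1})$. I will split the analysis according to where $n$, $s$, and $t$ fall relative to the engine/wagon boundary $q_j$, and for each case combine the trivial bounds $C \leq |a_i|,|b_i| \leq D$ with the sharper wagon estimate in Lemma \ref{lemma:trein}(ii) and the engine estimates in Theorem \ref{thm:trein}.

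First I would handle the purely-wagon case $n \geq q_j$, where $q_j \leq n \leq s \leq t \leq p_{j+1}$. Here one can copy the Case~I argument from Lemma \ref{lemma:maxpj+1pj} verbatim: use the two competing bounds $|b_{t,s}^{2-\epsilon}/a_{t,s}| \leq D^{-(k-2+\epsilon)(t-s)}$ and $|b_{t,s}^{2-\epsilon}/a_{t,s}| \leq D^{(2-x-\epsilon)(t-s)} \cdot D^{-2^{j+1}}$ (from Lemma \ref{lemma:trein}(ii)), balance them at $t-s = 2^{j+1}/(k-x)$, and then absorb $|a_{s,n}|^{1-\epsilon} \leq D^{(1-\epsilon)(s-n)}$. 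Because $\epsilon < (5-2k)/3$, as in the Case~I computation of Lemma \ref{lemma:maxpj+1pj}, this produces a uniformly bounded quantity, absorbed into $K_3$.

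Next, suppose $n$ lies in the engine, $p_j \leq n < q_j$. If $t \leq q_j$ too, then all three of $n,s,t$ are in the engine and I apply Theorem \ref{thm:trein}(ii) to bound $|b_{t,s}/a_{t,s}^x|$, exactly as in Case~II of Lemma \ref{lemma:maxpj+1pj}; the missing factor $|b_{t,s}|^{2-x-\epsilon}\cdot |a_{s,n}|^{1-\epsilon}$ is dominated by a pure geometric decay in the indices. If instead $t > q_j$, I split the product at $q_j$:
$$
\frac{b_{t,s}^{2-\epsilon}}{a_{t,s}}\,|a_{s,n}|^{1-\epsilon}
= \frac{b_{t,q_j}^{2-\epsilon}}{a_{t,q_j}}\cdot \frac{b_{q_j,s}^{2-\epsilon}}{a_{q_j,s}}\,|a_{s,n}|^{1-\epsilon}
$$
(assuming $s \leq q_j$; if $s > q_j$ the second factor is absent). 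The first factor is estimated by the wagon argument above (yielding a constant), while the second factor is handled by the engine estimate from Theorem \ref{thm:trein}, exactly as in Case~II of Lemma \ref{lemma:maxpj+1pj}. Since $n \leq s$, the factor $|a_{s,n}|^{1-\epsilon} \leq D^{(1-\epsilon)(s-n)}$ contributes only geometric decay.

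The main obstacle is bookkeeping the factor $D^{-2n(k-2+\epsilon)}$: the crude bound $|b_{t,s}^{2-\epsilon}/a_{t,s}| \leq D^{-(k-2+\epsilon)(t-s)}$ is largest when $t-s$ is largest, and since $t - s \leq p_{j+1} - n$ one gets a loss of at most $D^{-(k-2+\epsilon)(p_{j+1}-n)}$. Using that the trains grow geometrically (Lemma \ref{lemma:trein}(iii) together with $|I_j| \leq \log(D)/\log(D) \cdot (q_j-p_j)$-type bounds), one has $p_{j+1}-n \leq p_{j+1} \leq 2n$ for $n$ large enough, which yields the stated $D^{-2n(k-2+\epsilon)}$. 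The remaining factors in each case are shown to contribute at most a constant by the same balancing-of-exponents argument as in Lemma \ref{lemma:maxpj+1pj}, so folding the constants into $K_3$ completes the proof.
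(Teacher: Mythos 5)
Your overall decomposition (according to whether $n$, $s$, $t$ fall in the engine or the wagons) does cover the same cases as the paper's (which first drops the factor $|a_{s,n}|^{1-\epsilon}\leq 1$ and then splits only on where $s$ and $t$ fall relative to $q_j$). However, there are two concrete errors that make the proposal fail at the last step.

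First, your wagon case ($n\geq q_j$). You claim that absorbing $|a_{s,n}|^{1-\epsilon}\leq D^{(1-\epsilon)(s-n)}$ with the hypothesis $\epsilon<(5-2k)/3$ "produces a uniformly bounded quantity, absorbed into $K_3$." This imports the wrong argument from Case I of Lemma \ref{lemma:maxpj+1pj}: there, $n=p_j$ and the helpful decay came from $|a_{s,p_j}|\leq D^{q_j-p_j}\leq D^{2^j/(k-x)}$ because the max over $s$ forced $s$ to sweep through the entire engine. Here $s-n$ can be zero (take $s=n\geq q_j$), so $|a_{s,n}|^{1-\epsilon}$ contributes nothing. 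After balancing the two competing bounds at $t-s=2^{j+1}/(k-x)$, you are left with $D^{-2^{j+1}(k-2+\epsilon)/(k-x)}$, which grows without bound in $j$; it is not a constant. The lemma's conclusion allows precisely such growth, packaged as $D^{-2n(k-2+\epsilon)}$, so a uniform constant is neither achievable nor needed.

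Second, your derivation of the factor $D^{-2n(k-2+\epsilon)}$ is incorrect. You propose bounding the crude estimate at the worst case $t-s\leq p_{j+1}-n$ and then using $p_{j+1}-n\leq p_{j+1}\leq 2n$. The inequality $p_{j+1}\leq 2n$ is false in general: take $n=p_j$ and observe that $p_{j+1}-p_j=|I_j|$ is at least $2^j/(k-x)$ (and can be much larger), which is unbounded relative to $p_j$. Moreover, evaluating the crude bound at $t-s=p_{j+1}-n$ gives far too large an exponent; the entire point of the balance-of-exponents argument is that the crude bound is used only for $t-s$ up to the crossover value $2^{j+1}/(k-x)$, beyond which the other (decreasing) bound takes over. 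The paper's route, which you need here, is: all three cases yield the uniform bound $K_3\, D^{-2^{j+1}(k-2+\epsilon)/(k-x)}$; then combine this with the cumulative engine-length estimate $n\geq p_j\geq q_0+\sum_{i<j}(q_i-p_i)\geq 2^j/(k-x)$ (from Lemma \ref{lemma:trein}(iii) and the choice of $q_0$) to get $2^{j+1}/(k-x)\leq 2n$, hence $D^{-2^{j+1}(k-2+\epsilon)/(k-x)}\leq D^{-2n(k-2+\epsilon)}$. Finally, your description of the all-engine case is too vague: Case~II of Lemma \ref{lemma:maxpj+1pj} uses Theorem \ref{thm:trein}(i) with $p_j$ as the base and does not transfer, whereas the paper's Case~III uses Theorem \ref{thm:trein}(ii) together with a separate balance of exponents; that balance must be carried out explicitly, as the engine estimate contributes a $D^{-2^j(x+1)/x}$ factor that does not decay in $t-s$.
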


\begin{proof}
First, note that
$$\max_{n \leq s \leq t \leq
p_{j+1}}\left\{\left|\frac{b_{t,s}^{2-\epsilon} }{a_{t,s}}\right|\cdot
|a_{s,n}|^{1-\epsilon}\right\} \leq \max_{n \leq s \leq t \leq
p_{j+1}}\left\{\left|\frac{b_{t,s}^{2-\epsilon} }{a_{t,s}}\right|\right\}.
$$
Now suppose $p_j \leq n \leq s \leq t \leq p_{j+1}$. As in the
proof of Lemma \ref{lemma:maxpj+1pj}, we can distinguish several
cases:
\begin{enumerate}
\item[(I)]{$q_j \leq s \leq t \leq p_{j+1}$}
\item[(II)]{$p_j\leq s \leq q_j \leq t \leq p_{j+1}$}
\item[(III)]{$p_j \leq s \leq t \leq q_j$}
\end{enumerate}

In case (I), the proof of Lemma \ref{lemma:maxpj+1pj} gives us
equation \ref{eq:CaseI}, which implies that
\begin{equation}\label{eq:CaseIgeneral}
\left|\frac{b_{t,s}^{2-\epsilon}}{a_{t,s}}\right| \leq D^{-2^{j+1}\cdot
\frac{k-2+\epsilon}{k-3/2}}.
\end{equation}

\medskip

For case (II), we can use equation \ref{eq:CaseI} again to see
that
$$
\left|\frac{b_{t,q_j}^{2-\epsilon}}{a_{t,q_j}}\right| \leq
D^{-2^{j+1}\cdot \frac{k-2+\epsilon}{k-3/2}}.
$$
Next, Theorem \ref{thm:trein} shows that
$$
\left|\frac{b_{q_j,s}^{2-\epsilon}}{a_{q_j,s}}\right| \leq \left|\frac{b_{q_j,s}^{1/x}}{a_{q_j,s}}\right|
\leq K_2^{1/x}=K_3.
$$
Combining these estimates gives
\begin{equation}\label{eq:CaseIIgeneral}
\left|\frac{b_{t,s}^{2-\epsilon}}{a_{t,s}}\right|\leq K_3 \cdot
D^{-2^{j+1}\cdot \frac{k-2+\epsilon}{k-3/2}}.
\end{equation}

\medskip

For the remaining case (III) we can once again apply Theorem \ref{thm:trein}:
\begin{equation}\label{eq:eersteIII}
\left|\frac{b_{t,s}^{2-\epsilon}}{a_{t,s}}\right|=\left|\frac{b_{t,s}}{a_{t,s}^x} \right|^{1/x} \cdot |b_{t,s}|^{2-\frac{1}{x}-\epsilon}
\leq K_2^{1/x} D^{-2^j\frac{x+1}{x^2}} D^{(t-s)(2-\epsilon-\frac{1}{x})}.
\end{equation}
On the other hand, we have the easy estimate
\begin{equation}\label{eq:tweedeIII}
\left|\frac{b_{t,s}^{2-\epsilon}}{a_{t,s}}\right| \leq
\frac{D^{(t-s)(2-\epsilon)}}{C^{t-s}} \leq
\frac{D^{(t-s)(2-\epsilon)}}{D^{k(t-s)}}=D^{-(k-2+\epsilon)(t-s)}.
\end{equation}
Note that as $t-s$ increases, estimate \ref{eq:eersteIII} will
decrease while estimate \ref{eq:tweedeIII} will increase. When
$t-s \leq \frac{2^{j+1}}{k-x}$, we can therefore use estimate
\ref{eq:tweedeIII} to get
\begin{equation*}
\left|\frac{b_{t,s}^{2-\epsilon}}{a_{t,s}}\right| \leq
D^{-(k-2+\epsilon)(t-s)}\leq D^{-\frac{k-2+\epsilon}{k-x}2^{j+1}}.
\end{equation*}
And when $t-s \geq \frac{2^{j+1}}{k-x}$, estimate \ref{eq:eersteIII}
shows that
\begin{align*}
\left|\frac{b_{t,s}^{2-\epsilon}}{a_{t,s}}\right| &\leq K_2^{1/x} D^{-2^j\frac{x+1}{x^2}} D^{(t-s)(2-\epsilon-\frac{1}{x})}
&& \leq K_2^{1/x} D^{-2^j\frac{x+1}{x^2}} D^{\frac{2-\epsilon-1/x}{k-x}2^{j+1}}\\
& = K_2^{1/x} \left(D^{-2^{j+1}} \right)^{\frac{x+1}{2x^2}-\frac{2-\epsilon-1/x}{k-x}}
&& = K_2^{1/x} \left(D^{-2^{j+1}} \right)^{\frac{\frac{5k}{9}-\frac{13}{6}+\epsilon}{k-x}}\\
& \leq K_2^{1/x} &&=K_3,
\end{align*}
since $k<\frac{11}{5}$ and $x =\frac{3}{2}$.
So we can conclude Case (III) with
\begin{equation}\label{eq:CaseIIIgeneral}
\left|\frac{b_{t,s}^{2-\epsilon}}{a_{t,s}}\right|\leq K_3
D^{-\frac{k-2+\epsilon}{k-x}2^{j+1}}.
\end{equation}

Finally note that $n\geq p_j\geq q_0+\sum_{i=1}^{j-1}(q_i-p_i)\geq
\frac{2}{k-x}+\sum_{i=1}^{j-1} \frac{2^i}{k-x}=\frac{2^j}{k-x}$ by
Lemma \ref{lemma:trein}. Combining this with equations \ref{eq:CaseIgeneral}, \ref{eq:CaseIIgeneral} and \ref{eq:CaseIIIgeneral} proves that
\begin{equation*}
\left|\frac{b_{t,s}^{2-\epsilon}}{a_{t,s}}\right| \leq K_3 D^{-2^{j+1}\cdot
\frac{k-2+\epsilon}{k-x}} \leq K_3 D^{-2n(k-2+\epsilon)}.
\end{equation*}
\end{proof}

Lemmas \ref{lemma:maxnpj+1} and \ref{lemma:afschatting} now give us an easy estimate on the coefficients of our maps $h^j_n$ and $g_n$.

\begin{corollary}
For $n \in [p_j,p_{j+1}]$ and $j \geq j_0$, all quadratic coefficients of the maps $g_n$ and $h^j_n$ are bounded by
$$
\frac{4D^2K_3}{C^2}(6YX+1) \cdot D^{-2n(k-2+\epsilon)}.
$$
Since there are only finitely many functions for $j < j_0$, we can now pick a large constant $Z$ such that the quadratic coefficients of all maps $g_n$ and $h^j_n$ are bounded by
$$
Z \cdot D^{-2n(k-2+\epsilon)}.
$$
\end{corollary}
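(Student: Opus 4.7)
The plan is to read the corollary as a direct bookkeeping combination of Lemma~\ref{lemma:afschatting}, Lemma~\ref{lemma:maxnpj+1} and Corollary~\ref{cor:d_en_beta}, after which the finitely many trains with $j < j_0$ are handled by a separate enlargement of constants. First I would fix $\delta \in (0,1]$ small enough for the choice of $(h_n)$ made in Step~1 to be valid (this is the same $\delta$ that appears in Corollary~\ref{cor:pj+1pj}). With this $\delta$ fixed and $j \geq j_0$, each of the five coefficients $\alpha_n^{0,2}, \alpha_n^{1,1}, \alpha_n^{2,0}, \beta_n^{0,2}, \beta_n^{1,1}$ of $h_n^j$ is controlled by Lemma~\ref{lemma:afschatting} by
\[
YX \cdot \max_{n\le s\le t\le p_{j+1}}\left|\frac{b_{t,s}^{2-\epsilon}}{a_{t,s}}\right|\cdot |a_{s,n}|^{1-\epsilon} \cdot \max\{\delta, D^{\epsilon(p_{j+1}-n)/2}\}.
\]
Since $\delta \le 1$ and $D^{\epsilon(p_{j+1}-n)/2} \le 1$, the last factor drops. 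Lemma~\ref{lemma:maxnpj+1} then bounds the remaining maximum by $K_3 D^{-2n(k-2+\epsilon)}$, yielding a clean estimate on every quadratic coefficient of $h_n^j$.

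For the quadratic coefficients of $g_n$, Corollary~\ref{cor:d_en_beta} already tells us that $\beta_n^{2,0}\equiv 0$ on $[p_j, p_{j+1})$, so the only remaining coefficient to control is $d_n$. The same corollary provides
\[
|d_n| \le \tfrac{4D^2}{C^2}(6YX+1)\cdot\max_{n+1\le s\le t\le p_{j+1}}\left|\frac{b_{t,s}^{2-\epsilon}}{a_{t,s}}\right|\cdot |a_{s,n+1}|^{1-\epsilon},
\]
and a second application of Lemma~\ref{lemma:maxnpj+1} turns the right-hand side into $\tfrac{4D^2 K_3}{C^2}(6YX+1)\cdot D^{-2(n+1)(k-2+\epsilon)}$. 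In the regime of interest $k$ is close to $11/5$, so $k-2+\epsilon > 0$; the factor $D^{-2(k-2+\epsilon)}$ is a harmless constant which I would absorb into the overall prefactor. Comparing the constants $YX$ (from the $h_n^j$ bound) and $\tfrac{4D^2}{C^2}(6YX+1)$ (from the $d_n$ bound), the latter dominates, so the single estimate $\tfrac{4D^2 K_3}{C^2}(6YX+1)\cdot D^{-2n(k-2+\epsilon)}$ covers every quadratic coefficient of both $h_n^j$ and $g_n$ for $j \ge j_0$.

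Finally, for the trains with $j < j_0$ there are only finitely many functions involved, each defined on a finite range of indices; the coefficients produced in Step~1 are bounded quantities. Since $n$ lies in a fixed finite set, the exponential prefactor $D^{-2n(k-2+\epsilon)}$ is bounded below by a strictly positive constant on that set, so enlarging $\tfrac{4D^2 K_3}{C^2}(6YX+1)$ to a sufficiently large $Z > 0$ absorbs all these finitely many additional terms and produces the uniform estimate $Z \cdot D^{-2n(k-2+\epsilon)}$ valid for every $n$. I do not anticipate any serious obstacle: the two real pieces of analytic work already sit inside Lemmas~\ref{lemma:afschatting} and~\ref{lemma:maxnpj+1}, and the present corollary is their combination. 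The only mildly delicate point is checking that the $\delta$-dependent factor truly disappears, which is immediate once $\delta \le 1$ is enforced.
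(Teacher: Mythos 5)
Your proposal is correct and matches the paper's intended (implicit) argument: combine Lemma~\ref{lemma:afschatting}, Lemma~\ref{lemma:maxnpj+1} and Corollary~\ref{cor:d_en_beta}, drop the factor $\max\{\delta,D^{\epsilon(p_{j+1}-n)/2}\}\le 1$, observe that $\frac{4D^2}{C^2}(6YX+1) > YX$ so the $d_n$ bound dominates, and absorb the finitely many trains with $j<j_0$ into $Z$. One small cosmetic point: for the $d_n$ estimate you do not need to pick up the extra factor $D^{-2(k-2+\epsilon)}$ and then "absorb" it. The proof of Lemma~\ref{lemma:maxnpj+1} begins by discarding the factor $|a_{s,n}|^{1-\epsilon}\le 1$, so
\[
\max_{n+1\le s\le t\le p_{j+1}}\left\{\left|\frac{b_{t,s}^{2-\epsilon}}{a_{t,s}}\right||a_{s,n+1}|^{1-\epsilon}\right\}
\le \max_{n\le s\le t\le p_{j+1}}\left\{\left|\frac{b_{t,s}^{2-\epsilon}}{a_{t,s}}\right|\right\}
\le K_3 D^{-2n(k-2+\epsilon)}
\]
directly, which gives the stated constant without any shift from $n$ to $n+1$. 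This does not affect the second half of the corollary (the choice of $Z$), but it does clean up the first half so the displayed prefactor is literally achieved.
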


Using these estimates, we can find $\Omega_{(g_n)}$:

\begin{lemma}\label{lemma:everything}
The basin of the sequence $g_1, \ldots g_{p_1-1}, M_1, g_{p_1}, \ldots$ as constructed is equal to $\mathbb C^2$.
\end{lemma}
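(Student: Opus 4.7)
The goal is to show that for every $(z_0, w_0) \in \mathbb C^2$ the forward orbit under the sequence $g_0, \ldots, g_{p_1 - 1}, M_1, g_{p_1}, \ldots$ converges to the origin. My plan is to track the Euclidean norm $R_j := \|(z_{p_j}, w_{p_j})\|$ at the start of each train; since each $M_j$ is unitary, $R_j$ also equals the norm at the end of train $j-1$, so it suffices to estimate how one train transforms $R_j$ into $R_{j+1}$. Within each train $[p_j, p_{j+1})$ every $g_n$ is the lower-triangular polynomial $(z,w) \mapsto (a_n z,\, b_n w + c_n z + d_n z^2)$, so the $z$-component decouples: $z_n = a_{n, p_j} z_{p_j}$ and $|z_n| \le D^{n - p_j} R_j$ decays geometrically.

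For the $w$-component I would apply variation of parameters,
\[
w_n = b_{n, p_j} w_{p_j} + \sum_{m = p_j}^{n - 1} b_{n, m+1} \bigl( c_m z_m + d_m z_m^2 \bigr),
\]
and estimate the three contributions separately. The purely linear parts $|b_{n, p_j} w_{p_j}|$ and $\sum |b_{n, m+1} c_m z_m|$ together yield at most $(1 + L_j) D^{L_j} R_j$ at $n = p_{j+1}$, where $L_j := p_{j+1} - p_j \ge 2^j/(k-x)$ by Lemma~\ref{lemma:trein}(iii); this decays doubly exponentially in $j$. For the quadratic term I would combine $|z_m|^2 \le D^{2(m - p_j)} R_j^2$ with the sharper, train-uniform bound $|d_m| \le K_3 D^{-2^{j+1}(k - 2 + \epsilon)/(k - x)}$ that sits inside the proof of Lemma~\ref{lemma:maxnpj+1} (just before it is weakened to $K_3 D^{-2n(k-2+\epsilon)}$ using $n \ge 2^j/(k-x)$), and sum the resulting geometric series to bound the quadratic contribution to $|w_{p_{j+1}}|$ by a constant times $D^{L_j - 2^{j+1}(k-2+\epsilon)/(k-x)} R_j^2$.

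Combining the bounds gives a recursion $R_{j+1} \le \lambda_j R_j + \mu_j R_j^2$ in which both $\lambda_j$ and $\mu_j$ tend to zero doubly exponentially: the hypothesis $k < 11/5$ with $x = 3/2$ and $\epsilon$ chosen as in Step~1 ensures $k + \epsilon < 5/2$, so $L_j - 2^{j+1}(k-2+\epsilon)/(k-x) \ge 2^j(5 - 2k - 2\epsilon)/(k - x)$ grows to infinity. In particular $\lambda_j$ and $\mu_j$ are uniformly bounded, so $R_j$ stays bounded across any initial segment of trains; once $j$ is large enough that $\lambda_j + \mu_j R_j \le 1/2$, we get $R_{j+1} \le R_j/2$, and $R_j \to 0$ geometrically. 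Together with the within-train estimates on $|z_n|$ and $|w_n|$, this gives $(z_n, w_n) \to 0$ for every starting point, which is the statement of the lemma. The main obstacle is exactly this quadratic forcing: the $n$-dependent bound $|d_m| \le Z D^{-2m(k - 2 + \epsilon)}$ that was stated in the Corollary of Step~1 is too crude inside a train with an unusually long wagon, so one genuinely has to pair $L_j$ against the train-index exponent $2^{j+1}/(k-x)$ rather than against $n$ itself, and it is this matching that consumes the hypothesis $D^{11/5} < C$.
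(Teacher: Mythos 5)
Your overall strategy is the same as the paper's: track the norm $R_j$ at the train boundaries (using that the $M_j$ are unitary), decouple the $z$-coordinate, treat the composed lower-triangular map over one train as linear part plus a quadratic forcing term $\sum b_{p_{j+1},i+1}d_i a_{i,p_j}^2 z_1^2$, and derive a recursion $R_{j+1}\le \lambda_j R_j+\mu_j R_j^2$ with $\lambda_j,\mu_j$ decaying doubly exponentially. Your route to $\mu_j$ (the train-uniform bound $|d_m|\lesssim D^{-2^{j+1}(k-2+\epsilon)/(k-x)}$ extracted from inside Lemma \ref{lemma:maxnpj+1}, paired against $L_j\ge 2^j/(k-x)$ via $\epsilon<(5-2k)/2$) differs in bookkeeping from the paper's (which splits off half a power of $b_{p_{j+1},i+1}$ into the max and sums a geometric series to get $\mu_j\lesssim D^{2^{j-1}/(k-x)}$), but both are valid and land in the same place.

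There is, however, a genuine gap at the step ``once $j$ is large enough that $\lambda_j+\mu_j R_j\le 1/2$.'' This presupposes that such a $j$ exists, which is not automatic: the recursion permits $R_{j+1}\approx \mu_j R_j^2$, so $\log R_j$ can a priori double at every train, i.e.\ $R_j$ can grow doubly exponentially while $\mu_j$ decays doubly exponentially, and one must check that the decay actually wins. It does — writing $u_j=\log R_j$ one gets $u_{j+1}\le 2u_j - c\,2^j+O(1)$, and the homogeneous part contributes $2^{j}u_{j_0}$ while the forcing contributes $-c(j-j_0)2^{j-1}$, so the extra linear factor $(j-j_0)$ forces $u_j\to-\infty$ — but this is precisely the delicate point, and it is what the paper's two-case contradiction argument (comparing $(1/2)^{(j-j_1)2^{j-1-j_1}}$ against $R_{j_1}^{2^{j-j_1}}$) is there to establish. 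As written, your proposal assumes the conclusion of that argument. A second, smaller omission: convergence to $0$ must also be verified \emph{inside} each train, not just along the subsequence $(p_j)$; mid-train the partial quadratic sum is only bounded by $\mathrm{const}\cdot D^{-2^{j+1}(k-2+\epsilon)/(k-x)}R_j^2$ with no compensating $D^{L_j}$, so one needs the already-established super-fast decay $R_j\le D^{2^{j-1}/(k-x)}$ (and again $\epsilon<(5-2k)/2$) to close this; you assert the within-train estimates but do not carry them out.
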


\begin{proof}
Let $G_j=M_j \circ g_{p_{j+1},p_j}$ and write $G_{j,i} = G_{j-1}\circ \ldots \circ G_i$ as usual. For $n \in [p_j,p_{j+1})$, we have $g_n(z)=L_n(z)+\left(0,d_nz_1^2\right)$, where $L_n$ is the linear part of $f$. Hence we have
$$
g_{p_{j+1},p_j}(z)=L_{p_{j+1},p_j}(z)+\left(0, \sum_{i=p_j}^{p_{j+1}-1} b_{p_{j+1},i+1}d_ia_{i,p_j}^2z_1^2 \right).
$$
By Corollary \ref{cor:d_en_beta}, we know that
$$
|d_i| \leq Z \max_{i+1 \leq s \leq t \leq p_{j+1}} \left\{\left|\frac{b_{t,s}^{2-\epsilon}}{a_{t,s}}\right||a_{s,n}|^{1-\epsilon}\right\},
$$
for $i \in [p_{j},p_{j+1})$. Therefore we find
\begin{align*}
\left|\sum_{i=p_j}^{p_{j+1}-1} b_{p_{j+1},i+1}d_ia_{i,p_j}^2\right| \leq & \sum_{i=p_j}^{p_{j+1}-1} Z|b_{p_{j+1},i+1}|^{1/2}|a_{i,p_j}|^2 \max_{i+1 \leq s \leq t \leq p_{j+1}} \left\{\left|\frac{b_{t,s}^{\frac{5}{2}-\epsilon}}{a_{t,s}}\right||a_{s,n}|^{1-\epsilon}\right\}\\
\leq & \sum_{i=p_j}^{p_{j+1}-1} Z|b_{p_{j+1},i+1}|^{1/2}|a_{i,p_j}|^2 \\
\leq & \sum_{i=p_j}^{p_{j+1}-1} \frac{Z}{C}D^{\frac{p_{j+1}-p_j}{2}}D^{i-p_j}\\
\leq & \frac{Z}{C(1-D)}D^{\frac{2^{j-1}}{k-x}},
\end{align*}
where the last step follows by Lemma \ref{lemma:trein}(iii). Given $z \in \mathbb C^2$ we write $R_j = \|G_{j,0}(z)\|$.
Since $M_j$ is unitary, we obtain
$$
R_{j+1} \leq D^{\frac{2^j}{k-x}} R_j +\frac{Z}{C(1-D)} D^{\frac{2^{j-1}}{k-x}}R_j^2.
$$
Hence we have the following two cases.
\begin{enumerate}
\item[(a)]{If $R_j \leq D^{\frac{2^{j-1}}{k-x}}$:\;\;\; $R_{j+1} \leq \left(1+\frac{Z}{C(1-D)} \right)D^{\frac{2^j}{k-x}} R_j$.}
\item[(b)]{If $R_j > D^{\frac{2^{j-1}}{k-x}}$:\;\;\; $R_{j+1} \leq \left(1+\frac{Z}{C(1-D)} \right)D^{\frac{2^{j-1}}{k-x}} R_{j+1}^2$.}
\end{enumerate}
Let $j_1$ be such that $\left(1+\frac{Z}{C(1-D)} \right)D^{\frac{2^{j_1-1}}{k-x}} < \frac{1}{2}$. Then we can derive from (a) that for $j \geq j_1$ and $R_j \leq D^{\frac{2^{j-1}}{k-x}}$ we have
$$
R_{j+1} \leq \frac{1}{2}D^{\frac{2^j}{k-x}}.
$$
Hence if $R_j \le D^{\frac{2^{j-1}}{k-x}}$ for some $j \ge j_1$, then the same holds for all larger $j$.

Suppose for the purpose of a contradiction that $R_j > D^{\frac{2^{j-1}}{k-x}}$ for each $j \geq j_1$. Then for $j \geq j_1$ we have:
\begin{align*}
R_{j+1} & \leq \left(1+ \frac{Z}{C(1-D)} \right)D^{\frac{2^{j-1}}{k-x}} R_j^2\\
& \leq \left(\frac{1}{2} \right)^{2^{j-j_1}} R_j^2.
\end{align*}
Hence for all $j \ge j_1$ sufficiently large we have
\begin{align*}
R_j = \|G_{j,j_1}\left( G_{j_1,0}(z) \right)\| \leq \left( \frac{1}{2}\right)^{(j-j_1)2^{j-1-j_1}}(R_{j_1})^{2^{j-j_1}} \leq D^{\frac{2^{j-1}}{k-x}}.
\end{align*}
This contradicts the assumption that $R_j>D^{\frac{2^{j-1}}{k-x}}$ for each $j \geq j_1$. Hence there exists $j_z \geq j_1$ such that $R_j \leq D^{\frac{2^{j-1}}{k-x}}$ for all $j \ge j_z$.

Now let $n \in [p_j,p_{j+1})$ and $j \geq j_z$, and recall that
$$
g_{n,p_j}(z)=L_{n,p_j}(z)+\left(0, \sum_{i=p_j}^{n-1} b_{n,i+1}d_ia_{i,p_j}^2z_1^2 \right).
$$
Using Lemma \ref{lemma:maxnpj+1} and its proof we find
$$
|b_{n,i+1}d_ia_{i,p_j}^2| \leq Z \cdot D^{-2^{j+1}\frac{k-2+\epsilon}{k-x}} |a_{i,p_j}|,
$$
and hence
$$
|\sum_{i=p_j}^{n-1} b_{n,i+1}d_ia_{i,p_j}^2| \leq \frac{Z}{1-D}D^{-2^{j+1}\frac{k-2+\epsilon}{k-x}}.
$$
For $j \geq j_z$ and $n \in [p_j,p_{j+1})$ we therefore find that
\begin{align*}
\|g_{n,1}(z)\|&=\|g_{n,p_j}\left(G_j(z)\right)\|\\
 & \leq D^{n-p_j}D^{\frac{2^{j-1}}{k-x}}+ \frac{Z}{1-D}D^{-2^{j+1}\frac{k-2+\epsilon}{k-x}}\left(D^{\frac{2^{j-1}}{k-x}}\right)^2\\
& \leq D^{\frac{2^{j-1}}{k-x}} + \frac{Z}{1-D}D^{2^{j+1}\frac{-k+5/2-\epsilon}{k-x}}.
\end{align*}
Since $\epsilon < \frac{5-2k}{2}$, we see that $\|g_{n,1}(z)\| \rightarrow 0$ as $n \rightarrow \infty$, which completes the proof.
\end{proof}

\medskip

\noindent {\bf Step 3:} The biholomorphism.

\medskip

What is left to show is that the basins of the sequences
$$
f_0, \ldots f_{p_1-1}, M_1, f_{p_1}, \ldots f_{p_2-1}, M_2, f_{p_2}, \ldots
$$ and
$$
g_0, \ldots g_{p_1-1}, M_1, g_{p_1}, \ldots g_{p_2-1}, M_2, g_{p_2}, \ldots
$$ are equivalent. We can simplify notation by composing the matrices $M_j$ with neighbouring functions. Define:
\begin{align}
\bar{f}_n&=\begin{cases} f_{p_j} \circ M_j &
\textrm{ if }n=p_{j}\\
f_n & \textrm{ if } n \notin \{p_1,p_2,\ldots\}
\end{cases}\\
\bar{g}_n&=\begin{cases} g_{p_j} \circ M_j &
\textrm{ if }n=p_{j}\\
g_n & \textrm{ if } n \notin \{p_1,p_2,\ldots\}
\end{cases}\\
\bar{h}_n&=h_n^j  \textrm{ if } n \in [p_j+1,p_{j+1}]&
\end{align}
Hence from now on we need to study the basins of the sequences $(\bar{f}_n)$ and $(\bar{g}_n)$. We note that the sequence $(\bar{f_n})$ still satisfies
$$
C\|z\| \leq \|\bar{f}_n(z)\|\leq D\|z\|
$$
for $z \in \mathbb B$. The following diagram commutes up to degree 2:
$$
\begin{CD}
 \mathbb{C}^2 @> \bar{f}_0 >> \mathbb{C}^2 @> \bar{f}_1 >> \mathbb{C}^2 @> \bar{f}_2 >> \mathbb{C}^2 @> \bar{f}_3 >> \mathbb{C}^2 @> \bar{f}_4 >> \mathbb C^2 @> \bar{f}_5 >> \cdots\\
@VV \bar{h}_0 V @VV \bar{h}_1 V @VV \bar{h}_2 V @VV \bar{h}_3 V @VV \bar{h}_4 V @VV \bar{h}_5 V\\
\mathbb{C}^2 @> \bar{g}_0 >> \mathbb{C}^2 @> \bar{g}_1 >> \mathbb{C}^2 @> \bar{g}_2 >> \mathbb{C}^2 @> \bar{g}_3 >> \mathbb{C}^2 @> \bar{g}_4 >> \mathbb C^2 @> \bar{g}_5 >> \cdots\\
\end{CD}
$$
As usual we define
$$
\Phi_n := \bar{g}_{n,0}^{-1} \circ \bar{h}_n \circ \bar{f}_{n,0}.
$$
It will also be convenient to write
$$
\phi_{n,m} = \bar{g}_{n,m}^{-1} \circ \bar{h}_n \circ \bar{f}_{n,m}.
$$
Our goal is to show that the sequence $(\Phi_n)$ converges, uniformly on compact subsets of the basin $\Omega_{(\bar{f}_n)}$ to an
isomorphism from $\Omega_{(\bar{f}_n)}$ to $\Omega_{(\bar{g}_n)} = \mathbb C^2$. In order to
accomplish this we need a number of estimates.

Pick a constant $\lambda<1$ such that $D^k < \lambda C$, and define the radius
$$
r_n=\min\left\{ \frac{(1-\lambda)C^2}{2Z}D^{2n(k-2+\epsilon)},
\frac{1}{48Z} \cdot
D^{2n(k-2+\epsilon)},\frac{1}{2}D^{\frac{4n(k-2+\epsilon)}{3-k}}\right\}.
$$
All estimates for $\bar{f}_n$, $\bar{g}_n$ and $\bar{h}_n$ will be valid on $\mathbb{B}(0,r_n)$.

\begin{lemma}\label{lemma:part2}
For $n \in \mathbb{N}$ and $z,w \in \mathbb{B}(0,r_n)$, we have
$$
\|\bar{g}_n^{-1}(z)-\bar{g}_n^{-1}(w)\|\leq \frac{1}{\lambda C} \|z-w\|.
$$
\end{lemma}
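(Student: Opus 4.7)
The plan is to reduce the statement to a Jacobian estimate on the convex ball $\mathbb{B}(0,r_n)$. First I would note that $\bar{g}_n$ is a polynomial automorphism of $\mathbb{C}^2$: for $n\notin\{p_1,p_2,\ldots\}$ it coincides with the lower triangular polynomial $g_n(z_1,z_2)=(a_n z_1,\ b_n z_2+c_n z_1+d_n z_1^2)$, and at the joining indices $n=p_j$ it is $g_{p_j}\circ M_j$ with $M_j$ unitary. Since $M_j^{-1}$ is an isometry of $\mathbb{C}^2$, we have $\|\bar{g}_n^{-1}(z)-\bar{g}_n^{-1}(w)\|=\|g_n^{-1}(z)-g_n^{-1}(w)\|$ with the obvious index convention, so it suffices to control the Lipschitz constant of $g_n^{-1}$ on $\mathbb{B}(0,r_n)$.

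An explicit inversion yields
$$
g_n^{-1}(u_1,u_2)=\left(\frac{u_1}{a_n},\ \frac{u_2}{b_n}-\frac{c_n u_1}{a_n b_n}-\frac{d_n u_1^2}{a_n^2 b_n}\right),
$$
so $Dg_n^{-1}(u)=L_n^{-1}+E(u)$, where $L_n^{-1}$ is the derivative of the linear part of $g_n^{-1}$ at the origin and $E(u)$ has its unique nonzero entry, $-2d_n u_1/(a_n^2 b_n)$, in position $(2,1)$. Because $\bar{h}_n=\mathrm{Id}+O(\|z\|^2)$, taking linear parts in the commuting diagram forces $L_n=D\bar{g}_n(0)=D\bar{f}_n(0)$; combined with the lower bound $C\|z\|\leq\|\bar{f}_n(z)\|$ on $\mathbb{B}$, which linearizes to $\|L_n z\|\geq C\|z\|$ on all of $\mathbb{C}^2$, this yields $\|L_n^{-1}\|_{op}\leq 1/C$.

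For the error term, the corollary following Lemma~\ref{lemma:maxnpj+1} gives $|d_n|\leq Z\cdot D^{-2n(k-2+\epsilon)}$, while the first term in the definition of $r_n$ gives $r_n\leq \tfrac{(1-\lambda)C^2}{2Z}D^{2n(k-2+\epsilon)}$. Using $|a_n|,|b_n|\geq C$ and $|u_1|\leq r_n$, the two exponential factors cancel to produce $\|E(u)\|_{op}\leq 2|d_n|r_n/C^3\leq (1-\lambda)/C$. Adding the two estimates, $\|Dg_n^{-1}(u)\|_{op}\leq (2-\lambda)/C \leq 1/(\lambda C)$, the final inequality being equivalent to the elementary identity $(1-\lambda)^2\geq 0$. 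The mean value inequality along the segment from $z$ to $w$ in the convex ball $\mathbb{B}(0,r_n)$ then completes the proof. There is no essential obstacle beyond matching the powers of $D^{2n(k-2+\epsilon)}$; only the first of the three terms defining $r_n$ is used here, with the remaining two reserved for the convergence argument of Step~3.
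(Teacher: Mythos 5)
Your proof is correct and essentially matches the paper's argument. The only cosmetic difference is that the paper estimates $\|g_n^{-1}(z)-g_n^{-1}(w)\|$ directly via the factorization $z_1^2-w_1^2=(z_1-w_1)(z_1+w_1)$, while you bound $\|Dg_n^{-1}\|_{\mathrm{op}}$ and invoke the mean value inequality on the convex ball; the ingredients (the bound $\|L_n^{-1}\|\le 1/C$, the coefficient estimate $|d_n|\le Z\,D^{-2n(k-2+\epsilon)}$, the first term defining $r_n$, and the inequality $2-\lambda\le 1/\lambda$) are identical.
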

\begin{proof}
Writing $g_n(z_1,z_2)=L_n(z_1,z_2)+(0,d_n
z_1^2)$, we obtain
\begin{equation*}
g_n^{-1}(z_1,z_2)=L_n^{-1}(z_1,z_2)+\left(0,
\frac{-d_n}{a_n^2b_n}z_1^2 \right).
\end{equation*}
And hence for two points $z,w \in \mathbb{C}^2$ we get
\begin{align*}
\|g_n^{-1}(z)-g_n^{-1}(w)\| &=\|
L_n^{-1}(z-w)+\frac{-d_n}{a_n^2b_n}\left(0,z_1^2-w_1^2\right)\|\\
& \leq \frac{1}{C} \|z-w\|+\frac{|d_n|}{C^3}\|z-w\|\cdot \|z+w\|\\
&\leq \frac{1}{C} \|z-w\| \cdot \left(1+ \frac{Z
D^{-2n(k-2+\epsilon)}}{C^2}\|z+w\|\right)
\end{align*}
Using our assumptions on $\lambda<1$, $\|z\|$ and $\|w\|$ we have
\begin{align*}
\|g_n^{-1}(z)-g_n^{-1}(w)\|
 \leq \frac{1}{\lambda C} \|z-w\| \cdot \left(\lambda+ \lambda
(1-\lambda)\right)
 \leq \frac{1}{\lambda C} \|z-w\|.
\end{align*}
Since the matrices $M_j$ are unitary we immediate obtain the same estimates for the maps $\bar{g}_n$.
\end{proof}

\begin{lemma}\label{lemma:part3}
Let $n \in \mathbb N$. If $z \in \mathbb{B}(0,r_n)$, we have
$$
\|\bar{h}_n(z)\|\leq 2 \|z\|.
$$
\end{lemma}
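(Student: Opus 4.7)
The proof is a direct consequence of the bound on the quadratic coefficients of $\bar h_n$ combined with the definition of the radius $r_n$. My plan is as follows.

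First I would unpack the definition of $\bar h_n$. Recall that $\bar h_n = h_n^j$ whenever $n \in [p_j+1, p_{j+1}]$, and each such $h_n^j$ has the explicit form
\[
\bar h_n(z,w) = (z,w) + \bigl(\alpha_n^{0,2} w^2 + \alpha_n^{1,1} zw + \alpha_n^{2,0} z^2,\; \beta_n^{0,2} w^2 + \beta_n^{1,1} zw + \beta_n^{2,0} z^2\bigr).
\]
By the Corollary immediately following Lemma \ref{lemma:maxnpj+1}, every one of the six quadratic coefficients is bounded in absolute value by $Z \cdot D^{-2n(k-2+\epsilon)}$. Since each monomial $z^iw^{2-i}$ is bounded by $\|(z,w)\|^2$, a crude triangle-inequality estimate (the exact constant is immaterial as long as it is at most $48$) yields
\[
\|\bar h_n(z) - z\| \;\le\; 6\, Z\, D^{-2n(k-2+\epsilon)} \cdot \|z\|^2.
\]

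Next I would feed in the hypothesis $z \in \mathbb{B}(0, r_n)$. By definition of $r_n$ we have in particular
\[
\|z\| \;\le\; r_n \;\le\; \frac{1}{48Z}\, D^{2n(k-2+\epsilon)},
\]
so that $6\,Z\,D^{-2n(k-2+\epsilon)}\cdot \|z\| \le \tfrac{1}{8} \le 1$. Substituting this into the previous estimate gives $\|\bar h_n(z) - z\| \le \|z\|$, and then the triangle inequality
\[
\|\bar h_n(z)\| \;\le\; \|z\| + \|\bar h_n(z) - z\| \;\le\; 2\|z\|
\]
completes the proof. No serious obstacle arises; the whole point of the middle entry $\tfrac{1}{48Z} D^{2n(k-2+\epsilon)}$ in the definition of $r_n$ is precisely to absorb the quadratic-term growth rate $Z\cdot D^{-2n(k-2+\epsilon)}$ coming from the previous corollary, leaving a comfortable safety margin.
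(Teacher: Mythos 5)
Your proof is correct and follows precisely the argument the paper intends — the paper's own proof of Lemma \ref{lemma:part3} is the one-line remark that it ``follows immediately from our estimates on the coefficients of the maps $h_n$,'' and you have simply filled in that computation: bound the six quadratic coefficients by $Z D^{-2n(k-2+\epsilon)}$ via the corollary to Lemma \ref{lemma:maxnpj+1}, use the entry $\frac{1}{48Z}D^{2n(k-2+\epsilon)}$ in the definition of $r_n$ to absorb that factor, and conclude by the triangle inequality.
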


The proof follows immediately from our estimates on the coefficients of the maps $h_n$.

\begin{lemma}\label{lemma:part4}
There exists a constant $M>0$ such that for all $n \in \mathbb N$ and $z \in \mathbb{B}(0,r_n)$,
we have
$$
\|\bar{h}_n(z)-\bar{g}_n^{-1} \circ \bar{h}_{n+1} \circ \bar{f}_n (z)\| \leq M \cdot \|z\|^k.
$$
\end{lemma}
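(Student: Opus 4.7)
My plan is to reduce the estimate to a Taylor-expansion computation for the \emph{forward error}
\[
E_n^+(z) \;:=\; \bar g_n\bigl(\bar h_n(z)\bigr) - \bar h_{n+1}\bigl(\bar f_n(z)\bigr),
\]
and then exploit the definition of $r_n$ to absorb the growth of the quadratic coefficients. First I would verify that for $z \in \mathbb{B}(0,r_n)$ every intermediate point lies where Lemma~\ref{lemma:part2} applies: Lemma~\ref{lemma:part3}, the contraction $\|\bar f_n(z)\|\le D\|z\|$, and the bound $r_n \le \tfrac{1}{48Z}D^{2n(k-2+\epsilon)}$ together give $\|\bar h_{n+1}(\bar f_n(z))\| \le 2D\|z\|$, so that $\bar g_n^{-1}$ is $\tfrac{1}{\lambda C}$-Lipschitz there. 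Writing $\bar h_n = \bar g_n^{-1}\circ \bar g_n\circ \bar h_n$ then yields
\[
\bigl\|\bar h_n(z)-\bar g_n^{-1}(\bar h_{n+1}(\bar f_n(z)))\bigr\| \;\le\; \frac{1}{\lambda C}\,\|E_n^+(z)\|,
\]
so it suffices to show $\|E_n^+(z)\|\le M'\|z\|^k$ uniformly in $n$ on $\mathbb{B}(0,r_n)$.

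Next I would expand $E_n^+$ into its homogeneous components. By construction the diagram commutes modulo degree $2$, so $E_n^+$ vanishes to order $3$ at the origin. Writing $\bar f_n(z) = A_n z + F_n(z)$ with $\|A_n\|\le D$ and uniform Cauchy bounds on $F_n$, $\bar h_{n+1}(w) = w + H_{n+1}(w,w)$, and $\bar g_n(u) = A_n u + (0,d_n u_1^2)$, the quadratic data $H_{n+1}$ and $d_n$ are controlled by $\Lambda_n := Z\,D^{-2n(k-2+\epsilon)}$ (by the corollary preceding this lemma). A direct inspection of the compositions shows that
\[
\|E_n^{+,[m]}(z)\| \;\le\;
\begin{cases} B_3\,\Lambda_n^{2}\,\|z\|^{3} & \text{if } m=3,\\ B_4\,\Lambda_n^{3}\,\|z\|^{4} & \text{if } m=4,\\ B_m\,\Lambda_n\,\|z\|^{m} & \text{if } m\ge 5,\end{cases}
\]
with constants $B_m$ at most geometric in $m$. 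The first two lines come from $\bar g_n\circ\bar h_n$, a polynomial of degree $\le 4$ whose only high-order coefficients are $d_n\cdot H_n$ at degree $3$ and $d_n\cdot H_n^{2}$ at degree $4$; the third line reflects that in $\bar h_{n+1}\circ \bar f_n$ the quadratic term $H_{n+1}$ can appear at most once.

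To conclude I would absorb each $\|z\|^m$ factor into $\|z\|^k$ using the two relevant bounds on $r_n$. The estimate $r_n\le \tfrac{1}{2}D^{4n(k-2+\epsilon)/(3-k)}$ yields $r_n^{3-k}\le 2^{k-3}Z^{2}\Lambda_n^{-2}$, which exactly cancels the $\Lambda_n^{2}$ in the degree-$3$ term. The estimate $r_n\le \tfrac{1}{48Z}D^{2n(k-2+\epsilon)} = (48\Lambda_n)^{-1}$ provides, for each extra power of $\|z\|$ beyond degree $3$, a factor of at most $(48\Lambda_n)^{-1}$: this tames the extra $\Lambda_n$ in the degree-$4$ estimate and delivers a convergent geometric series over $m\ge 5$. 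Summing and applying the Lipschitz reduction from the first paragraph produces the claimed bound $\|E_n(z)\|\le M\|z\|^k$ with $M$ independent of $n$. The main obstacle is the combinatorial bookkeeping in the second step: one must verify precisely how many factors of $\Lambda_n$ appear in each homogeneous component of $E_n^+$, in order to check that the degree-$4$ contribution (with its three factors of $\Lambda_n$) is still absorbed by the two powers of $r_n$ available, and that no higher-degree component accidentally picks up more than one factor of $\Lambda_n$.
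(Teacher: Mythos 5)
Your proposal follows essentially the same route as the paper: expand the commuting-up-to-degree-two relation into homogeneous components, track the powers of $\Lambda_n = Z D^{-2n(k-2+\epsilon)}$ that appear in each degree, and let the three bounds built into the definition of $r_n$ absorb them. The only real variant is that you estimate the \emph{forward} error $\bar g_n\circ\bar h_n - \bar h_{n+1}\circ\bar f_n$ and then peel off $\bar g_n^{-1}$ via the Lipschitz bound of Lemma~\ref{lemma:part2}, whereas the paper directly estimates the Taylor coefficients of $\bar g_n^{-1}\circ\bar h_{n+1}\circ\bar f_n-\bar h_n$. Your version is a mild simplification of the bookkeeping (the forward error is a sum of two degree-$\le 4$ polynomials plus the high-order part of $\bar h_{n+1}\circ\bar f_n$, so one never composes with the quadratic part of $\bar g_n^{-1}$), and the powers of $\Lambda_n$ you record in degrees $3$, $4$, and $\ge 5$ are correct; the absorption via $r_n^{3-k}\lesssim\Lambda_n^{-2}$ and $r_n\lesssim\Lambda_n^{-1}$ then closes the argument exactly as in the paper.

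One small gap in the reduction step: to invoke Lemma~\ref{lemma:part2} you need \emph{both} $\bar g_n(\bar h_n(z))$ and $\bar h_{n+1}(\bar f_n(z))$ to lie in $\mathbb B(0,r_n)$, but the bound you quote, $\|\bar h_{n+1}(\bar f_n(z))\|\le 2D\|z\|\le 2Dr_n$, only lands inside $\mathbb B(0,r_n)$ when $D<\tfrac12$, which is not assumed. This is easily repaired — the factor $2$ in Lemma~\ref{lemma:part3} is generous (with $r_n\le\frac{1}{48Z}D^{2n(k-2+\epsilon)}$ one actually gets $\|\bar h_{n+1}(w)\|\le\frac98\|w\|$), and together with the observation that $\bar g_n(\bar h_n(z))$ differs from $\bar h_{n+1}(\bar f_n(z))$ by $O(\|z\|^3)$ one can push both points below $r_n$ for all $D<1$ — but the argument as written skips this verification, so you should supply it.
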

\begin{proof}
Recall that the linear and quadratic parts of the map
$\bar{g}_n^{-1} \circ \bar{h}_{n+1} \circ \bar{f}_n$ are exactly
equal to $\bar{h}_n$. Therefore we need to find estimates on the
higher order terms of $\bar{g}_n^{-1} \circ \bar{h}_{n+1} \circ
\bar{f}_n$. Since the matrices $M_j$ are unitary, we can work with
the $f$'s, $g$'s and $h$'s instead.

Since the maps $f_n$ are uniformly attracting, it follows from the Cauchy-estimates that the degree $m$ terms of $f_n$ are bounded by $D (\sqrt{2})^m$.

A tedious but straightforward computation shows that the third order part is bounded by
\begin{align*}
\frac{19D^2Z^2}{C^3}D^{-4n(k-2+\epsilon)} 3^4
\sqrt{2}^3 \|(z_1,z_2)\|^3,
\end{align*}
and the part of degree $\ell\ge 4$ is bounded by
\begin{align*}
\frac{28D^2 Z^3}{C^3}D^{-6n(k-2+\epsilon)} \ell^5
\sqrt{2}^\ell \|(z_1,z_2)\|^\ell.
\end{align*}
Plugging in our bound on $\|z\|$ we obtain the required estimate.
\end{proof}

Our next goal is to combine the estimates above to prove that
$\varphi_{n,m}$ is close to $\bar{h}_m$ on some neighborhood of
the origin. To achieve this, we need to make sure that we can
apply our estimates every step of the way. Therefore, we define a new
constant
$$s_n=\left(\frac{M}{1-\frac{D^k}{\lambda C}}+2 \right)^{-1}r_n,$$
to give ourselves some wiggle room.

Since $k<\frac{11}{5}$ and $\epsilon<\frac{11-5k}{4}$, we have
$\frac{4(k-2+\epsilon)}{3-k} < 1$. Therefore $\frac{s_n}{D^n}$
increases exponentially and $Ds_n < s_{n+1}$. So if $z \in
\mathbb{B}(0,s_n)$, then $\bar{f}_n(z) \in \mathbb{B}(0,s_{n+1})$.
This is the point where we've really used that $k < \frac{11}{5}$.

Write $V_u=\bar{f}_{u,1}^{-1}\left(\mathbb{B}(0,1)\right)$ for $u
\in \mathbb{N}$. Next, let $v(u) \in \mathbb N$ be minimal such
that
\begin{equation}\label{eq:v(u)}
D^{v(u)-u} <s_{v(u)}.
\end{equation}
For any $m \geq v(u)$ and $z \in V_u$, we now have $\|\bar{f}_{m,1}(z)\|=\|\bar{f}_{m,u}\left(\bar{f}_{u,1}(z)\right)\| \leq D^{m-u} \leq s_m$.

\begin{lemma}\label{lemma:afstanden}
For all $m , n \in \mathbb{N}$ and $w \in \mathbb{B}(0,s_m)$ we have
\begin{enumerate}
\item[(i)]{
$\|\varphi_{n+m,m} \left(w\right)-\bar{h}_{m}(w)\|\leq
\left(\frac{M}{1-\frac{D^k}{\lambda C}} \right)
\|w\|^k,$}
\item[(ii)]{$\|\varphi_{n+m, m}
(w)\| \leq
\left(\frac{M}{1-\frac{D^k}{\lambda C}}+2 \right)\|w\|.$}
\end{enumerate}
\end{lemma}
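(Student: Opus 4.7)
The plan is to establish both (i) and (ii) by a telescoping decomposition
\begin{equation*}
\varphi_{n+m,m}(w) - \bar{h}_m(w) = \sum_{j=m}^{n+m-1} \bigl(\varphi_{j+1,m}(w) - \varphi_{j,m}(w)\bigr),
\end{equation*}
estimating each summand by combining Lemma \ref{lemma:part4} with an iterated use of the Lipschitz bound of Lemma \ref{lemma:part2}. The base case $n=0$ is immediate, since $\varphi_{m,m} = \bar{h}_m$: (i) holds trivially and (ii) reduces to Lemma \ref{lemma:part3}.

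For each $j$, using $\bar{g}_{j+1,m}^{-1} = \bar{g}_{j,m}^{-1} \circ \bar{g}_j^{-1}$ and $\bar{f}_{j+1,m} = \bar{f}_j \circ \bar{f}_{j,m}$, I rewrite the $j$-th summand as
\begin{equation*}
\bar{g}_{j,m}^{-1}\bigl((\bar{g}_j^{-1}\circ\bar{h}_{j+1}\circ\bar{f}_j)(u_j)\bigr) - \bar{g}_{j,m}^{-1}\bigl(\bar{h}_j(u_j)\bigr),
\end{equation*}
where $u_j := \bar{f}_{j,m}(w)$. Since $\bar{f}_i$ maps $\mathbb{B}(0,s_i)$ into $\mathbb{B}(0,s_{i+1})$ (as already observed before equation \eqref{eq:v(u)}), we have $u_j \in \mathbb{B}(0,s_j) \subset \mathbb{B}(0,r_j)$, so Lemma \ref{lemma:part4} provides
\begin{equation*}
\bigl\|(\bar{g}_j^{-1}\circ\bar{h}_{j+1}\circ\bar{f}_j)(u_j) - \bar{h}_j(u_j)\bigr\| \leq M \|u_j\|^k \leq M D^{k(j-m)} \|w\|^k.
\end{equation*}
Applying Lemma \ref{lemma:part2} once for each of the $j-m$ factors making up $\bar{g}_{j,m}^{-1}$ then yields
\begin{equation*}
\bigl\|\varphi_{j+1,m}(w) - \varphi_{j,m}(w)\bigr\| \leq M\Bigl(\frac{D^k}{\lambda C}\Bigr)^{j-m}\|w\|^k.
\end{equation*}
Summing this geometric series in $j$ produces (i). Part (ii) then follows from (i) together with the bound $\|\bar{h}_m(w)\| \leq 2\|w\|$ from Lemma \ref{lemma:part3}, using $\|w\|^k \leq \|w\|$, which holds since $w$ lies in the unit ball.

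The main obstacle, and the reason for the precise form of the radii $r_n$ and $s_n$, is verifying that every intermediate point produced while iteratively unwinding $\bar{g}_{j,m}^{-1}$ remains inside its corresponding ball $\mathbb{B}(0,r_i)$, so that Lemma \ref{lemma:part2} is applicable at every stage of the composition. The constant $\kappa := M/(1-D^k/(\lambda C)) + 2$ built into $s_n = r_n/\kappa$ is exactly the buffer designed for this: at each of the $j-m$ iterations, the decay factor $D^{j-m}$ inherited from $\bar{f}_{j,m}(w)$ competes with the possible expansion factor $1/(\lambda C)$ per step, and the chosen growth rate of the $r_n$ absorbs the residual. Once this bookkeeping is carried out, the estimates above give the claimed bounds directly.
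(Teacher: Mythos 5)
Your telescoping decomposition is the unrolled form of the paper's argument, and the arithmetic of your estimates (the bound $M(D^k/(\lambda C))^{j-m}\|w\|^k$ per summand, summed as a geometric series) reproduces the paper's constant exactly. The genuine gap is in the step you defer as ``bookkeeping'': applying Lemma \ref{lemma:part2} once for each factor of $\bar{g}_{j,m}^{-1}$ requires both points to lie in $\mathbb{B}(0,r_i)$ at every intermediate stage $i$, and the mechanism you propose for guaranteeing this is quantitatively wrong. Per step of the unwinding you may lose a factor $1/(\lambda C)$ while the contraction of the $\bar f$'s only supplies a factor $D$; since $\lambda C < C < D$, the net factor $D/(\lambda C)$ exceeds $1$, so the naive forward bound on the intermediate points \emph{grows} geometrically. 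The backward growth of the radii does not absorb this residual: one would need roughly $D^{1+4(k-2+\epsilon)/(3-k)}\le\lambda C$, which does not follow from $D^k<\lambda C$ unless $k^2\ge 5-4\epsilon$, and here $k<11/5$ so $k^2<4.84$. So the buffer built into $s_n=r_n/\kappa$ is not, by itself, what keeps the intermediate points admissible.

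What actually controls the intermediate points is the identity $\bar g_{j,i}^{-1}(\bar h_j(u_j))=\varphi_{j,i}(\bar f_{i,m}(w))$ and $\bar g_{j,i}^{-1}\bigl((\bar g_j^{-1}\circ\bar h_{j+1}\circ\bar f_j)(u_j)\bigr)=\varphi_{j+1,i}(\bar f_{i,m}(w))$, together with part (ii) of the very lemma being proved, applied to these \emph{shorter} compositions: that gives $\|\varphi_{j,i}(\bar f_{i,m}(w))\|\le\kappa s_i=r_i$, which is exactly where $\kappa$ enters. This forces the argument into a simultaneous induction in which (i) and (ii) are proved together, with (ii) for shorter lengths available before (i) for the current length; deriving (ii) from (i) only at the very end, as you do, is circular in this respect. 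The paper avoids the whole issue by running the recursion one step at a time, writing $\varphi_{m+(n+1),m}(w)=\bar g_m^{-1}\bigl(\varphi_{(m+1)+n,m+1}(\bar f_m(w))\bigr)$, so that only a single application of Lemma \ref{lemma:part2} is needed per inductive step and the factor $D^k$ from $\|\bar f_m(w)\|^k\le D^k\|w\|^k$ offsets the single $1/(\lambda C)$. Your telescoping can be repaired by embedding it in that same induction on the length of the composition, but as written the only nontrivial point of the proof is missing.
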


\begin{proof}
We prove both statements simultaneously by induction on
$n$. For $n=0$ the first statement is trivial, and the second statement follows immediately
from lemma \ref{lemma:part3}.

Now assume that $n\in \mathbb N$ is such that both statements hold
for all $m \in \mathbb{N}$ and $w \in \mathbb{B}(0,s_m)$. We fix
$m \in \mathbb N$ and $w \in \mathbb{B}\left(0,s_{m}\right)$.

We have $\bar{f}_m(w)\in \mathbb{B}\left(0,s_{m+1}\right)$, so by our induction hypothesis, we know that the lemma also holds for $n$, $m+1$ and $\bar{f}_m(w)$.
Since $\|\bar{f}_m(w)\| \leq s_{m+1}$, Lemma \ref{lemma:part3} shows that $\| \bar{h}_m \circ \bar{f}_m(w)\| \leq 2 s_{m+1} \leq r_{m+1}$. This puts us into position to apply Lemma \ref{lemma:part2}:
\begin{align*}
&\|\varphi_{m+(n+1),m} (w)
-\bar{g}_{m}^{-1}\circ
\bar{h}_{m+1}\left(\bar{f}_{m}(w) \right)\|\\
=&\|\bar{g}_{m}^{-1} \left(
\varphi_{(m+1)+n,m+1}
\left(\bar{f}_{m}(w)
\right)\right)
-\bar{g}_{m}^{-1}\left(
\bar{h}_{m+1}\left(\bar{f}_{m}(w) \right)\right)\|\\
\leq & \frac{1}{\lambda C} \left(\frac{M}{1-\frac{D^k}{\lambda
C}} \right) \|\bar{f}_{m}(w)\|^k\\
\leq & \frac{D^k}{\lambda C} \left(\frac{M}{1-\frac{D^k}{\lambda
C}} \right) \|w\|^k.
\end{align*}
Since $\|w\| \leq s_m \leq r_m$, we can also apply Lemma \ref{lemma:part4} (and the triangle inequality) to obtain
\begin{align*}
\|\varphi_{m+(n+1),m}(w)
-\bar{h}_{m}(w)\|
\leq & \frac{D^k}{\lambda C} \left(\frac{M}{1-\frac{D^k}{\lambda
C}} \right) \|w\|^k + M \cdot
\|w\|^k\\
\le & \left(\frac{M}{1-\frac{D^k}{\lambda C}}\right)\|w\|^k.
\end{align*}
Combining this with Lemma \ref{lemma:part3} we find that
\begin{align*}
\|\varphi_{m+(n+1),m}^{-1} (w)\|
\leq & \left(\frac{M}{1-\frac{D^k}{\lambda C}}+2\right)
\|w\|.
\end{align*}
\end{proof}

\begin{thm}\label{thm:convergentie}
The sequence $(\Phi_n)$ converges uniformly on each $V_u$.
\end{thm}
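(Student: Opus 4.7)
The plan is to show that $(\Phi_N)$ is uniformly Cauchy on $V_u$. Fix $u$ and take $m$ large enough (depending on $u$) that $\bar{f}_{m,0}(V_u) \subset \mathbb{B}(0, s_m)$, which is possible by the choice of $v(u)$ made just before the statement. For $z \in V_u$ and $N \ge m$, I would first separate the fixed head from the varying tail by writing
\begin{equation*}
\Phi_N(z) = \bar{g}_{m,0}^{-1}\bigl(\varphi_{N,m}(\bar{f}_{m,0}(z))\bigr),
\end{equation*}
so that since $\bar{g}_{m,0}^{-1}$ is a polynomial map of $\mathbb{C}^2$ (and hence Lipschitz on any bounded set), it suffices to prove that $\varphi_{N,m}(w)$ is Cauchy in $N$, uniformly for $w \in \mathbb{B}(0, s_m)$.

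For that I would iterate the tautological identity $\varphi_{N, j}(w') = \bar{g}_j^{-1}\bigl(\varphi_{N, j+1}(\bar{f}_j(w'))\bigr)$. For $w' \in \mathbb{B}(0, s_j)$ the relation $D s_j < s_{j+1}$ ensures $\bar{f}_j(w') \in \mathbb{B}(0, s_{j+1})$, and Lemma~\ref{lemma:afstanden}(ii) then places $\varphi_{N, j+1}(\bar{f}_j(w'))$ inside $\mathbb{B}(0, C_1 D s_j) = \mathbb{B}(0, D r_j) \subset \mathbb{B}(0, r_j)$, where $C_1 = M/(1 - D^k/\lambda C) + 2$ and $s_n = r_n/C_1$. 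Hence Lemma~\ref{lemma:part2} applies and delivers the one-step contraction
\begin{equation*}
\|\varphi_{N+1, j}(w') - \varphi_{N, j}(w')\| \le \tfrac{1}{\lambda C}\, \|\varphi_{N+1, j+1}(\bar{f}_j(w')) - \varphi_{N, j+1}(\bar{f}_j(w'))\|.
\end{equation*}
Iterating this $N-m$ times (from $j=m$ up to $j=N$) and invoking Lemma~\ref{lemma:part4} at the base case, where $\varphi_{N,N} = \bar h_N$ and $\|\bar f_{N,m}(w)\| \le D^{N-m} s_m$, one arrives at
\begin{equation*}
\|\varphi_{N+1, m}(w) - \varphi_{N, m}(w)\| \le M \left(\frac{D^k}{\lambda C}\right)^{N-m} s_m^k.
\end{equation*}
Since $\lambda$ was chosen so that $D^k < \lambda C$, this bound is geometrically summable in $N$ uniformly in $w \in \mathbb{B}(0, s_m)$; composing with $\bar{g}_{m,0}^{-1}$ (Lipschitz on $\mathbb{B}(0, r_m)$ with some finite constant $L_m$) transfers the estimate to a uniform bound $\|\Phi_{N+1}(z) - \Phi_N(z)\| \le L_m M (D^k/\lambda C)^{N-m} s_m^k$ on $V_u$, so $(\Phi_N)$ is uniformly Cauchy and hence converges uniformly on $V_u$.

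The main technical obstacle will be keeping the nested domain conditions aligned throughout the iteration: at each level $j$ both arguments of $\bar{g}_j^{-1}$ must lie in $\mathbb{B}(0, r_j)$ and the pushed-forward point $\bar{f}_j(w')$ must lie in $\mathbb{B}(0, s_{j+1})$. These containments are exactly what the definitions $s_n = r_n/C_1$ and $D s_n < s_{n+1}$ (the latter requiring $k < 11/5$) were engineered to guarantee, so the iteration propagates all the way to the base case without any loss in the contraction factor $1/(\lambda C)$.
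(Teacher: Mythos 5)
Your proposal is correct and follows essentially the same route as the paper: the one-step identity $\varphi_{N,j}=\bar g_j^{-1}\circ\varphi_{N,j+1}\circ\bar f_j$, the domain bookkeeping via $s_n=r_n/C_1$ and $Ds_n<s_{n+1}$ together with Lemma \ref{lemma:afstanden}(ii), the contraction factor $\frac{1}{\lambda C}$ from Lemma \ref{lemma:part2} paired with the $D^{k(N-m)}$ decay from Lemma \ref{lemma:part4} at the base level, and finally continuity of the fixed head $\bar g_{m,0}^{-1}$ on a bounded set. The only cosmetic difference is that you telescope consecutive differences and sum a geometric series, whereas the paper compares each $\varphi_{n,N}$ to the common anchor $\bar h_N\circ\bar f_{N,1}$ via Lemma \ref{lemma:afstanden}(i) and invokes uniform continuity rather than a Lipschitz bound for the head.
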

\begin{proof}
First, fix $\eta>0$. Since the function $\bar{g}_{v(u),1}^{-1}$ is uniformly
continuous on $\bar{\mathbb{B}}(0,1)$, we can find
$\theta$ such that for all $z,w \in
\mathbb{B}(0,1)$:
$$
\|z-w\| < \theta \Rightarrow
\|\bar{g}_{v(u),1}^{-1}(z)-\bar{g}_{v(u),1}^{-1}(w)\|<\frac{\eta}{2}
$$
Now pick $N \in \mathbb{N}$ such that
$$
\frac{M}{1-\frac{D^k}{\lambda C}}\left(\frac{D^k}{\lambda C}
\right)^{N-v(u)} \leq \theta
$$
Fix $z \in V_u$ and $n \geq N$. For any $m \geq v(u)$ we have $\bar{f}_{m,1}(z)\in \mathbb{B}(0,s_m)$. In this setting, Lemma \ref{lemma:afstanden}(i) shows that
\begin{align}
\|\varphi_{n,N} \left(\bar{f}_{N,1}(z)
\right)
-\bar{h}_{N}\left(\bar{f}_{N,1}(z) \right)\| \leq \left(\frac{M}{1-\frac{D^k}{\lambda C}} \right)
D^{k(N-u)}\label{eq:whatever}
\end{align}
And for $m=v(u), v(u)+1,\ldots , v(u)+N$, Lemma \ref{lemma:afstanden}(ii) gives us
\begin{align*}
\|\bar{g}_{N,m}^{-1}\circ
\varphi_{n,N} \left(\bar{f}_{N,1}(z) \right)\|
\leq  \left(\frac{M}{1-\frac{D^k}{\lambda C}}+2 \right)s_m=r_m,
\end{align*}
and
 \begin{align*}
\|\bar{g}_{N,m}^{-1}\circ
 \bar{h}_{N} \left(\bar{f}_{N,1}(z) \right)\|
\leq \left(\frac{M}{1-\frac{D^k}{\lambda C}}+2 \right)s_m=r_m.
 \end{align*}
If we apply $\bar{g}_{v(u)+N-1}^{-1}$ up to
$\bar{g}_{v(u)}^{-1}$ in equation (\ref{eq:whatever}) and use Lemma
\ref{lemma:part2} repeatedly we obtain:
\begin{align*}
\|\bar{g}_{N,v(u)}^{-1}\circ
\varphi_{n,N} \left(\bar{f}_{N,1}(z) \right)-\bar{g}_{N,v(u)}^{-1}\circ
 \bar{h}_{N} \left(\bar{f}_{N,1}(z) \right)\|
&\leq \left(\frac{D^k}{\lambda C}\right)^{N-v(u)}
\left(\frac{M}{1-\frac{D^k}{\lambda C}} \right)\\
& <\theta,
\end{align*}
which means that
$$
\|\varphi_{n,v(u)} \circ
\bar{f}_{v(u),1}(z) -\varphi_{N,v(u)} \circ \bar{f}_{v(u),1}(z) \|<\theta.
$$

Since both $\varphi_{n,v(u)} \circ \bar{f}_{v(u),1}(z)$ and
$\varphi_{N,v(u)} \circ \bar{f}_{v(u),1}(z)$ lie in
 $\mathbb{B}(0,1)$ by Lemma \ref{lemma:afstanden}(ii), the definition of $\theta$ gives:
 \begin{align*}
 \| \Phi_{n}(z)-\Phi_{N}(z)\|
& = \|\bar{g}_{v(u),1}^{-1}\left(\varphi_{n,v(u)} \circ \bar{f}_{v(u),1}(z)
\right)
 -\bar{g}_{v(u),1}^{-1}\left(\varphi_{N,v(u)} \circ \bar{f}_{v(u),1}(z)\right)\|\\
& \leq \eta/2.
\end{align*}

For $m,n \geq N$ and $z\in V_u$ we therefore have
$\|\Phi_{n}(z)-\Phi_{m}(z)\|<\eta$, which proves the uniform
convergence on $V_u$.
\end{proof}

Since $\Omega_{(\bar{f}_n)} = \bigcup_{u} V_u$, Theorem \ref{thm:convergentie} shows that the maps $\Phi_n$ converge uniformly on compact subsets to a map $\Phi \colon \Omega_{(\bar{f}_n)} \rightarrow \mathbb{C}^2$. We will now prove that this limit $\Phi$ maps $\Omega_{(\bar{f}_n)}$ biholomorphically onto $\mathbb C^2$. The maps $\Phi_n$ are compositions of a number of global biholomorphisms, plus a holomorphic map $\bar{h}_n$ which is injective on a small ball whose radius decreases with $n$. We first estimate the size of these radii.

\begin{lemma}\label{lemma:h}
For $n \in \mathbb N$ and $\|z\| \le r_n$ we have
\begin{enumerate}
\item[(i)]{$\|D\bar{h}_n-I\| \leq \frac{1}{2}$}
\item[(ii)]{$\bar{h}_n$ is injective, and}
\item[(iii)]{$\|\bar{h}_n(z)\| \geq \frac{1}{2}\|z\|$.}
\end{enumerate}
\end{lemma}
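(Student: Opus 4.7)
The plan is to reduce everything to part (i), which follows directly from the coefficient bounds and the definition of $r_n$. Write $\bar{h}_n = \mathrm{Id} + Q_n$, where $Q_n$ is the quadratic polynomial with coefficients $\alpha_n^{i,j}, \beta_n^{i,j}$. By the corollary immediately following Lemma \ref{lemma:maxnpj+1}, each of these six coefficients is bounded in modulus by $Z\cdot D^{-2n(k-2+\epsilon)}$. Differentiating $Q_n$ entry-by-entry gives a matrix whose entries are sums of at most three terms of the form (coefficient)$\cdot 2z$ or (coefficient)$\cdot w$, so each entry of $DQ_n(z,w)$ is bounded in modulus by $3Z\cdot D^{-2n(k-2+\epsilon)}\|(z,w)\|$. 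The Frobenius norm therefore satisfies
$$
\|D\bar{h}_n(z) - I\| = \|DQ_n(z)\| \;\le\; 6Z \cdot D^{-2n(k-2+\epsilon)}\,\|z\|.
$$
By the second term in the definition of $r_n$, if $\|z\| \le r_n$ then $\|z\| \le \tfrac{1}{48Z} D^{2n(k-2+\epsilon)}$, which makes the right-hand side at most $\tfrac{1}{8} \le \tfrac12$. This proves (i).

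For (ii) and (iii), I would use the standard contraction argument. Since $\bar{h}_n = \mathrm{Id} + Q_n$ with $Q_n(0)=0$, for any two points $z, w$ in the convex set $\mathbb{B}(0,r_n)$ the fundamental theorem of calculus gives
$$
Q_n(z) - Q_n(w) = \int_0^1 DQ_n(w+t(z-w))\,(z-w)\,dt,
$$
and part (i) bounds the integrand in norm by $\tfrac12\|z-w\|$. Hence
$$
\|\bar{h}_n(z) - \bar{h}_n(w)\| \;\ge\; \|z-w\| - \|Q_n(z)-Q_n(w)\| \;\ge\; \tfrac12 \|z-w\|,
$$
which gives both injectivity on $\mathbb{B}(0,r_n)$ and, specializing to $w = 0$ using $\bar{h}_n(0)=0$, the lower bound $\|\bar{h}_n(z)\| \ge \tfrac12\|z\|$.

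There is no real obstacle here: the work was already done when choosing $r_n$ with the factor $\tfrac{1}{48Z}$ precisely so that the linearization is a contraction of the identity. The only minor bookkeeping issue is to confirm that the bound on the six quadratic coefficients carries over to $\bar{h}_n$ regardless of which train $n$ lies in; this is immediate from the coefficient estimate cited above, which holds uniformly for all $n \in [p_j, p_{j+1}]$ and all $j$.
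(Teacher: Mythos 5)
Your proof is correct and fills in exactly the details that the paper's terse proof leaves implicit: bound $\|DQ_n(z)\|$ via the coefficient estimate $Z\,D^{-2n(k-2+\epsilon)}$ and the $\tfrac{1}{48Z}$ term in $r_n$, then deduce (ii) and (iii) from (i) by the standard mean-value/contraction argument. One small remark: the paper's proof cites the weaker inequality $r_n \le \tfrac{1}{8Z}D^{2n(k-2+\epsilon)}$, which only yields $\|DQ_n\|\le \tfrac12$ if one also exploits $\beta_n^{2,0}=0$ and the sharper Cauchy--Schwarz bound $\sqrt{5}\,Z\,D^{-2n(k-2+\epsilon)}\|z\|$ on the non-vanishing entries; your coarser entrywise estimate combined with the actual $\tfrac{1}{48Z}$ factor in the definition of $r_n$ avoids this subtlety and gives $\tfrac18 \le \tfrac12$ directly.
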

\begin{proof}
All three statements follow from the estimates on the coefficients of $h_n$ found earlier, and the fact that
$$
r_n \le \frac{1}{8Z}D^{2n(k-2+\epsilon)}.
$$
\end{proof}

\begin{corollary}
The map $\Phi$ is a biholomorphism.
\end{corollary}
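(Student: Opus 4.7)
\medskip
\noindent\textbf{Proof plan.}
The plan is to verify, in order, that $\Phi$ is holomorphic, injective, and surjective onto $\mathbb C^2 = \Omega_{(\bar g_n)}$. Holomorphicity is immediate from Theorem \ref{thm:convergentie}, since $\Phi$ is the local-uniform limit of the holomorphic maps $\Phi_n$.

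For injectivity I first observe that $D\Phi(0) = I$. Differentiating the commutative-up-to-degree-$2$ diagram at the origin and using $D\bar h_n(0) = I$ gives $D\bar g_n(0) = D\bar f_n(0)$, hence $D\Phi_n(0) = I$ for every $n$ and $D\Phi(0) = I$ in the limit. Next, for each $u$ and each $n \ge v(u)$, the map $\Phi_n|_{V_u}$ is injective: the outer factors $\bar f_{n,0}$ and $\bar g_{n,0}^{-1}$ are global automorphisms of $\mathbb C^2$, $\bar h_n$ is injective on $\mathbb B(0,r_n)$ by Lemma \ref{lemma:h}(ii), and by the choice of $v(u)$ we have $\bar f_{n,0}(V_u) \subset \mathbb B(0, s_n) \subset \mathbb B(0, r_n)$. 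Each $V_u$ is open and connected, so applying the several-variable Hurwitz theorem to the sequence $\Phi_n|_{V_u}$ of injective holomorphic maps yields the dichotomy: either $\Phi|_{V_u}$ is injective, or $\det D\Phi$ vanishes identically on $V_u$. The second alternative is excluded by $\det D\Phi(0) = 1$. Since the $V_u$ nest and exhaust $\Omega_{(\bar f_n)}$, $\Phi$ is globally injective; an injective holomorphic map in several variables has nowhere-vanishing Jacobian, so $\Phi$ is a biholomorphism onto its image.

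The hard part will be surjectivity. I would follow the strategy recalled for the autonomous case in Section 3.1 and construct, for each $R > 0$, an integer $N$ and a compact set $K \subset \Omega_{(\bar f_n)}$ with $\Phi_n(K) \supset \bar{\mathbb B}(0, R)$ for all $n \ge N$. Granted this, for any $w \in \bar{\mathbb B}(0, R)$ one extracts $z_n \in K$ with $\Phi_n(z_n) = w$, passes to a convergent subsequence $z_n \to z \in K$ by compactness, and concludes $\Phi(z) = w$ from uniform convergence of $\Phi_n \to \Phi$ on $K$. The preimages themselves are easy to produce: since $\Omega_{(\bar g_n)} = \mathbb C^2$ by Lemma \ref{lemma:everything}, $\bar g_{n,0}$ tends to $0$ uniformly on $\bar{\mathbb B}(0, R)$, and one picks $N$ so that $\bar g_{n,0}(\bar{\mathbb B}(0, R)) \subset \mathbb B(0, r_n/4)$ for $n \ge N$. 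Lemma \ref{lemma:h}(i) combined with a quantitative inverse function argument yields that $\bar h_n$ restricts to a biholomorphism of $\mathbb B(0, r_n/2)$ onto a set containing $\mathbb B(0, r_n/4)$, so
\[
z_n(w) := \bar f_{n,0}^{-1}\bigl(\bar h_n^{-1}(\bar g_{n,0}(w))\bigr)
\]
is a well-defined point of $\mathbb C^2$ with $\Phi_n(z_n(w)) = w$.

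The main obstacle is then to confine the whole family $\{z_n(w) : n \ge N,\; w \in \bar{\mathbb B}(0, R)\}$ to a fixed compact subset of $\Omega_{(\bar f_n)}$. My plan is to fix an auxiliary $m$ independent of $n$ and $w$ and write $\bar f_{m,0}(z_n(w)) = \bar f_{n,m}^{-1}(\bar h_n^{-1}(\bar g_{n,0}(w)))$, then balance the Lipschitz expansion $\|\bar f_{n,m}^{-1}(y)\| \le C^{-(n-m)}\|y\|$ coming from uniform attraction against the $n$-wise decay of $\bar g_{n,0}(w)$ that is already implicit in the proof of Lemma \ref{lemma:everything}. This is precisely where the standing numerical assumption $D^{11/5} < C$ will enter a last, decisive time, just as it did in Lemmas \ref{lemma:part2}--\ref{lemma:part4} to force convergence of the $\Phi_n$. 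The outcome should be that $\bar f_{m,0}(z_n(w))$ remains in some fixed ball $\bar{\mathbb B}(0, \rho)$ with $\rho < 1$, after which $K := \bar f_{m,0}^{-1}(\bar{\mathbb B}(0, \rho))$ is compact, is contained in $\Omega_{(\bar f_n)}$ (since orbits starting in $\bar{\mathbb B}(0, \rho)$ stay in $\mathbb B$ and contract to $0$), and contains every $z_n(w)$, completing the proof.
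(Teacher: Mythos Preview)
Your injectivity argument is essentially the paper's: show each $\Phi_n$ is injective on $V_u$ for $n\ge v(u)$ via Lemma~\ref{lemma:h}(ii), then invoke Hurwitz together with $D\Phi_n(0)=I$. That is all the paper proves in this Corollary; surjectivity onto $\mathbb C^2$ is established \emph{afterwards} in two separate lemmas, so you have folded an extra result into the statement.

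For that extra surjectivity step your plan diverges from the paper and, as written, does not go through. You propose to bound $\bar f_{m,0}(z_n(w))=\bar f_{n,m}^{-1}\bigl(\bar h_n^{-1}(\bar g_{n,0}(w))\bigr)$ by balancing the expansion $\|\bar f_{n,m}^{-1}(y)\|\le C^{-(n-m)}\|y\|$ against the decay of $\bar g_{n,0}(w)$. But once the $\bar g$-orbit is small its decay rate is governed by the linear part, hence is of order $D^n$ (cf.\ the proof of Lemma~\ref{lemma:everything}: $\|g_{n,1}(z)\|\le D^{\,n-p_j}D^{2^{j-1}/(k-x)}+\cdots$, and the train length $p_{j+1}-p_j$ is not bounded in terms of $2^j$). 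Since $C<D$, the ratio $\|\bar g_{n,0}(w)\|/C^{\,n}$ is typically of order $(D/C)^n\to\infty$, so for generic $C,D$ with $D^{11/5}<C$ there is no fixed $m$ making $\bar f_{m,0}(z_n(w))$ uniformly bounded by this crude Lipschitz count. The hypothesis $D^{11/5}<C$ does not rescue this balance; it was already fully spent in making $s_n/D^n$ increase so that Lemma~\ref{lemma:afstanden} applies.

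The paper sidesteps this entirely. Rather than pulling back from level $n$, it works at a \emph{fixed} intermediate level $m=v(t)$ and uses the already-proved uniform closeness $\|\varphi_{m+n,m}(w)-\bar h_m(w)\|\le \mathrm{const}\cdot\|w\|^k$ from Lemma~\ref{lemma:afstanden} together with $\|\bar h_m(z)\|\ge\tfrac12\|z\|$ to conclude $\varphi_{m+n,m}(\mathbb B(0,s_m))\supset\mathbb B(0,\tfrac14 s_m)$ for \emph{all} $n$ (Lemma~\ref{lemma:h2}). Thus the single compact set $K=\bar f_{v(t),1}^{-1}\bigl(\bar{\mathbb B}(0,s_{v(t)})\bigr)$ already satisfies $w\in\Phi_n(K)$ for every large $n$, and no balancing is needed. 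If you want to salvage your route, the missing ingredient is precisely this: replace the raw bound $C^{-(n-m)}$ by the much sharper control on $\bar g_{n,m}^{-1}\circ\bar h_n\circ\bar f_{n,m}$ coming from Lemma~\ref{lemma:afstanden}, which is effectively what the paper does.
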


\begin{proof}
Let $u \in \mathbb{N}$ and $n \ge v(u)$. Then for $z \in V_u$ we have $\|\bar{f}_{n,1}(z)\|\leq r_n$. By Lemma \ref{lemma:h}(ii) and the fact that all $\bar{f}_m$ and $\bar{g}_m$ are biholomorphisms, the map $\Phi_n$ must be injective on $V_u$. Therefore $\Phi|_{V_u}$ is a uniform limit of biholomorphisms, and we know that $D\Phi_n(0)=I$ for all $n$. By Hurwitz' theorem we can conclude that $\Phi|_{V_u}$ is also a biholomorphism. The statement follows since $\Omega_{(\bar{f}_n)}$ is the increasing union of the sets $V_u$.
\end{proof}

The final ingredient in the proof of Theorem \ref{thm:general} is to show that $\Phi: \Omega_{(\bar{f}_n)} \rightarrow \mathbb C^2$ is surjective.

\begin{lemma}\label{lemma:h2}
For $m , n \in \mathbb{N}$, we have
$$
\varphi_{m+n,m}\left( \mathbb{B}(0,s_m) \right) \supseteq \mathbb{B}\left( 0, \frac{1}{4}s_m \right).
$$
\end{lemma}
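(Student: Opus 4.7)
The plan is to apply the multi-dimensional Rouch\'e theorem, comparing $\varphi_{m+n,m}$ with $\bar{h}_m$ on the ball $\mathbb{B}(0, s_m)$. The two maps agree to second order by construction of the $g_n$ and $h_n$, so they are close on $\mathbb{B}(0,s_m)$; and $\bar{h}_m$ is, thanks to Lemma \ref{lemma:h}, nearly the identity on this ball. This should be enough to force $\varphi_{m+n,m}$ to cover $\mathbb{B}(0, s_m/4)$.

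First I would observe that Lemma \ref{lemma:h} implies, by a standard open-mapping / degree argument, that $\bar h_m$ is a biholomorphism from $\mathbb{B}(0, s_m)$ onto an open set containing $\mathbb{B}(0, s_m/2)$. Indeed $\|D\bar{h}_m - I\| \le \tfrac{1}{2}$ gives local injectivity, Lemma \ref{lemma:h}(ii) gives global injectivity on $\mathbb{B}(0, r_m) \supseteq \mathbb{B}(0, s_m)$, and Lemma \ref{lemma:h}(iii) yields $\|\bar h_m(w)\| \ge s_m/2$ on $\partial \mathbb{B}(0, s_m)$, which by the argument principle forces every $y \in \mathbb{B}(0, s_m/2)$ to be attained exactly once.

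Next I would use Lemma \ref{lemma:afstanden}(i) to control the deviation:
$$
\|\varphi_{m+n,m}(w) - \bar h_m(w)\| \le \frac{M}{1 - D^k/\lambda C}\,\|w\|^k =: C_1 \|w\|^k
$$
for $w \in \mathbb{B}(0, s_m)$. The crucial smallness condition is $C_1 s_m^{k-1} \le 1/4$. Granting this, for any $y \in \mathbb{B}(0, s_m/4)$ and any $w \in \partial \mathbb{B}(0, s_m)$ we obtain the strict inequality
$$
\|\bar h_m(w) - y\| > \tfrac{s_m}{2} - \tfrac{s_m}{4} = \tfrac{s_m}{4} \ge \|\varphi_{m+n,m}(w) - \bar h_m(w)\|.
$$
By the multi-dimensional Rouch\'e theorem (equivalently, by degree invariance along the straight-line homotopy between $\bar h_m - y$ and $\varphi_{m+n,m} - y$, which never vanishes on $\partial \mathbb{B}(0, s_m)$), the map $\varphi_{m+n,m} - y$ has the same number of zeros in $\mathbb{B}(0, s_m)$ as $\bar h_m - y$, namely exactly one. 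Hence $y \in \varphi_{m+n,m}(\mathbb{B}(0, s_m))$, as required.

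The main obstacle is the bookkeeping behind the smallness condition $C_1 s_m^{k-1} \le 1/4$. Since $s_m \le r_m \le (48Z)^{-1}$ while $C_1$ depends only on the fixed data $C, D, k, \lambda$ and polynomially on $Z$ (through $M$, cf.\ Lemma \ref{lemma:part4}), this reduces to choosing $Z$ sufficiently large; the constant $Z$ from Step 1 was already taken ``sufficiently large,'' so any further enlargement is harmless. With that technicality settled, the Rouch\'e step itself is entirely standard.
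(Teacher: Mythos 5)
Your Rouch\'e argument is sound and gives the same conclusion as the paper; the paper's own proof is slightly more direct. It applies the open-mapping/degree argument straight to $\varphi_{m+n,m}$ rather than first to $\bar h_m$: from Lemmas \ref{lemma:h}(iii) and \ref{lemma:afstanden}(i) one gets $\|\varphi_{m+n,m}(z)\|\ge \tfrac12 s_m - C_1 s_m^k \ge \tfrac14 s_m$ on $\partial\mathbb{B}(0,s_m)$, then one checks that $\varphi_{m+n,m}$ has nonvanishing Jacobian on $\mathbb{B}(0,s_m)$ (Lemma \ref{lemma:h}(i) plus biholomorphy of all $\bar f_s,\bar g_s$) and uses $\varphi_{m+n,m}(0)=0$. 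Your version instead establishes that $\bar h_m$ covers $\mathbb{B}(0,s_m/2)$ injectively and then transfers the covering across the perturbation via Rouch\'e; the same two lemmas are invoked and the same smallness inequality $C_1 s_m^{k-1}\le \tfrac14$ is the crux, so the two routes are essentially parallel.

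The one genuine flaw is in your justification of the smallness condition $C_1 s_m^{k-1}\le \tfrac14$. ``Choose $Z$ sufficiently large'' is not the right mechanism: $M$, and hence $C_1=M/(1-D^k/\lambda C)$, \emph{grows} polynomially in $Z$ (through Lemma \ref{lemma:part4}), so enlarging $Z$ does not obviously make $C_1 s_m^{k-1}$ shrink if you only control $s_m$ by $s_m\le r_m\le(48Z)^{-1}$. In fact no retroactive enlargement of $Z$ is needed. The inequality is automatic from the \emph{definition} of $s_m$, which already carries a factor $(C_1+2)^{-1}$: since in the relevant regime one has $k\ge 2$ (otherwise $D^2<C$ and the result is already known), and $r_m\le 1$,
\[
C_1\,s_m^{k-1}=\frac{C_1\,r_m^{k-1}}{(C_1+2)^{k-1}}\le \frac{C_1}{C_1+2}\,r_m^{k-1}< r_m\le \frac{1}{48Z}\le\frac{1}{48}<\frac14.
\]
This is exactly the ``wiggle room'' the authors built into the passage from $r_n$ to $s_n$. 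With this replaced, your Rouch\'e argument closes cleanly.
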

\begin{proof}
By Lemmas \ref{lemma:h}(iii) and \ref{lemma:afstanden}(i) it follows that
$$
\varphi_{m+n,m}(z)\ge \frac{1}{2}s_m - \left(\frac{M}{1-\frac{D^k}{\lambda C}} \right)
s_m^k \ge \frac{1}{4}s_m,
$$
for all  $z \in \partial \mathbb{B}(0,s_m)$. Now note that
$$
\bar{f}_{m+n,m}\left( \mathbb{B}(0,s_m) \right) \subseteq \mathbb{B}\left(0,s_m D^n \right) \subseteq \mathbb{B}\left(0,s_{m+n}\right).
$$
By Lemma \ref{lemma:h}(i), and the fact that all $\bar{f}_s$ and $\bar{g}_s$ are biholomorphisms, the map $\varphi_{m+n,m}$ has a nonzero Jacobian on $\mathbb{B}(0,s_m)$, and therefore is an open mapping. As $\varphi_{m+n,m}(0)=0$, the lemma follows.
\end{proof}

\begin{lemma}
The biholomorphism $\Phi: \Omega_{(\bar{f}_n)} \rightarrow \mathbb C^2$ is surjective.
\end{lemma}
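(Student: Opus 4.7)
The plan is to establish surjectivity by producing, for each $q \in \mathbb{C}^2$, a preimage under $\Phi$. I would use Lemma \ref{lemma:h2} to obtain preimages of $q$ under the approximating maps $\Phi_n$, and then extract a subsequential limit.

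First, for fixed $q$, the goal is to find an index $m$ with $\|\bar{g}_{m,0}(q)\| < s_m/4$, which is what is needed to invoke Lemma \ref{lemma:h2}. Lemma \ref{lemma:everything} yields $\bar{g}_{m,0}(q) \to 0$, but the relevant statement here is the stronger one that this decay outpaces $s_m$. Inspecting the proof of Lemma \ref{lemma:everything}: once $R_j := \|\bar{g}_{p_j,0}(q)\| \le D^{2^{j-1}/(k-x)}$ (the stable regime, Case (a)), iterating its recursion yields $R_j \le D^{2^j/(k-x) - O(j)}$ for large $j$. Comparing with $s_m \gtrsim D^{4m(k-2+\epsilon)/(3-k)}$ at $m = p_j$ and using $p_j \ge (2^j-1)/(k-x)$ from Lemma \ref{lemma:trein}(iii), the hypothesis $k < 11/5$ together with the choice $x = 3/2$ and $\epsilon < (11-5k)/4$ yields the required inequality for $j$ sufficiently large.

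With such an $m$ fixed, Lemma \ref{lemma:h2} provides $w_n \in \mathbb{B}(0, s_m)$ with $\varphi_{n,m}(w_n) = \bar{g}_{m,0}(q)$ for every $n \ge m$. Setting $p_n := \bar{f}_{m,0}^{-1}(w_n)$, a direct calculation using $\bar{f}_{n,0} = \bar{f}_{n,m} \circ \bar{f}_{m,0}$ and $\bar{g}_{n,0} = \bar{g}_{n,m} \circ \bar{g}_{m,0}$ gives $\Phi_n(p_n) = q$. Pass to a subsequence $w_{n_k} \to w_\infty \in \overline{\mathbb{B}(0, s_m)}$ and set $p := \bar{f}_{m,0}^{-1}(w_\infty)$; since $s_m < 1$, the uniform contraction $\|\bar{f}_n(z)\| \le D\|z\|$ on $\mathbb{B}$ forces $\bar{f}_{n,m}(w_\infty) \to 0$, and hence $p \in \Omega_{(\bar{f}_n)}$. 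By Theorem \ref{thm:convergentie}, $\Phi_n \to \Phi$ uniformly on a compact neighborhood $K$ of $p$ containing $p_{n_k}$ for $k$ large, so
\[
\|\Phi_{n_k}(p_{n_k}) - \Phi(p)\| \le \|\Phi_{n_k} - \Phi\|_K + \|\Phi(p_{n_k}) - \Phi(p)\| \longrightarrow 0,
\]
and since $\Phi_{n_k}(p_{n_k}) = q$ throughout, $\Phi(p) = q$, which completes the argument.

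The main obstacle is the quantitative comparison in the first step: exhibiting an $m$ at which the doubly exponentially decaying iterate $\bar{g}_{m,0}(q)$ is already smaller than $s_m/4$. This is where the full force of the constraint $k < 11/5$ together with $x = 3/2$ enters, matching exactly the bottleneck that motivated those parameter choices at the beginning of the section.
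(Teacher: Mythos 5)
Your overall architecture is the same as the paper's: get the $\bar g$-orbit of the target point $q$ below the threshold $\tfrac14 s_m$ at some time $m$, invoke Lemma \ref{lemma:h2} to produce preimages under $\varphi_{n,m}$, pull back by $\bar f_{m,0}^{-1}$, and pass to the limit using Theorem \ref{thm:convergentie}. Your steps 2--5 are essentially the paper's argument (the paper takes the limit via compactness of $\bar f_{v(t),1}^{-1}(\bar{\mathbb B}(0,s_{v(t)}))$ rather than a subsequence of preimages, which is equivalent). The problem is your step 1.

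The quantitative comparison you propose does not work. The proof of Lemma \ref{lemma:everything} bounds the orbit at a time $n$ in train $j$ by roughly $D^{2^{j-1}/(k-x)}$ plus a term of order $D^{2^{j+1}(-k+5/2-\epsilon)/(k-x)}$ --- in either case a quantity controlled only by $2^j$, not by $n$ --- whereas $s_n$ decays like $D^{4n(k-2+\epsilon)/(3-k)}$, i.e.\ exponentially in $n$ itself. To conclude $\|\bar g_{n,1}(q)\| < \tfrac14 s_n$ you would therefore need an \emph{upper} bound on $n$ (equivalently on $p_j$) in terms of $2^j$; the bound $p_j \ge 2^j/(k-x)$ from Lemma \ref{lemma:trein}(iii) that you invoke is a \emph{lower} bound and makes the inequality harder, not easier. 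No upper bound is available: the construction in Definition \ref{def:trains} places no control on the length of the wagons $[q_j,p_{j+1})$, which is the whole point of the trains being adaptive, so $p_j$ may be vastly larger than $2^j$ and $s_{p_j}$ vastly smaller than your bound on $R_j$. The paper avoids this by using Lemma \ref{lemma:everything} only qualitatively, to find a $t$ with $\bar g_{t,1}(w) \in \mathbb B(0,\tfrac14)$, and then exploiting the \emph{per-step} contraction of the maps $\bar g_n$ for $v(t)-t$ further steps: since $\frac{4(k-2+\epsilon)}{3-k} < 1$, the radius $s_n$ decays strictly slower than $D^{n}$, so the geometric decay $\tfrac14 D^{n-t}$ overtakes $s_n$ at the finite time $n=v(t)$ defined in \eqref{eq:v(u)}. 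That mechanism --- the function $v(\cdot)$ --- is exactly the ingredient missing from your argument, and you should route your step 1 through it.
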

\begin{proof}
Let $w \in \mathbb{C}^2$. We will show that $w$ lies in the image of $\Phi$.

Since $\Omega_{(\bar{g}_n)}=\mathbb{C}^2$, we can find $t \in \mathbb{N}$ such that $\bar{g}_{t,1}(w) \in \mathbb{B}(0,\frac{1}{4})$. Then we have
$$
\|\bar{g}_{v(t),1}(w)\|=\|\bar{g}_{v(t),t}\left(\bar{g}_{t,1}\right)\| \leq \frac{1}{4}D^{v(t)-t} \leq \frac{1}{4}s_{v(t)}.
$$
By Lemma \ref{lemma:h2} we now find that
$$
\bar{g}_{v(t),1}(w) \in \varphi_{v(t)+n,v(t)}\left( \mathbb{B}(0,s_v(t)) \right),
$$
for any $n \in \mathbb{N}$. This implies that
\begin{align*}
w & \in \bar{g}_{v(t),1}^{-1} \circ \varphi_{v(t)+n,v(t)}\left( \mathbb{B}(0,s_v(t)) \right)=\Phi_{v(t)+n}\left( \bar{f}_{v(t),1}^{-1}\left( \mathbb{B}(0,s_v(t))\right) \right)\\
& \subseteq \Phi_{v(t)+n}\left( \bar{f}_{v(t),1}^{-1}\left( \bar{\mathbb{B}}(0,s_v(t))\right) \right),
\end{align*}
for any $n \in \mathbb{N}$. By the compactness of $\bar{f}_{v(t),1}^{-1}\left( \bar{\mathbb{B}}(0,s_v(t))\right)$ and the uniform convergence of $(\Phi_n)$ on compact sets we have that
$$
w \in \Phi\left(\bar{f}_{v(t),1}^{-1}\left( \bar{\mathbb{B}}(0,s_v(t))\right)\right).
$$
\end{proof}

With this lemma we have proved Theorem \ref{thm:general}.

\end{document}